\documentclass[12pt,reqno]{amsart}
\usepackage{hyperref}
\usepackage{amsmath,amssymb,amscd,amsthm,amscd,amsthm,amsfonts,mathrsfs,array}
\usepackage{graphicx,color}
\usepackage[all]{xy}
\usepackage{tikz}
\usepackage{multirow}
\usepackage{here, latexsym, bm, ascmac, mathtools, multicol, tcolorbox, subfig}
\usetikzlibrary{intersections, calc, arrows.meta}
\usetikzlibrary{decorations, decorations.markings}
\usetikzlibrary{intersections,calc,arrows.meta}

\newtheorem{thm}{Theorem}[section]
\newtheorem{lem}{Lemma}[section]
\newtheorem{prop}{Proposition}[section]
\newtheorem{cor}{Corollary}[section]

\theoremstyle{definition}

\newtheorem{defn}{Definition}[section]
\newtheorem{rem}{Remark}[section]

\DeclareMathOperator{\grad}{grad}

\begin{document}

\title[More on Holomorphic Convex Quadrilaterals]{More on explicit Correspondence between Gradient Trees in $\mathbb{R}$ and Holomorphic Convex Quadrilaterals in $T^{*}\mathbb{R}$}
\author{Hidemasa SUZUKI}
\address{Department of Mathematics and Informatics, Graduate School of Science and Engineering, Chiba University,
Yayoicho 1-33, Inage, Chiba, 263-8522 Japan.}
\email{hsuzuki@g.math.s.chiba-u.ac.jp}
\date{}

\begin{abstract}
For given smooth functions $(f_1,\dots,f_n)$ on $M$, Fukaya and Oh showed that the moduli space of pseudoholomorphic disks in $T^*M$ which are bounded by Lagrangian sections $\{L_i^\epsilon=\operatorname{graph}(\epsilon df_i)\}$ is diffeomorphic to the moduli space of gradient trees in $M$ which consist of gradient curves of $\{f_i-f_j\}$. When the image of the pseudoholomorphic disk $w_\epsilon$ is a polygon in $\mathbb{C}\simeq T^*\mathbb{R}$, we can describe $w_\epsilon$ by a Schwarz-Christoffel map. In \cite{S25}, we proved that pseudoholomorphic disks $w_\epsilon$ converge to the gradient tree in the limit $\epsilon\to+0$ when the image of $w_\epsilon$ is a generic convex quadrilateral. In this paper, we show such a convergence for any convex quadrilaterals by studying the non-generic case.
\end{abstract} 

\maketitle

\setcounter{tocdepth}{3}
\tableofcontents

\section{Introduction.}
This paper is a continuation of \cite{S25}, where we studied the correspondence between pseudoholomorphic disks in $T^*\mathbb{R}$ and gradient trees in $\mathbb{R}$ when the image of the pseudoholomorphic disk is a generic convex quadrilateral bounded by affine Lagrangian sections of $T^*\mathbb{R}$. Our goal is to complete this study for any convex quadrilateral.

Fukaya and Oh proved that the moduli space of pseudoholomorphic disks $\mathcal{M}_{J}(T^{*}M;\vec{L^{\epsilon}},\vec{x^{\epsilon}})$ bounded by Lagrangian sections $\vec{L^\epsilon}=(L_{1}^{\epsilon},L_{2}^{\epsilon},\dots,L_{k}^{\epsilon})$ of the cotangent bundle $T^{*}M$ is diffeomorphic to the moduli space of gradient trees $\mathcal{M}_{g}(M;\vec{f},\vec{p})$ constructed by functions $\vec{f}=(f_{1},f_{2},\dots,f_{k})$ on the Riemannian manifold $M$ for sufficiently small $\epsilon>0$ (\cite{FO97}). Here, $L_{i}^{\epsilon}$ is defined by $L_{i}^{\epsilon}\coloneqq\operatorname{graph}(\epsilon df_{i})$ for $i=1,2,\dots,k$. Pseudoholomorphic disks are pseudoholomorphic maps from the unit disk $D^{2}$ to $T^{*}M$. Fukaya and Oh constructed them approximately in $T^{*}M$ first, and show the existence of an exact solution in a neighborhood of the approximate one. Since we assume $M=\mathbb{R}$ in this paper, pseudoholomorphic disks are holomorphic maps from $D^{2}$. Hereafter, we call elements of $\mathcal{M}_{J}(T^{*}\mathbb{R};\vec{L^{\epsilon}},\vec{x^{\epsilon}})$ holomorphic disks. We consider holomorphic disks as maps from the closure of the upper half plane $\mathbb{H}$ instead of $D^{2}$ to make later analysis easier in this paper. When Lagrangian sections of $T^{*}\mathbb{R}\simeq\mathbb{C}$ are affine, holomorphic disks can be described by Schwarz-Christoffel maps. Here, a Schwarz-Christoffel map is a conformal map from $\mathbb{H}$ to a polygonal domain in $\mathbb{C}$. On the other side, gradient trees are continuous maps from trees to $M$, and they map each edges to gradient curves of $f_{i}-f_{j}\,(i\neq j)$. Since each Lagrangian section $L_{i}^{\epsilon}$ of $T^{*}\mathbb{R}$ is affine, each function $f_{i}$ is described by $f_{i}(x)=a_{i}x^{2}+b_{i}x+c_{i}\,(a_{i},b_{i},c_{i}\in\mathbb{R})$ for $i=1,2,\dots,k$.
In this paper, we describe holomorphic disks and gradient trees explicitly, and show the correspondence between gradient trees and pseudoholomorphic disks in the case $M=\mathbb{R}$ and $k=4$. In particular, we prove that the holomorphic disk $w_{\epsilon}$ we consider below converges to the gradient tree.

In the case $M=\mathbb{R},k=4$, we first set $f_{1},f_{2},f_{3},f_{4}$ such that there exists a convex quadrilateral $x_{1}^{\epsilon}x_{2}^{\epsilon}x_{3}^{\epsilon}x_{4}^{\epsilon}$ which has vertices $x_{1}^{\epsilon},x_{2}^{\epsilon},x_{3}^{\epsilon},x_{4}^{\epsilon}$ in counterclockwise order. In this situation, we checked that there exists the gradient tree uniquely in \cite{S25}. Let $w_{\epsilon}$ be the Schwarz-Christoffel map from the upper half plane with four marked points $z_{1},z_{2},z_{3},z_{4,\epsilon}$ to the convex quadrilateral $x_{1}^{\epsilon}x_{2}^{\epsilon}x_{3}^{\epsilon}x_{4}^{\epsilon}$ such that $w_{\epsilon}(z_{i})=x_{i}^{\epsilon}$ for $i=1,2,3$. Here, we fix $z_{1}=1,z_{2}=\infty,z_{3}=0\in\mathbb{R}$, and $z_{4,\epsilon}\in (0,1)$ moves as $\epsilon$ varies.

In \cite{S25}, we studied the case the convex quadrilateral $x_1^\epsilon x_2^\epsilon x_3^\epsilon x_4^\epsilon$ is generic in the sense that $L_{i}\cap L_{j}\neq \emptyset,L_{i}\neq L_{j}\,(i\neq j)$ and $p_{1}\neq p_{3},p_{2}\neq p_{4}$. In this case, the holomorphic disk is described by Appell's hypergeometric series $F_{1}$ in the neighborhoods of $0,z_{4,\epsilon},1,\infty$. We also found that the holomorphic disk $w_\epsilon$ is described by Horn's hypergeometric series $G_{2}$ in the domain $\{z\in\overline{\mathbb{H}}\mid z_{4,\epsilon}<\left\lvert z\right\rvert<1\}$. We used the formula of analytic continuation for $F_{1}$ to proof this. We then divided the upper half plane appropriately by studying the behavior of $z_{4,\epsilon}$. We used conformal moduli which is a conformal invariant of quadrilaterals and the ratio $\left\lvert x_{1}^{\epsilon}-x_{2}^{\epsilon}\right\rvert/\left\lvert x_{3}^{\epsilon}-x_{4}^{\epsilon}\right\rvert$ or $\left\lvert x_{2}^{\epsilon}-x_{3}^{\epsilon}\right\rvert/\left\lvert x_{4}^{\epsilon}-x_{1}^{\epsilon}\right\rvert$ in the convex quadrilateral $x_{1}^{\epsilon}x_{2}^{\epsilon}x_{3}^{\epsilon}x_{4}^{\epsilon}$. We used the inequality which is called Rengel's inequality to evaluate the conformal modulus. This inequality is described by the area of the quadrilateral and the infimum length of Jordan arc between the opposite sides of the quadrilateral. On the other hand, we calculated the ratio of opposite side lengths in two ways. We first used the Euclidean metric of $\mathbb{R}^2$, and next used the integral representation of a Schwarz-Christoffel map. By using the latter way, the ratio is described by gamma functions and hypergeometric functions. 

In this paper, we study the remaining non-generic cases. First, in the case $p_1=p_3$ or $p_2=p_4$, we can not calculate the limit value of the conformal modulus enough by using the same method as in \cite{S25}. For example, when the image of the pseudoholomorphic disk is a parallelogram, the upper bound of the conformal modulus obtained from Rengel's inequality diverges to positive infinity, and the lower bound converges to zero as $\epsilon\to+0$. In this paper, we prove the existence of the limit value by applying some lemmas which state the monotonicity of the conformal modulus. Then the limit value is determined by using the equation which is derived from two different ways of calculating the ratio of adjacent sides of the parallelogram. Consequently, we obtain that the limit of the conformal modulus is $1$, which implies that $z_{4,\epsilon}$ converges to $1/2$. When the image of the pseudoholomorphic disk is not a parallelogram, we use the monotonicity lemma to bound its conformal modulus from above and below by those of parallelograms. By the squeeze theorem, we again obtain that the limit of the conformal modulus is $1$, which leads to the convergence of $z_{4,\epsilon}$ to $1/2$. From the above discussion, we need to modify the method of dividing the upper half plane in \cite{S25}. We will give the details of the method in subsection 3.1.

Second, in the case where a convex quadrilateral has a pair of parallel opposite sides, there are two main problems. The first problem is that the connection formula for $F_1$ used in \cite{S25} can not applied directly in some cases. This is because the quotient of gamma functions in the connection formula may have a numerator whose argument vanishes regardless of $\epsilon>0$. In this paper, we derive the connection formula for $F_1$ by using the well-known connection formula for Gauss's hypergeometric function $_2F_1$. Then the derived formula involves terms which contain logarithms and the digamma function. We will give the detail of this formula in subsection 3.2. The second problem is that the limit of $z_{4,\epsilon}$ cannot be determined by the method in \cite{S25}. In \cite{S25}, we calculated the ratio of the lengths of opposite sides in two ways and derived an equation. When we use the integral representation of a Schwarz-Christoffel mapping, a real power of $z_{4,\epsilon}$ appears. By solving the equation for this power and taking the limit, we obtained the behavior of $\log z_{4,\epsilon}$ or $\log(1-z_{4,\epsilon})$. However, in non-generic case, the exponent may become zero regardless of $\epsilon$. In this paper, we compute the behavior of $\log z_{4,\epsilon}$ or $\log(1-z_{4,\epsilon})$ by deriving the connection formula for $F_1$ in two different ways and comparing the coefficients to modify the equation in \cite{S25}. We will give the details in subsection 3.3. We thus complete any non-generic cases.

The outline of this paper is the following. In Section 2, we first recall definitions of gradient trees and pseudoholomorphic disks, while these definitions are also written in our previous paper \cite{S25}. This is because we do not show the explicit forms of gradient trees in this paper. In order to describe pseudoholomorphic disks explicitly, we next explain the notion of a Schwarz-Christoffel map. We next prepare hypergeometric functions for expanding a Schwarz-Christoffel map in power series and describing the ratio of sides of quadrilaterals. Schwarz-Christoffel maps are written by an integration of power functions. We also prepare one of the connection formulas of hypergeometric functions for describing pseudoholomorphic disks in the regions which are little far from preimage of vertices of polygons. We next recall the notion of conformal moduli of quadrilaterals which are conformal quantities for the unit disk (or the upper half plane) with marked points. We also discuss an inequation for conformal moduli which we use to calculate the limit value of $z_{4,\epsilon}$. At the end of this section, we recall some results for gradient trees and pseudoholomorphic disks which are proved in \cite{S25}. In Section 3, we first explain how to divide the upper half plane into some regions, and give conformal transformation between stripes and these regions. Thereafter, we give the main theorems in the case $M=\mathbb{R},k=4$ and the quadrilaterals are not generic. In order to show these main theorems, we first study power series representation of pseudoholomorphic disks by using integral representations and connection formulas for hypergeometric functions. Then we study the limit value of $z_{4,\epsilon}$ and the principal part of $z_{4,\epsilon}$ at $\epsilon\to+0$ in the case the convex quadrilateral is not generic to know how to divide the upper half plane into several regions.
\par
\noindent
{\bf Acknowledgments.}
The author is grateful to the advisor, Hiroshige Kajiura, for sharing his insights and for valuable advice. This work was supported by JST SPRING, Grant Number JPMJSP2109.

\section{Preliminaries.}
\subsection{Gradient trees and pseudoholomorphic disks.}
In this subsection, we review two moduli spaces of our concern; one is of gradient trees and the other is of pseudoholomorphic disk (see \cite{FO97}). The moduli space of gradient trees is defined by the moduli space $Gr_{k}$ of metric ribbon trees.
\begin{defn}[\cite{FO97}]
  A \textit{ribbon tree} is a pair $(T, i)$ of a tree $T$ and an embedding $i:T\rightarrow D^{2}\subset \mathbb{C}$ which satisfies the following:\\
  (1) No vertex of $T$ has 2-edges.\\
  (2) If $v\in T$ is a vertex with one edge, then $i(v)\in \partial D^{2}$.\\
  (3) $i(T)\cap\partial D^{2}$ consists of vertices with one edge.
\end{defn}
We identify two pairs $(T, i)$ and $(T', i')$ if $T$ and $T'$ are isometric and $i$ and $i'$ are isotopic. Let $G_{k}$ be the set of all triples $(T, i, v_{1})$, where $(T, i)$ is as above, $v_{1}\in T\cap \partial D^{2}$ and $T\cap \partial D^{2}$ consists of $k$ points. We remark that choosing $v_{1}\in T\cap\partial D^{2}$ is equivalent to choosing an order of $T\cap \partial D^{2}$ which is compatible with the cyclic order of $\partial D^{2}$. 
\begin{defn}[\cite{FO97}]
  We call a vertex an \textit{internal vertex} if it has more than two edges attached to it and call it an \textit{external vertex} otherwise. We call an edge an \textit{internal edge} if both of its vertices are interior and call it an \textit{external edge} otherwise.
\end{defn}
For each $\mathfrak{t}=(T, i, v_{1})\in G_{k}$, let $C_{int}^{1}(T)$ be the set of all internal edges of $T$, and let $Gr(\mathfrak{t})$ be the set of all maps $l: C_{int}^{1}(T)\rightarrow \mathbb{R}^{+}$. We put $Gr_{k}=\bigcup_{\mathfrak{t}\in G_{k}}Gr(\mathfrak{t})$. Let $(T, i, v_{1}, l)\in Gr_{k}$ be a ribbon tree. We identify $T$ with $i(T)$ by the embedding $i$. In this paper, we denote the external edge $e_{i}$ if one of its vertices is $v_{i}$. We define a metric on $T$ such that the exterior edge $e_{i}\,(i=1,\dots,k-1)$ is isometric to $(-\infty , 0]$, exterior edge $e_{k}$ is isometric to $[0,\infty)$ and the interior edge $e$ is isometric to $[0, l(e)]$. We call $l(e)$ the \textit{length} of the interior edge $e$. The unit disk $D^{2}$ is separated into $k$ connected components by $i(T)$. We write one of connected components of $D^{2}\setminus i(T)$ as $D_{i}$ if the closure $\bar{D_{i}}$ contains $v_{i}$ and $v_{i+1}$. Note that, for each edge $e$, there are two subsets $D_{i},D_{j}$ such that its closure contains $e$. We define the integers $lef(e)$ and $rig(e)$ so that the closure of $D_{lef(e)}$ contains $e$ and $D_{lef(e)}$ is on the left side of $e$ with respect to the orientation of $e$ and $\mathbb{R}^{2}$. We define $rig(e)$ in the same way as $lef(e)$. In this paper, we define the orientation of edge $e$ such that $lef(e)=\min\{i,j\},rig(e)=\max\{i,j\}$. In order to define a gradient curve at the external edge $e_{k}$ in a similarly way as at other external edges, we consider the orientation of $e_{k}$ such that $lef(e_{k})=k,rig(e_{k})=1$, and define its metric so that $e_{k}$ is isometric to $(-\infty,0]$.
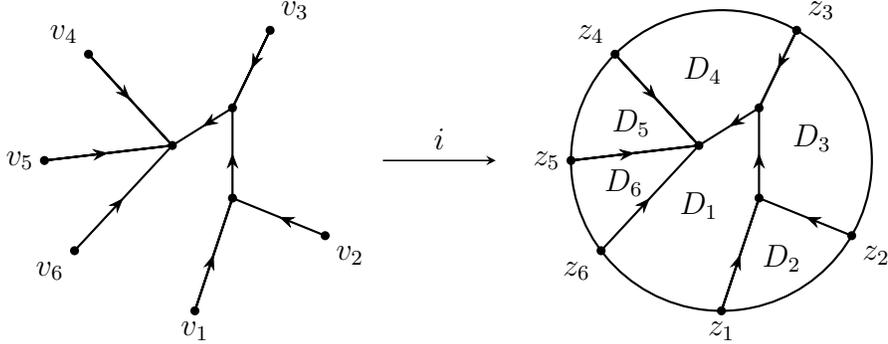
\begin{figure}[tb]
  \centering
  \begin{tikzpicture}
    \fill[black] (-6,{sqrt(3)}) circle (0.06);
    \fill[black] (-{sqrt(2)}-7,{sqrt(2)}) circle (0.06);
    \fill[black] (-9,0) circle (0.06);
    \fill[black] (-63/5+4,-6/5) circle (0.06);
    \fill[black] (-7,-2) circle (0.06);
    \fill[black] ({sqrt(3)-7},-1) circle (0.06);
    \draw[thick] ({sqrt(3)-7},-1)--(0.5-7,-0.5)--(-7,-2)--(-6.5,-0.5)--(-6.5,0.7)--(-6,{sqrt(3)})--(-
    6.5,0.7)--(-7.3,0.2)--(-{sqrt(2)}-7,{sqrt(2)})--(-7.3,0.2)--(-9,0)--(-7.3,0.2)--(-8/5-7,-6/5);
    \draw[->,>={Stealth[scale=1.25]}] ({0-7},-2)--({0.25-7},-1.25);
    \draw[->,>={Stealth[scale=1.25]}] ({sqrt(3)-7},-1)--({(sqrt(3)+0.5)/2-7},{(-1-0.5)/2});
    \draw[->,>={Stealth[scale=1.25]}] ({0.5-7},-0.5)--({0.5-7},0.1);
    \draw[->,>={Stealth[scale=1.25]}] ({1-7},{sqrt(3)})--({(1+0.5)/2-7},{(sqrt(3)+0.7)/2});
    \draw[->,>={Stealth[scale=1.25]}] ({0.5-7},0.7)--({(0.5-0.3)/2-7},{(0.7+0.2)/2});
    \draw[->,>={Stealth[scale=1.25]}] ({-sqrt(2)-7},{sqrt(2)})--({(-sqrt(2)-0.3)/2-7},{(sqrt(2)+0.2)/2});
    \draw[->,>={Stealth[scale=1.25]}] ({-2-7},0)--({(-2-0.3)/2-7},{(0+0.2)/2});
    \draw[->,>={Stealth[scale=1.25]}] ({-8/5-7},-6/5)--({(-8/5-0.3)/2-7},{(-6/5+0.2)/2});
    \fill[black] (-6.5,-0.5) circle (0.06);
    \fill[black] (-6.5,0.7) circle (0.06);
    \fill[black] (-7.3,0.2) circle (0.06);
    \draw[->,>=stealth,semithick] (-4.5,0)--(-3,0);
    \draw (-3.75,0) node[above]{$i$};
    \draw[thick] (0,0) circle (2);
    \fill[black] (1,{sqrt(3)}) circle (0.06);
    \fill[black] (-{sqrt(2)},{sqrt(2)}) circle (0.06);
    \fill[black] (-2,0) circle (0.06);
    \fill[black] (-8/5,-6/5) circle (0.06);
    \fill[black] (0,-2) circle (0.06);
    \fill[black] ({sqrt(3)},-1) circle (0.06);
    \draw[thick] ({sqrt(3)},-1)--(0.5,-0.5)--(0,-2)--(0.5,-0.5)--(0.5,0.7)--(1,{sqrt(3)})--(0.5,0.7)--(-0.3,0.2)--(-{sqrt(2)},{sqrt(2)})--(-0.3,0.2)--(-2,0)--(-0.3,0.2)--(-8/5,-6/5);
    \draw[->,>={Stealth[scale=1.25]}] (0,-2)--(0.25,-1.25);
    \draw[->,>={Stealth[scale=1.25]}] ({sqrt(3)},-1)--({(sqrt(3)+0.5)/2},{(-1-0.5)/2});
    \draw[->,>={Stealth[scale=1.25]}] (0.5,-0.5)--(0.5,0.1);
    \draw[->,>={Stealth[scale=1.25]}] (1,{sqrt(3)})--({(1+0.5)/2},{(sqrt(3)+0.7)/2});
    \draw[->,>={Stealth[scale=1.25]}] (0.5,0.7)--({(0.5-0.3)/2},{(0.7+0.2)/2});
    \draw[->,>={Stealth[scale=1.25]}] (-{sqrt(2)},{sqrt(2)})--({(-sqrt(2)-0.3)/2},{(sqrt(2)+0.2)/2});
    \draw[->,>={Stealth[scale=1.25]}] (-2,0)--({(-2-0.3)/2},{(0+0.2)/2});
    \draw[->,>={Stealth[scale=1.25]}] (-8/5,-6/5)--({(-8/5-0.3)/2},{(-6/5+0.2)/2});
    \fill[black] (0.5,-0.5) circle (0.06);
    \fill[black] (0.5,0.7) circle (0.06);
    \fill[black] (-0.3,0.2) circle (0.06);
    \draw (0,-2)node[below]{$z_{1}$};
    \draw ({sqrt(3)},-1)node[below right]{$z_{2}$};
    \draw (1,{sqrt(3)})node[above right]{$z_{3}$};
    \draw (-8/5,-6/5)node[below left]{$z_{6}$};
    \draw (-{sqrt(2)},{sqrt(2)})node[above left]{$z_{4}$};
    \draw (-2,0)node[left]{$z_{5}$};
    \draw (-7,-2)node[below]{$v_{1}$};
    \draw ({sqrt(3)-7},-1)node[below right]{$v_{2}$};
    \draw (-6,{sqrt(3)})node[above right]{$v_{3}$};
    \draw (-8/5-7,-6/5)node[below left]{$v_{6}$};
    \draw (-{sqrt(2)}-7,{sqrt(2)})node[above left]{$v_{4}$};
    \draw (-9,0)node[left]{$v_{5}$};
    \draw (-0.3,-0.6)node{$D_{1}$};
    \draw (0.8,-1.3)node{$D_{2}$};
    \draw (1.2,0.3)node{$D_{3}$};
    \draw (-0.25,1.2)node{$D_{4}$};
    \draw (-1.2,0.5)node{$D_{5}$};
    \draw (-1.3,-0.3)node{$D_{6}$};
  \end{tikzpicture}
  \caption{The ribbon tree ($k=6$) (Reproduced from \cite{S25}).}
\end{figure}
Now we review the moduli space of gradient trees.
\begin{defn}[\cite{FO97}]
  Let $M$ be a Riemannian manifold, and fix a Riemannian metric $g$ of $M$. Let $f_{1}, \cdots , f_{k}$ be $C^{\infty}$-functions on $M$ such that $f_{i+1}-f_{i}$ is a Morse function for each $i=1, \cdots , k$. Here we put $f_{k+1}=f_{1}$. Let $p_{i}$ be one of critical points of $f_{i+1}-f_{i}$. An element of the moduli space $\mathcal{M}_{g}(M, \vec{f}, \vec{p})$ of gradient trees is a pair $((T, i, v_{1}, l), I)$ of $(T, i, v_{1}, l)\in Gr_{k}$ and a map $I: T\rightarrow M$ which satisfies the following conditions:\\
  (1) $I$ is continuous, $I(v_{i})=p_{i}$.\\
  (2) For each exterior edge $e_{i}$, identify $e_{i}\simeq (-\infty, 0]$, we have
  \[
    \dfrac{dI \left\lvert_{e_{i}} \right.}{dt}=-\grad_{g}(f_{i+1}-f_{i}).
  \]
  (3) For each interior edge $e$, identify $e\simeq [0, l(e)]$, then
  \[
    \dfrac{dI \left\lvert_{e} \right.}{dt}=-\grad_{g}(f_{rig(e)}-f_{lef(e)}).
  \]
\end{defn}
We call an element $((T, i, v_{1}, l), I)$ of the moduli space $\mathcal{M}_{g}(M;\vec{f},\vec{p})$ or the map $I$ the \textit{gradient tree}. The \textit{Morse function} is the function whose critical points are non-degenerate (see \cite{AD14}). Next, we recall the moduli space of pseudoholomorphic disks. This moduli space is defined by the moduli space $\mathfrak{T}_{0,k}$ of disks with $k$ marked points which is defined as follows:
\[
  \mathfrak{T}_{0,k}\coloneqq\left\{(z_{1},\cdots ,z_{k})\in (\partial D^{2})^{k}\left\lvert
      \begin{array}{l}
        z_{i}\neq z_{j}(i\neq j),\\
        z_{1},\cdots ,z_{k}\ \text{respects the cyclic order of}\  \partial D^{2}
      \end{array}
    \right\}\right. \biggl{/}\sim\,.
\]
Here, we use the counterclockwise cyclic ordering for $\partial D^{2}$. Also, we denote $(z_{1},\cdots ,z_{n})\sim (z_{1}',\cdots ,z_{n}')$ if and only if there exists a biholomorphic map $\varphi :D^{2}\rightarrow D^{2}$ such that $\varphi (z_{i})=z_{i}'$. We next prepare definitions of pseudoholomorphic disks.
\begin{defn}[\cite{FO97}]
  Let $(M,g)$ be a Riemannian manifold. Let $\omega$ be the standard symplectic form of the cotangent manifold $T^{*}M$, and let $J$ be the almost complex structure of $T^{*}M$ which is compatible with $\omega$. Let $L_{1},\cdots ,L_{k}$ be Lagrangian submanifolds of $T^{*}M$, and let $x_{i}$ be one of intersection points of $L_{i}$ and $L_{i+1}$. Here, we put $L_{k+1}=L_{1}$. An element of the moduli space $\mathcal{M}_{J}(T^{*}M,\vec{L},\vec{x})$ of pseudoholomorphic disks is a pair $([z_{1},\cdots ,z_{k}],w)$ of elements $[z_{1},\cdots ,z_{k}]\in \mathfrak{T}_{0,k}$ and a smooth map $w: D^{2}\rightarrow T^{*}M$ satisfying the following conditions:\\
  (1) $w(z_{i})=x_{i}$\\
  (2) Denote $\partial_{i}$ the connected component of $\partial D^{2}\setminus\{z_{1},\cdots ,z_{k}\}$ which vertices are $z_{i-1}$ and $z_{i}$. Then, $w(\partial_{i})\subset L_{i}$.\\
  (3) Denote $j$ the canonical complex structure of $D^2$. Then, the differential map of $w$ is compatible with $J$ and $j$, it means $J\circ dw = dw\circ j$.
\end{defn}
We call an element $([z_{1},\cdots ,z_{k}],w)$ of the moduli space $\mathcal{M}_{J}(T^{*}M;\vec{L},\vec{x})$ itself a \textit{pseudoholomorphic disk}. We denote the exact Lagrangian submanifold $L_{i}$ as the graph of $df_{i}$, i.e.,
\[
  L_{i}=\operatorname{graph}(df)\coloneqq\left\{(p,df_{i}(p)):p\in M\right\}.
\]
Here, we denote $L_{i}^{\epsilon}\coloneqq\operatorname{graph}(\epsilon df_{i})$, and denote $x_{i}^{\epsilon}$ one of points of $L_{i}^{\epsilon}\cap L_{i+1}^{\epsilon}$. There exists a correspondence between two moduli spaces. 
\begin{thm}[\cite{FO97}]
  Let $\pi$ be the cotangent bundle $\pi:T^{*}M\rightarrow M$. We set $p_{i}=\pi(x_{i}^{\epsilon})$. Let $J=J_{g}$ be the canonical almost complex structure on $T^{*}M$ associated to the metric $g$ on $M$. For each generic\footnote{The term ``generic'' in this sentence refers to the transversality of the unstable manifolds associated with the gradient flows. Therefore, it is distinct from the term ``generic'' which we refer to the image of pseudoholomorphic disks.} $\vec{f}=(f_{i})$ and for sufficiently small $\epsilon$, we have an oriented diffeomorphism $\mathcal{M}_{g}(M:\vec{f},\vec{p})\simeq \mathcal{M}_{J}(T^{*}M:\vec{L^{\epsilon}},\vec{x^{\epsilon}})$.
\end{thm}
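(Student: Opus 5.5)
This is the Fukaya–Oh theorem, which the paper cites from \cite{FO97} rather than proves. A plan for proving it would follow the adiabatic-limit strategy of \cite{FO97}. I would first construct, for each gradient tree $((T,i,v_1,l),I)\in\mathcal{M}_g(M;\vec f,\vec p)$, an approximate pseudoholomorphic disk $w^{app}_\epsilon$. To do this, rescale the tree metric by $\epsilon^{-1}$, so that edges of length $l(e)$ become strips $[0,l(e)/\epsilon]\times[0,1]$ and external edges become half-infinite strips. On the strip over an edge $e$, define
\[
  w^{app}_\epsilon(\tau,t)=\bigl(\gamma_e(\epsilon\tau),\ \epsilon\,(t\,df_{rig(e)}+(1-t)\,df_{lef(e)})(\gamma_e(\epsilon\tau))\bigr),
\]
where $\gamma_e=I|_e$. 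This map has boundary on $L^\epsilon_{lef(e)}$ and $L^\epsilon_{rig(e)}$, and its $\bar\partial_{J_g}$-error is $O(\epsilon^2)$ in a suitable weighted norm, because $\gamma_e$ is a gradient line of $f_{rig(e)}-f_{lef(e)}$. Near each internal vertex, glue in a small "Y-piece": a $J_g$-holomorphic polygon that is exactly constant in the base to leading order, interpolated with cutoff functions. The domain obtained by gluing strips along the ribbon structure is a disk with $k$ boundary punctures, and this determines a point of $\mathfrak{T}_{0,k}$.

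Second, I would prove uniform invertibility of the linearized operator $D_{w^{app}_\epsilon}\bar\partial$ on weighted Sobolev spaces $W^{1,p}_\delta\to L^p_\delta$. This is done together with the variations of the conformal structure. On each strip, the linearization decomposes, after rescaling, into the Morse-theoretic operator $\frac{d}{d\tau}+\epsilon\,\mathrm{Hess}(f_{rig}-f_{lef})$ acting on the base direction plus an invertible fiber part. Transversality of the gradient tree, which is the genericity hypothesis on $\vec f$, then gives surjectivity of the limiting operator. Its kernel has dimension equal to that of $\mathcal{M}_g$, and that kernel is matched with the tangent space of the tree moduli via the edge-length parameters. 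A Newton–Picard iteration with a quadratic estimate then yields a unique genuine solution near $w^{app}_\epsilon$. This defines a smooth injective map $\mathcal{M}_g\to\mathcal{M}_J$.

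Third, surjectivity. I would take any sequence of disks $w_{\epsilon_n}$ with $\epsilon_n\to0$. Energy equals the action difference, which is $O(\epsilon)$, so after rescaling the fiber by $\epsilon^{-1}$ one gets $C^0$ bounds. Running a thick–thin decomposition shows that on thin parts (long strips) the projection to $M$ converges to gradient lines of $f_i-f_j$, while thick parts collapse to points. The limit is therefore a gradient tree, and for small $\epsilon$ each $w_\epsilon$ lies in the neighborhood where the Newton uniqueness applies. Orientation compatibility follows by comparing the determinant lines of the two Fredholm problems along the homotopy.

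I expect the main obstacle to be the uniform estimates in the second step. The inverse of the linearization must be bounded independently of $\epsilon$, even though strip lengths blow up like $1/\epsilon$ and the Hessian term degenerates like $\epsilon$. My first attempt would be to choose $\epsilon$-dependent weights $e^{\delta\epsilon|\tau|}$ adapted to the Morse exponents. I would then prove the estimate strip by strip, using the nondegeneracy of the critical points $p_i$ at the ends and the transversality of stable and unstable manifolds at the vertices, and patch the local estimates with cutoff functions.
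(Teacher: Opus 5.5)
The paper does not prove this statement. It is quoted from \cite{FO97}, and the introduction only records that Fukaya and Oh first build approximate disks and then find exact ones nearby. Your outline follows that strategy, but the way you propose to handle its analytic core, which you correctly single out as the main obstacle, cannot work.

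You aim for an inverse of $D_{w^{app}_\epsilon}\bar\partial$ bounded independently of $\epsilon$, as a map $L^p_\delta\to W^{1,p}_\delta$ with weights $e^{\delta\epsilon|\tau|}$. No such bound exists in these norms, and the reason is the Morse block you isolate yourself. On a strip of length about $l(e)/\epsilon$, take a $t$-independent horizontal section $\xi\approx\phi(\epsilon\tau)v$ with $\phi$ compactly supported. Correct it by an $O(\epsilon)$ vertical term so that it satisfies the linearized boundary conditions. Every term of $D\xi$ is then $O\bigl(\epsilon(|\phi|+|\phi'|)\bigr)$ at $s=\epsilon\tau$; this is just the block $\epsilon(\frac{d}{ds}+A)$. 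Your weight varies only on the scale $\epsilon^{-1}$, so $\|\xi\|_{W^{1,p}_\delta}\geq c_\phi\,\epsilon^{-1}\|D\xi\|_{L^p_\delta}$ for every fixed $\delta$. Since the kernel, even including the conformal and edge-length directions, is finite dimensional, every right inverse has norm $\gtrsim\epsilon^{-1}$.

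So the splitting you mention has to be built into the functional-analytic setup; weights will not cure it. Concretely:
\begin{itemize}
\item the $t$-averaged horizontal component must be inverted through the Morse operator, which is where transversality of the tree enters, at the unavoidable cost $\epsilon^{-1}$;
\item the higher $t$-modes must be inverted through the uniformly invertible remainder;
\item the two parts must be measured with different powers of $\epsilon$.
\end{itemize}
One must then check that the error of $w^{app}_\epsilon$ and the quadratic remainder are small enough in those norms for the Newton--Picard step to contract. The error is vertical, so to leading order it lies in the uniformly invertible part. This is the real content of \cite{FO97}, and your plan leaves it open.

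Two further steps are only asserted and rest on the same estimates. First, surjectivity: $C^0$ convergence of the projections to gradient lines, together with collapse of the thick parts, is not by itself enough to place $w_\epsilon$ in the neighborhood where your uniqueness statement holds. You need convergence in the $\epsilon$-adapted norm, including matching of the domain's conformal structure with the rescaled tree. In the present setting, the leading-order form of this matching is $\log z_{4,\epsilon}\sim-\pi l/\epsilon$ (Lemma \ref{length-4thpt}).

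Second, when $\dim\mathcal{M}_g>0$, the moduli space contains trees with internal edges of length $\lesssim\epsilon$ and, on codimension-one walls, vertices of valence at least four. There, separate trivalent Y-pieces joined by strips of length $l(e)/\epsilon$ no longer make sense. The construction has to be made uniform across these walls, using vertex models that carry their own moduli. Otherwise you obtain a diffeomorphism only on compact subsets of the trivalent stratum, not of all of $\mathcal{M}_g$.
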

In this paper, we will construct gradient trees and pseudoholomorphic disks in the case of $M=\mathbb{R}$ and that Lagrangian sections are affine. Then, we discuss a correspondence between them. It is known that the unit disk is conformal to the upper half plane. Let $[z_{1},z_{2},\dots,z_{n}]\in\mathfrak{T}_{0,n}$ be a unit disk with $n$ points. Let $\varphi$ be the conformal map from the unit disk to the upper half plane such that $\varphi(z_{1})=1,\varphi(z_{2})=\infty,\varphi(z_{3})=0$. When $n\geq 4$, we set $\xi_{1},\xi_{2},\dots,\xi_{n-3}\in\mathbb{R}$ such that $\varphi(z_{i+3})=\xi_{i}$ for $i=1,2,\dots,n-3$. We thus identify pseudoholomorphic disks $w$ from a unit disk with $n$ marked points $z_{1},z_{2},\dots,z_{n}$ with the holomorphic map $w\circ\varphi^{-1}$ from the upper half plane with marked points $1,\infty, 0, \xi_{1},\xi_{2},\dots,\xi_{n-3}$ in this paper. We hereafter call such map $w$ as a holomorphic disk instead of a pseudoholomorphic disk since pseudoholomorphic disks we treat in this paper are holomorphic.

\subsection{Schwarz-Christoffel maps, hypergeometric function and their properties.}
Since Lagrangian sections we treat are affine, we study holomorphic disks which map the upper half plane to bounded polygonal domains. 
\begin{thm}[\cite{DT02},\cite{Ahl-ca},\cite{Neh75} etc.]
  \label{SCmap}
  Let $P$ be the interior of a polygon $\Gamma$ having vertices $w_{1},w_{2},...,w_{n}$ and interior angles $\alpha_{1}\pi, \alpha_{2}\pi,..., \alpha_{n}\pi$ in counterclockwise order. Let $f$ be a conformal map from the upper half plane $H^{+}$ to $P$ with $f(\infty)=w_{n}$. Then we obtain 
  \[
    f(z)=A+B\int_{}^{z}\prod_{k=1}^{n-1}(\zeta-z_{k})^{\alpha_{k}-1}d\zeta
  \]
  for some complex constants $A$ and $B$, where $w_{k}=f(z_{k})$ for $k=1,...,n-1$. We here take branches of $(\zeta-z_{i})^{\alpha_{i}}$ at each $i=1,2,\dots,n-1$ such that 
  \[
    (\zeta-z_{i})^{\alpha_{i}}=
    \begin{cases*}
      (\zeta-z_{i})^{\alpha_{i}}\,(\zeta\in \mathbb{R},\zeta>z_{i})\\
      \left\lvert \zeta-z_{i}\right\rvert^{\alpha_{i}}e^{\pi\alpha_{i}i}\,(\zeta\in \mathbb{R},\zeta<z_{i})
    \end{cases*}.
  \]
\end{thm}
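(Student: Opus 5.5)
To prove Theorem~\ref{SCmap} I will characterize $\log f'$. Since $f$ is conformal, $f'$ does not vanish on $H^{+}$. So I will show that
\[
g(z)\coloneqq\frac{f''(z)}{f'(z)}
\]
equals $\sum_{k=1}^{n-1}(\alpha_k-1)/(z-z_k)$. Integrating twice then gives the stated formula.

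\emph{Extension across the real axis.} By Carath\'eodory's theorem, $f$ extends to a homeomorphism of $\overline{H^{+}}\cup\{\infty\}$ onto $\overline{P}$. This extension sends each interval $(z_k,z_{k+1})$, and the two rays $(-\infty,z_1)$ and $(z_{n-1},\infty)$, onto a straight side of $\Gamma$. On such an interval $I$, the function $e^{-i\theta}(f-w_k)$ is real for a suitable angle $\theta$. The Schwarz reflection principle therefore continues $f$ analytically across $I$, and the continuation is still locally injective because reflection preserves conformality. Hence $f'\neq 0$ on $I$, $g$ is holomorphic there, and $\operatorname{Im}(g)=0$ on $I$, since $\arg f'$ is constant along a straight side.

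\emph{Local behaviour at a vertex.} Near $z_k$, consider $h(z)=(f(z)-w_k)^{1/\alpha_k}$. This maps a half-disk neighbourhood of $z_k$ onto a region bounded near $0$ by a straight segment. Reflection applies again, so $h$ is analytic and univalent near $z_k$ with $h(z_k)=0$, hence $h'(z_k)\neq0$. Consequently
\[
f(z)=w_k+(z-z_k)^{\alpha_k}\phi(z)
\]
with $\phi$ analytic and $\phi(z_k)\neq0$. From this, $g(z)-(\alpha_k-1)/(z-z_k)$ is analytic near $z_k$.

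\emph{Behaviour at infinity.} At $\infty\mapsto w_n$, the same argument applied in the coordinate $\zeta=-1/z$ shows that
\[
g(z)=-\frac{\alpha_n+1}{z}+O(z^{-2}).
\]
In particular $g(z)\to0$ as $z\to\infty$.

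\emph{Liouville step.} By the reflection principle applied to $g$ itself (it is real on $\mathbb{R}\setminus\{z_k\}$), $g$ extends to a meromorphic function on the Riemann sphere. Its only poles are simple poles at the $z_k$, with residues $\alpha_k-1$. The difference
\[
g-\sum_{k=1}^{n-1}\frac{\alpha_k-1}{z-z_k}
\]
is therefore entire and tends to $0$ at $\infty$, so it vanishes identically by Liouville's theorem. Integrating gives $\log f'=\log B+\sum_k(\alpha_k-1)\log(\zeta-z_k)$, and integrating once more gives the formula with constants $A$ and $B$.

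\emph{Branches.} The branch convention of the statement is the principal branch on $H^{+}$ extended continuously to the real axis. For $\zeta>z_i$ we have $\arg(\zeta-z_i)=0$, and for $\zeta<z_i$ approached from above we have $\arg(\zeta-z_i)=\pi$. This matches the displayed definition.

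\emph{Main obstacle.} I expect the main difficulty to be the local analysis at the vertices, namely justifying that $(f-w_k)^{1/\alpha_k}$ is univalent and reflects correctly. Doing this requires the boundary-correspondence theorem and care with a convex versus a reflex angle. I will handle it by first mapping a small sector at $w_k$ to a half-disk by $u\mapsto(u-w_k)^{1/\alpha_k}$, and then applying reflection to the composite map.
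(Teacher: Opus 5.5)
Your argument is correct and is the classical proof of the Schwarz--Christoffel formula: reflection across the sides, the local uniformization $(f-w_k)^{1/\alpha_k}$ at the vertices, and Liouville's theorem applied to $f''/f'$. This is the proof in the sources the paper cites (e.g.\ Ahlfors, Nehari); the paper itself gives no proof and simply quotes the theorem. The one step to tighten is $f'\neq 0$ on the open sides. That does not follow from ``reflection preserves conformality'' alone. It follows because the reflected map sends the upper and lower half-disks to opposite sides of the side's line and is injective on the real segment, so it is injective near each such point.
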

This map $f$ is called the \textit{Schwarz-Christoffel map} from the upper half plane. As we mention at subsection 3.2, we can locally describe a Schwarz-Christoffel map $f$ by power series. Now, let us study integral representations and connection formulas of the hypergeometric series (and functions). We use Appell's $F_{1}$ function in the case $k=4$. Appell's hypergeometric function $F_{1}$ is defined as follows. 
\begin{defn}[\cite{Erd50},\cite{Mim22} etc.]
  \textit{Appell's hypergeometric function} $F_{1}$ is the analytic continuation of Appell's hypergeometric series
  \[
    F_{1}(a,b_{1},b_{2},c;x,y)=\sum_{m,n\geq 0}\dfrac{(a)_{m+n}(b_{1})_{m}(b_{2})_{n}}{(c)_{m+n}m!n!}x^{m}y^{n}, \left\lvert x\right\rvert<1, \left\lvert y\right\rvert<1 
  \]
  where $a,b_{1},b_{2},c$ are complex numbers, and $c$ is not a negative integer or zero. 
\end{defn}
This hypergeometric function $F_{1}$ has the following integral representation.
\begin{prop}[\cite{Erd50},\cite{Mim22} etc.]
  \label{AppellF1}
  Let $a,b_{1},b_{2},c$ be the complex number such that $0<\operatorname{Re}a<\operatorname{Re}c$. Then, we obtain 
  \[
    F_{1}(a,b_{1},b_{2},c;x,y)=\dfrac{\Gamma(c)}{\Gamma(a)\Gamma(c-a)}\int_{0}^{1}t^{a-1}(1-t)^{c-a-1}(1-xt)^{-b_{1}}(1-yt)^{-b_{2}}\,dt
  \]
  when $\left\lvert x\right\rvert<1,\left\lvert y\right\rvert<1$.
\end{prop}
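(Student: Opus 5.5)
We prove the Euler-type integral representation of Proposition~\ref{AppellF1} by expanding the integrand in a double power series and integrating term by term against the Beta integral. Fix $x,y$ with $\left\lvert x\right\rvert<1$, $\left\lvert y\right\rvert<1$. For $t\in[0,1]$ we have $\left\lvert xt\right\rvert\le\left\lvert x\right\rvert<1$ and $\left\lvert yt\right\rvert\le\left\lvert y\right\rvert<1$, so the binomial series give
\[
(1-xt)^{-b_{1}}=\sum_{m\geq0}\dfrac{(b_{1})_{m}}{m!}x^{m}t^{m},\qquad (1-yt)^{-b_{2}}=\sum_{n\geq0}\dfrac{(b_{2})_{n}}{n!}y^{n}t^{n},
\]
with principal branches, which are real-positive at $t=0$. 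Multiplying the two series, the integrand becomes
\[
\sum_{m,n\geq0}\dfrac{(b_{1})_{m}(b_{2})_{n}}{m!\,n!}x^{m}y^{n}\,t^{a+m+n-1}(1-t)^{c-a-1}.
\]

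Next we interchange sum and integral. The coefficients are dominated by $\frac{(\left\lvert b_{1}\right\rvert)_{m}(\left\lvert b_{2}\right\rvert)_{n}}{m!n!}\left\lvert x\right\rvert^{m}\left\lvert y\right\rvert^{n}$, whose double sum converges to $(1-\left\lvert x\right\rvert)^{-\left\lvert b_{1}\right\rvert}(1-\left\lvert y\right\rvert)^{-\left\lvert b_{2}\right\rvert}<\infty$. Since $t^{m+n}\le1$, the absolute value of each term is at most this coefficient times $\left\lvert t^{a-1}(1-t)^{c-a-1}\right\rvert=t^{\operatorname{Re}a-1}(1-t)^{\operatorname{Re}(c-a)-1}$. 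The hypothesis $0<\operatorname{Re}a<\operatorname{Re}c$ makes this function integrable on $(0,1)$. Fubini--Tonelli, or dominated convergence, therefore justifies termwise integration.

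Each term is then evaluated by the Beta integral:
\[
\int_{0}^{1}t^{a+m+n-1}(1-t)^{c-a-1}dt=\dfrac{\Gamma(a+m+n)\Gamma(c-a)}{\Gamma(c+m+n)}.
\]
Multiplying by $\Gamma(c)/(\Gamma(a)\Gamma(c-a))$ and using $\Gamma(a+k)/\Gamma(a)=(a)_{k}$ and $\Gamma(c+k)/\Gamma(c)=(c)_{k}$ with $k=m+n$, the $(m,n)$ coefficient becomes $\frac{(a)_{m+n}(b_{1})_{m}(b_{2})_{n}}{(c)_{m+n}m!n!}$. This is exactly the coefficient in the series defining $F_{1}$. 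Here $\operatorname{Re}c>0$ guarantees that $c$ is not a non-positive integer, so the series is well defined.

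The only delicate step is justifying the interchange near the endpoints $t=0,1$. The domination bound above handles it, and this is precisely where the hypothesis $0<\operatorname{Re}a<\operatorname{Re}c$ is used.
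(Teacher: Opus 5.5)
Your proof is correct and is the standard argument. The paper does not prove this proposition itself but cites it from \cite{Erd50,Mim22}, where it is established exactly as you do: expand $(1-xt)^{-b_{1}}(1-yt)^{-b_{2}}$ binomially, integrate termwise against the Beta integral, and justify the interchange by the domination $\left\lvert (b_{1})_{m}\right\rvert\le(\left\lvert b_{1}\right\rvert)_{m}$ together with the integrability of $t^{\operatorname{Re}a-1}(1-t)^{\operatorname{Re}(c-a)-1}$.
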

We need other power series in the case $(x,y)$ does not satisfy $\left\lvert x\right\rvert<1,\left\lvert y\right\rvert<1$. This can be realized by connection formulas of $F_{1}$ and $G_{2}$. We first review Horn's hypergeometric function $G_{2}$.
\begin{defn}[\cite{Ols64},\cite{Mim22} etc.]
  Let $\alpha,\beta,\gamma,\delta$ be complex numbers. \textit{Horn's hypergeometric function} $G_{2}$ is the analytic continuation of the following hypergeometric series
  \[
    G_{2}(\alpha,\beta,\gamma,\delta;x,y)\coloneqq\sum_{m,n\geq 0}(\alpha)_{m}(\beta)_{n}(\gamma)_{n-m}(\delta)_{m-n}\dfrac{x^{m}}{m!}\dfrac{y^{n}}{n!}, \left\lvert x\right\rvert<1, \left\lvert y\right\rvert<1,
  \]
  where $(a)_{m-n}=\Gamma(a+m-n)/\Gamma(a)$.
\end{defn}
The following formula is the connection formula for $F_1$ which is used in \cite{S25}.
\begin{lem}[\cite{Ols64},\cite{Mim22} etc.]
  \label{F1conn1}
  If $\gamma,\beta-\alpha,\beta-\gamma\notin\mathbb{Z}$, then the following holds:
  \begin{align*}
    &F_{1}(\alpha,\beta',\beta,\gamma;y,x)\\
    &=\dfrac{\Gamma(\beta-\alpha)\Gamma(\gamma)}{\Gamma(\beta)\Gamma(\gamma-\alpha)}(-x)^{-\alpha}F_{1}\left(\alpha,1+\alpha-\gamma,\beta',1+\alpha-\beta;\dfrac{1}{x},\dfrac{y}{x}\right)\\
    &+\dfrac{\Gamma(\alpha-\beta)\Gamma(\gamma)}{\Gamma(\alpha)\Gamma(\gamma-\beta)}(-x)^{-\beta}G_{2}\left(\beta,\beta',\alpha-\beta,1+\beta-\gamma;-\dfrac{1}{x},-y\right).
  \end{align*}
  Here the arguments of $-x$ and $-y$ of the factors $(-x)^{*}$ and $(-y)^{*}$ are assigned to be zero on the real region $\infty<x<y<0$ (see \cite{Mim22}).
\end{lem}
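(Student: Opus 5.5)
We prove Lemma \ref{F1conn1} by reducing the double series $F_1$ to a single series whose coefficients are Gauss functions $_2F_1$ in one variable, applying the classical connection formula for $_2F_1$ between $0$ and $\infty$ to those coefficients, and resumming the two resulting families of terms into $F_1$ and $G_2$. Both sides are analytic in the relevant region, so by the identity theorem it suffices to prove the identity on an open subset of the real region $-\infty<x<y<0$, say with $|y|$ small and $|x|$ large, and then continue analytically.

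First I would expand in $y$ and keep $x$ as the main variable. From the series definition,
\[
F_1(\alpha,\beta',\beta,\gamma;y,x)=\sum_{m\ge0}\frac{(\alpha)_m(\beta')_m}{(\gamma)_m m!}\,y^m\;{}_2F_1(\alpha+m,\beta;\gamma+m;x),
\]
since $(\alpha)_{m+n}/(\gamma)_{m+n}=\frac{(\alpha)_m}{(\gamma)_m}\frac{(\alpha+m)_n}{(\gamma+m)_n}$. Each coefficient is then rewritten by the standard connection formula for $x\to\infty$:
\[
{}_2F_1(a,b;c;x)=\frac{\Gamma(c)\Gamma(b-a)}{\Gamma(b)\Gamma(c-a)}(-x)^{-a}{}_2F_1\!\left(a,1+a-c;1+a-b;\tfrac1x\right)+\frac{\Gamma(c)\Gamma(a-b)}{\Gamma(a)\Gamma(c-b)}(-x)^{-b}{}_2F_1\!\left(b,1+b-c;1+b-a;\tfrac1x\right),
\]
with $a=\alpha+m$, $b=\beta$, $c=\gamma+m$. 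This formula is valid when $b-a\notin\mathbb{Z}$, which holds here because $\beta-\alpha\notin\mathbb{Z}$; the other hypotheses $\gamma,\beta-\gamma\notin\mathbb{Z}$ make the Gamma factors and the parameter $1+\beta-\gamma$ well defined. The branch convention that $\arg(-x)=0$ on $x<0$ matches the one in the lemma.

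Next come the two sums.

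\textbf{First family.} The quotient $\Gamma(\gamma+m)\Gamma(\beta-\alpha-m)/(\Gamma(\beta)\Gamma(\gamma-\alpha))$, multiplied by $(\alpha)_m(\beta')_m/((\gamma)_m m!)$, simplifies via $\Gamma(\beta-\alpha-m)=\Gamma(\beta-\alpha)(-1)^m/(1+\alpha-\beta)_m$. Together with $(-x)^{-\alpha-m}y^m=(-x)^{-\alpha}(y/x)^m$ and the expansion of ${}_2F_1(\alpha+m,1+\alpha-\gamma;1+\alpha-\beta+m;1/x)$, the sum becomes
\[
\sum_{m,n}\frac{(\alpha)_{m+n}(\beta')_m(1+\alpha-\gamma)_n}{(1+\alpha-\beta)_{m+n}m!n!}\left(\frac yx\right)^m x^{-n},
\]
which is exactly $F_1(\alpha,1+\alpha-\gamma,\beta',1+\alpha-\beta;1/x,y/x)$ with the stated prefactor.

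\textbf{Second family.} The coefficient is $\Gamma(\gamma+m)\Gamma(\alpha+m-\beta)/(\Gamma(\alpha+m)\Gamma(\gamma+m-\beta))$ times ${}_2F_1(\beta,1+\beta-\gamma-m;1+\beta-\alpha-m;1/x)$, and the power $(-x)^{-\beta}$ is independent of $m$. Writing $(\alpha-\beta)_{m}$, $(1+\beta-\gamma)_{n-m}$ and the like with the convention $(a)_{k}=\Gamma(a+k)/\Gamma(a)$ for negative $k$, and using $(-1)^n x^{-n}=(-1/x)^n$ and $y^m=(-1)^m(-y)^m$, the double sum should collapse to $\sum(\beta)_n(\beta')_m(\alpha-\beta)_{m-n}(1+\beta-\gamma)_{n-m}(-1/x)^n(-y)^m/(m!n!)$. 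This is $G_2(\beta,\beta',\alpha-\beta,1+\beta-\gamma;-1/x,-y)$ times $\Gamma(\alpha-\beta)\Gamma(\gamma)/(\Gamma(\alpha)\Gamma(\gamma-\beta))$.

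I expect this Pochhammer bookkeeping with negative shifts and signs to be the main obstacle. The key identities are $\Gamma(\gamma+m)/\Gamma(\gamma+m-\beta)=\frac{\Gamma(\gamma)}{\Gamma(\gamma-\beta)}\cdot\frac{(-1)^m}{(1+\beta-\gamma)_{-m}}$ and $(1+\beta-\gamma-m)_n=(1+\beta-\gamma)_{n-m}/(1+\beta-\gamma)_{-m}$, together with the analogous ones for $\alpha$. Here I would check the sign count carefully.

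Finally, interchanging the $m$-sum with the connection formula needs justification. For $|y|$ small and $|x|$ large, all the series converge absolutely and uniformly: the $G_2$ series converges for small $|1/x|$ and $|y|$, and the $F_1$ series for small $|1/x|$ and $|y/x|$. So the rearrangement is legitimate there, and analytic continuation extends the identity to the stated region.
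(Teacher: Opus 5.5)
Your overall route is sound and the algebra does close. However, one analytic step is missing, and two intermediate identities are misstated. For orientation: the paper gives no proof of Lemma \ref{F1conn1}; it is quoted from Olsson and Mimachi. The termwise device you use (expand $F_1$ in the small variable, apply a one-variable ${}_2F_1$ connection formula to each coefficient, resum) is exactly what the paper itself does in the proof of Lemma \ref{sqr-rep1} for the logarithmic case.

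\textbf{The gap.} Write $c_m=(\alpha)_m(\beta')_m/((\gamma)_m m!)$. The expansion $F_1(\alpha,\beta',\beta,\gamma;y,x)=\sum_m c_m y^m\,{}_2F_1(\alpha+m,\beta;\gamma+m;x)$ is proved only for $|x|<1$, where it is a rearrangement of the double series.

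At the points where you use it ($|x|$ large), the left-hand side is the analytic continuation of $F_1$. Nothing in your argument shows that this continuation equals the series of termwise-continued ${}_2F_1$'s. Your last paragraph only shows that the two resummed series converge for $|x|$ large and $|y|$ small. That proves the termwise series converges there and equals the right-hand side, not that it equals the left-hand side.

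The identity theorem cannot bridge this by itself. Near $|x|=1$ (say $x=-1$) neither the series in $x$ nor the series in $1/x$ converges. You therefore need an $m$-uniform bound on ${}_2F_1(\alpha+m,\beta;\gamma+m;x)$ on a connected set joining the two regions. An elementary one comes from Pfaff's transformation, which gives
\[
{}_2F_1(\alpha+m,\beta;\gamma+m;x)=(1-x)^{-\beta}\,{}_2F_1\Bigl(\gamma-\alpha,\beta;\gamma+m;\frac{x}{x-1}\Bigr).
\]
Here $|x/(x-1)|<1$ exactly when $\operatorname{Re}x<1/2$, and on compact subsets of that half-plane $|x/(x-1)|\le w_0<1$. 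Once $\operatorname{Re}\gamma+m\ge 1$ one has $|(\gamma+m)_n|\ge n!$, so the right side is bounded by $\sum_n|(\gamma-\alpha)_n(\beta)_n|w_0^n/(n!)^2$ independently of $m$.

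Hence the termwise series converges locally uniformly on the convex set $\{\operatorname{Re}x<1/2\}\times\{|y|<1\}$. This set meets the polydisc and contains the points with $x\ll-1$ and $|y|$ small. There the series is the continuation of $F_1$, and only then does your resummation prove the lemma.

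Alternatively, expand $(1-yt)^{-\beta'}$ inside the Euler integral of Proposition \ref{AppellF1}. This gives the expansion on the whole cut $x$-plane when $0<\operatorname{Re}\alpha<\operatorname{Re}\gamma$, and you then continue in the parameters.

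\textbf{Bookkeeping.} In the first family, $(-x)^{-\alpha-m}y^m=(-1)^m(-x)^{-\alpha}(y/x)^m$. It is this $(-1)^m$ that cancels the one coming from $\Gamma(\beta-\alpha-m)$. Your two stated relations together therefore leave a stray $(-1)^m$, which your final formula silently omits.

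In the second family, your identity for $\Gamma(\gamma+m)/\Gamma(\gamma+m-\beta)$ is inverted. Since $(1+\beta-\gamma)_{-m}=(-1)^m/(\gamma-\beta)_m$, the correct form is $\frac{\Gamma(\gamma)}{\Gamma(\gamma-\beta)}(\gamma)_m(-1)^m(1+\beta-\gamma)_{-m}$. With your version, $(1+\beta-\gamma)_{-m}$ would end up squared in the denominator instead of cancelling.

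It is cleaner to first reduce the $m$-th coefficient to $\frac{\Gamma(\gamma)\Gamma(\alpha-\beta)}{\Gamma(\alpha)\Gamma(\gamma-\beta)}\cdot\frac{(\alpha-\beta)_m(\beta')_m}{(\gamma-\beta)_m\,m!}$. Then use $(\alpha-\beta)_m/(1+\beta-\alpha-m)_n=(-1)^n(\alpha-\beta)_{m-n}$ (by the reflection formula) and $(1+\beta-\gamma-m)_n/(\gamma-\beta)_m=(-1)^m(1+\beta-\gamma)_{n-m}$. The sign $(-1)^{m+n}$ is absorbed into $(-1/x)^n(-y)^m$, and you get exactly $G_2(\beta,\beta',\alpha-\beta,1+\beta-\gamma;-1/x,-y)$ with the stated prefactor.
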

In non-generic cases, the quotient of gamma functions in this formula may have a numerator whose argument vanishes. This means that we should discuss the analytic continuation of $F_1(\alpha,\beta,\beta',\gamma;x,y)$ when $\beta-\alpha$ and $\gamma-\beta$ are zero or negative integers. We will derive the modified formula in subsection 3.2 by using the connection formula for Gauss hypergeometric function $_2F_1$. The Gauss hypergeometric function $_{2}F_{1}(a,b,c;x)$ is defined as follows.
\begin{defn}[\cite{AAR99} etc.]
  The \textit{Gauss hypergeometric function} $_{2}F_{1}(a,b,c;x)$ is defined by the series
  \[
    \sum_{n=0}^{\infty}\dfrac{(a)_{n}(b)_{n}}{(c)_{n}n!}x^{n}\ \left((a)_n\coloneqq 
    \begin{cases*}
      a(a+1)\cdots(a+n-1)\ (n\geq1)\\
      1\ (n=0)
    \end{cases*}
    \right)
  \]
  for $\left\lvert x\right\rvert<1$, and by continuation elsewhere. Here $a,b,c$ are complex numbers, and $c$ is not a negative integer or zero. 
\end{defn}
We will use the following formula in subsection 3.2.
\begin{lem}[\cite{EMOT53} etc.]
  \label{2F1conn2}
  Let $a$ be the complex number such that $a\neq0,-1,-2,\dots$, and $l,m,n$ be non-negative integers. Then, we obtain
  \begin{align*}
    &_2F_1(\alpha,\alpha+m,\alpha+m+l+1;z)\cdot \dfrac{\Gamma(\alpha+m)}{\Gamma(\alpha+m+l+1)}\\
    =&(-1)^{m+l+1}(-z)^{-a-m}\sum_{n=l+1}^\infty\dfrac{(\alpha)_{n+m}(n-l-1)!}{(n+m)!n!}z^{-n}+(-z)^{-\alpha}\sum_{n=0}^{m-1}\dfrac{(m-n-1)!(\alpha)_n}{(m+l-n)!n!}z^{-n}\\
    &+\dfrac{(-z)^{-\alpha-m}}{(l+m)!}\sum_{n=0}^{l}\dfrac{(\alpha)_{n+m}(-m-l)_{n+m}}{(n+m)!n!}z^{-n}\left[\log(-z)+h_n'\right]
  \end{align*}
  Here, second term of right hand side is $0$ when $m=0$, and we have
  \[
    h_n'=\psi(1+m+n)+\psi(1+n)-\psi(\alpha+m+n)-\psi(l+1-n),
  \]
  and $\psi(z)$ is  the logarithmic derivative of the gamma function, i.e.
  \[
    \psi(z)\coloneqq \dfrac{d}{dz}\log\Gamma(z)=\dfrac{\Gamma'(z)}{\Gamma(z)}.
  \]
\end{lem}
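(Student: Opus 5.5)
The plan is to prove Lemma~\ref{2F1conn2} with the Mellin--Barnes representation of the Gauss function. The reason for this route is that in the degenerate parameter configuration the poles of the integrand collide, and residues at double poles automatically produce the $\log(-z)$ and digamma terms. Write $b=\alpha+m$ and $c=\alpha+m+l+1$. For $|\arg(-z)|<\pi$ I will use
\[
\frac{\Gamma(\alpha)\Gamma(b)}{\Gamma(c)}\,{}_2F_1(\alpha,b,c;z)=\frac{1}{2\pi i}\int_{-i\infty}^{i\infty}\frac{\Gamma(\alpha+s)\Gamma(b+s)\Gamma(-s)}{\Gamma(c+s)}(-z)^{s}\,ds ,
\]
where the contour separates the poles of $\Gamma(-s)$ from those of $\Gamma(\alpha+s)\Gamma(b+s)$. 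Dividing by $\Gamma(\alpha)$ produces exactly the normalisation $\Gamma(\alpha+m)/\Gamma(\alpha+m+l+1)$ on the left-hand side of the lemma. The hypothesis $\alpha\neq 0,-1,-2,\dots$ guarantees that such a contour exists. For $|z|>1$ I will close the contour to the left. Stirling's formula shows that the integrand decays on large left semicircles passing between the poles, so the integral equals minus the sum of residues at the left poles. This is the standard justification, and I would only sketch it.

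Next I classify the left poles, all of which lie at $s=-\alpha-k$ with $k\ge 0$.
\begin{itemize}
\item For $0\le k\le m-1$ only $\Gamma(\alpha+s)$ is singular, so the poles are simple. Their residues give $(-z)^{-\alpha}$ times powers $z^{-n}$; after using the reflection formula for the ratio $\Gamma(m-n)\Gamma(1+\alpha+n-1)/\Gamma(m+l+1-n)$ and related factors, this should reproduce the finite sum $\sum_{n=0}^{m-1}\frac{(m-n-1)!(\alpha)_n}{(m+l-n)!n!}z^{-n}$.
\item For $k=m+n$ with $0\le n\le l$ both $\Gamma(\alpha+s)$ and $\Gamma(b+s)$ are singular, while $1/\Gamma(c+s)$ is finite and nonzero. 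These are double poles. Their residues are derivatives, which produce $\log(-z)$ from $(-z)^s$ and the digamma values $\psi(1+m+n)$, $\psi(1+n)$, $\psi(\alpha+m+n)$, $\psi(l+1-n)$ from the four Gamma factors. This is the origin of $h_n'$ and of the coefficients $(\alpha)_{n+m}(-m-l)_{n+m}/((n+m)!n!(l+m)!)$.
\item For $k=m+n$ with $n\ge l+1$, the factor $1/\Gamma(c+s)$ has a simple zero that cancels one order, so the poles are simple again. Evaluating $1/\Gamma$ near its zero via $\Gamma(\epsilon-j)\approx(-1)^j/(j!\epsilon)$ gives the factor $(n-l-1)!$ and the sign $(-1)^{m+l+1}$ in the infinite sum.
\end{itemize}

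Finally, I rewrite every residue with the identities $\Gamma(\alpha+n)=\Gamma(\alpha)(\alpha)_n$ and $\Gamma(x-j)=(-1)^j\Gamma(x)\Gamma(1-x)/\Gamma(j+1-x)$. I then collect powers of $(-z)^{-1}=-z^{-1}$, converting signs $(-1)^n$ so that everything is expressed in $z^{-n}$. As a consistency check, the result can be compared with \cite{EMOT53}, \S2.10, or obtained alternatively as the limit $b-\alpha\to m$ of the generic connection formula for ${}_2F_1$ via L'Hôpital's rule.

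I expect the main obstacle to be the double-pole computation. The Laurent expansions of four Gamma factors plus $(-z)^s$ must be combined, and the resulting signs and the factorial $(l-n)!$ arising from $\Gamma(c+s)$ must be matched to the compact form $(-m-l)_{n+m}/(l+m)!$. I will handle this by writing $s=-\alpha-m-n+\epsilon$, expanding each factor to first order in $\epsilon$, and reading off the coefficient of $\epsilon^{-1}$.
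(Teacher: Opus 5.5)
The paper gives no proof of this lemma; it imports it from \cite{EMOT53}, where it is the case $c-\alpha=m+l+1$ of the logarithmic continuation formula for ${}_2F_1(\alpha,\alpha+m;c;z)$. Your Mellin--Barnes computation is therefore a genuinely self-contained alternative, and it works: the three pole families give exactly the three sums.

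For $0\le k\le m-1$ the residue at $s=-\alpha-k$ is $\frac{(-1)^k}{k!}\Gamma(m-k)\Gamma(\alpha+k)(-z)^{-\alpha-k}/(m+l-k)!$. Here $(-z)^{-k}=(-1)^kz^{-k}$ absorbs the sign, so no reflection formula is needed. At the double poles $s=-\alpha-m-n+\epsilon$ with $0\le n\le l$, the $\epsilon^{-1}$ coefficient divided by $\Gamma(\alpha)$ is $\frac{(-1)^{m+n}(\alpha)_{m+n}}{(m+n)!\,n!\,(l-n)!}(-z)^{-\alpha-m}z^{-n}\bigl[\log(-z)+h_n'\bigr]$. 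Moreover $(-1)^{m+n}/(l-n)!=(-m-l)_{n+m}/(l+m)!$. For $n\ge l+1$ the zero of $1/\Gamma(c+s)$ produces $(-1)^{m+l+1}(n-l-1)!$, as you predict.

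The arc estimate is easy in this case. After reflection, the trigonometric factor collapses to $(-1)^{l+1}\pi/\sin\pi(\alpha+s)$, because both $b-\alpha$ and $c-\alpha$ are integers. The remaining Gamma ratio behaves like $(-s)^{\alpha-l-2}$.

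Compare this with the citation, or with letting $c-\alpha\to m+l+1$ in the generic-$c$ formula. In that limit the infinite sum only appears through the $0\cdot\infty$ limit $(1-c+\alpha)_{n+m}\psi(c-\alpha-m-n)\to(-1)^{m+l+1}(m+l)!\,(n-l-1)!$. Your route instead treats both integer degeneracies at once and shows where each sum comes from. The price is the contour-shift justification.

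One correction. The upward contour closed by arcs in the left half-plane is positively oriented. So the integral equals plus, not minus, the sum of the residues at the left poles. With your sign every term of the lemma would come out negated. For example, when $m\ge1$ the pole $s=-\alpha$ must contribute $+\frac{(m-1)!}{(m+l)!}(-z)^{-\alpha}$.

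Also note that the lemma as printed omits $|z|>1$ and $|\arg(-z)|<\pi$, and writes $a$ for $\alpha$. You are right to impose these conditions: they are exactly what is needed both for the contour shift and for convergence of the series in $z^{-n}$.
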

In the last of this subsection, we review the integral representation of hypergeometric function $_2F_1$.
\begin{prop}[\cite{AAR99} etc.]
  \label{2F1}
  If $\operatorname{Re} c>\operatorname{Re} b>0$, then
  \[
    _{2}F_{1}(a,b,c;x)=\dfrac{\Gamma(c)}{\Gamma(b)\Gamma(c-b)}\int_{0}^{1}t^{b-1}(1-t)^{c-b-1}(1-xt)^{-a}\,dt
  \]
  in the $x$ plane cut along the real axis from $1$ to $\infty$. Here it is understood that $\arg t=\arg(1-t)=0$ and $(1-xt)^{-a}$ has its principal value.
\end{prop}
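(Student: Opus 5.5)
We first prove the identity for $\left\lvert x\right\rvert<1$ and then extend it to the cut plane by analytic continuation. Fix $x$ with $\left\lvert x\right\rvert<1$ and $t\in[0,1]$. Then $\left\lvert xt\right\rvert\le\left\lvert x\right\rvert<1$, so the principal branch satisfies the binomial expansion
\[
  (1-xt)^{-a}=\sum_{n=0}^{\infty}\dfrac{(a)_{n}}{n!}x^{n}t^{n}.
\]
This series converges uniformly in $t\in[0,1]$, since it is dominated by $\sum_{n}\left\lvert (a)_{n}\right\rvert\left\lvert x\right\rvert^{n}/n!<\infty$. The weight $t^{b-1}(1-t)^{c-b-1}$ is absolutely integrable on $(0,1)$ precisely because $\operatorname{Re}b>0$ and $\operatorname{Re}(c-b)>0$. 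We may therefore integrate term by term and obtain
\[
  \int_{0}^{1}t^{b-1}(1-t)^{c-b-1}(1-xt)^{-a}\,dt=\sum_{n=0}^{\infty}\dfrac{(a)_{n}}{n!}x^{n}B(b+n,c-b).
\]

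Next we evaluate the Beta integrals. Using $B(p,q)=\Gamma(p)\Gamma(q)/\Gamma(p+q)$ together with $\Gamma(b+n)=(b)_{n}\Gamma(b)$ and $\Gamma(c+n)=(c)_{n}\Gamma(c)$, we get
\[
  B(b+n,c-b)=\dfrac{\Gamma(b)\Gamma(c-b)}{\Gamma(c)}\dfrac{(b)_{n}}{(c)_{n}}.
\]
Multiplying by $\Gamma(c)/(\Gamma(b)\Gamma(c-b))$ gives exactly the defining series $\sum_{n}\frac{(a)_{n}(b)_{n}}{(c)_{n}n!}x^{n}$. This proves the claim on the unit disk.

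It remains to extend to the cut plane $\Omega=\mathbb{C}\setminus[1,\infty)$. For $x\in\Omega$ and $t\in[0,1]$, the number $1-xt$ never lies on $(-\infty,0]$. Indeed, $1-xt\le 0$ would force $xt\ge1$ real, hence $x\ge1$. So the principal value $(1-xt)^{-a}$ is holomorphic in $x$ and jointly continuous in $(x,t)$.

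On a compact set $K\subset\Omega$, the quantity $\left\lvert 1-xt\right\rvert$ is bounded away from $0$ and $\arg(1-xt)$ stays in a compact subset of $(-\pi,\pi)$. Hence $\left\lvert(1-xt)^{-a}\right\rvert$ is bounded uniformly on $K\times[0,1]$. Combined with the integrability of the weight, this gives uniform convergence of the integral on $K$. By Morera's theorem (or differentiation under the integral sign), the right-hand side is holomorphic on $\Omega$.

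The region $\Omega$ is connected and contains the unit disk, and the two sides agree on the disk. By the identity theorem, the integral is the analytic continuation of the series to $\Omega$, which by definition is ${}_{2}F_{1}(a,b,c;x)$ there.

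The only delicate point is this last step: one must check the uniform bound on $(1-xt)^{-a}$ near the endpoints $t=0,1$ and confirm that the branch does not jump. Both follow from the observation above that $1-xt$ avoids the negative real axis.
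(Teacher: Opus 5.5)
Your proposal is correct. The paper gives no proof of its own and simply cites \cite{AAR99}, and your argument is the standard one found there: binomial expansion with termwise Beta integrals for $|x|<1$, then analytic continuation to $\mathbb{C}\setminus[1,\infty)$ using the locally uniform bound on the principal branch of $(1-xt)^{-a}$.
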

We will use this formula to calculate the ratio of sides of quadrilaterals in subsection 3.3. We also use the following connection formula for $_2F_1$.
\begin{prop}[\cite{EMOT53} etc.]
  \label{2F1conn1}
  Let $a,b$ be complex numbers such that $a,b\neq 0,-1,-2,\dots$, and $m$ be a positive integer. Then, we obtain
  \begin{align*}
    &_2F_1(a,b,a+b+m;z)/\Gamma(a+b+m)\\
    &=\dfrac{\Gamma(m)}{\Gamma(a+m)\Gamma(b+m)}\sum_{n=0}^{m-1}\dfrac{(a)_n(b)_n}{(1-m)_nn!}(1-z)^n\\
    &+\dfrac{(1-z)^m(-1)^m}{\Gamma(a)\Gamma(b)}\sum_{n=0}^{\infty}\dfrac{(a+m)_n(b+m)_n}{(n+m)!n!}\left[h''_n-\log(1-z)\right](1-z)^n.
  \end{align*}
  Here, the first term of right hand side is $0$ when $m=0$. We have
  \[
    h''_n=\psi(n+1)+\psi(n+m+1)-\psi(a+n+m)-\psi(b+n+m).
  \]
\end{prop}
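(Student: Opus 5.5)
We will derive the formula from the generic connection formula for $_2F_1$ around $z=1$, by perturbing the third parameter and passing to the limit. For $c\notin a+b+\mathbb{Z}$ and $|1-z|<1$ we have the classical identity
\begin{align*}
  \frac{{}_2F_1(a,b,c;z)}{\Gamma(c)}
  &=\frac{\Gamma(c-a-b)}{\Gamma(c-a)\Gamma(c-b)}\,{}_2F_1(a,b,a+b-c+1;1-z)\\
  &\quad+(1-z)^{c-a-b}\frac{\Gamma(a+b-c)}{\Gamma(a)\Gamma(b)}\,{}_2F_1(c-a,c-b,c-a-b+1;1-z).
\end{align*}
We put $c=a+b+m+\delta$ with $0<|\delta|$ small. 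The left-hand side, ${}_2F_1(a,b,c;z)/\Gamma(c)$, is entire in $c$, so it tends to ${}_2F_1(a,b,a+b+m;z)/\Gamma(a+b+m)$ as $\delta\to0$. It therefore suffices to show that the right-hand side has a finite limit and to compute it. The identity, once proved for $|1-z|<1$, extends to the cut plane by analytic continuation, since both sides are analytic there with the principal branch of $\log(1-z)$.

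\emph{The first term.} Its coefficients are $(a)_n(b)_n/((1-m-\delta)_n n!)$. For $0\le n\le m-1$ the Pochhammer symbol $(1-m-\delta)_n$ stays nonzero at $\delta=0$. These terms converge directly to
\[
\frac{\Gamma(m)}{\Gamma(a+m)\Gamma(b+m)}\sum_{n=0}^{m-1}\frac{(a)_n(b)_n}{(1-m)_n n!}(1-z)^n,
\]
which is the first sum in the statement. For $n=m+k$ with $k\ge0$ we factor
\[
(1-m-\delta)_{m+k}=(1-m-\delta)_{m-1}\,(-\delta)\,(1-\delta)_k ,
\qquad
(a)_{m+k}=(a)_m(a+m)_k ,
\]
and similarly for $b$. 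Using $(a)_m\Gamma(a)=\Gamma(a+m)$, each such term is $1/\delta$ times a function holomorphic in $\delta$ at $0$.

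\emph{The second term.} Here $\Gamma(-m-\delta)=(-1)^{m+1}\bigl(1/\delta+\psi(m+1)+O(\delta)\bigr)/m!$ supplies a simple pole. The power $(1-z)^{m+\delta}=(1-z)^m\bigl(1+\delta\log(1-z)+O(\delta^2)\bigr)$ and the series coefficients $(a+m+\delta)_k(b+m+\delta)_k/((1+m+\delta)_k k!)$ are holomorphic in $\delta$.

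\emph{Matching.} We match the $k$-th coefficient of $(1-z)^{m+k}$ between the two terms. The $1/\delta$ parts should cancel, and the sign check $(-1)^{m-1}(-1)\cdot(-1)^{m+1}$ is where this must be confirmed. The constant parts then give $(-1)^m(1-z)^{m+k}(a+m)_k(b+m)_k/\bigl(\Gamma(a)\Gamma(b)(m+k)!\,k!\bigr)$ multiplied by the sum of the $\delta$-derivatives of the logarithms of the $\delta$-dependent factors. These derivatives are as follows:
\begin{itemize}
\item $\Gamma(m+\delta)/(1-m-\delta)_{m-1}$ together with $(1-\delta)_k$ give $\psi(k+1)$;
\item $\Gamma(a+m+\delta)^{-1}$ and $\Gamma(b+m+\delta)^{-1}$, together with $\Gamma(-m-\delta)$ and $(a+m+\delta)_k(b+m+\delta)_k/(1+m+\delta)_k$, give $\psi(k+m+1)-\psi(a+k+m)-\psi(b+k+m)$;
\item $(1-z)^{\delta}$ gives $-\log(1-z)$ after the sign bookkeeping.
\end{itemize}
Together these produce $h''_k-\log(1-z)$. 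We will use $\frac{d}{d\delta}\log(x+\delta)_k=\psi(x+k)-\psi(x)$ systematically, so that the $\psi(a+m)$ and $\psi(b+m)$ contributions from the $\Gamma$-prefactors combine with those from the Pochhammer symbols into $\psi(a+k+m)$ and $\psi(b+k+m)$.

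\emph{Expected difficulty.} The main obstacle is this bookkeeping of signs and $\psi$-terms, in particular verifying exactly that the residues cancel and that the $\psi(m)$-type terms telescope to the stated $h''_n$. Uniformity of the limit in $z$ is unproblematic, because all series converge locally uniformly for $|1-z|<1$ with coefficients analytic in $\delta$. As a consistency check, we will compare the case $m=0$ with the well-known logarithmic formula for ${}_2F_1(a,b,a+b;z)$.
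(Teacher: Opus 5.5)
Your route (perturb $c=a+b+m+\delta$ in the generic connection formula and let $\delta\to0$) is the standard derivation of this logarithmic case; the paper gives no proof and only cites the literature. Most of your bookkeeping is right:
\begin{itemize}
\item The $1/\delta$ parts do cancel.
\item $\Gamma(m+\delta)/(1-m-\delta)_{m-1}=(-1)^{m-1}\Gamma(1+\delta)$ and $1/(1-\delta)_k$ together contribute $\psi(1)+\bigl(\psi(k+1)-\psi(1)\bigr)=\psi(k+1)$.
\item The $\psi(a+m),\psi(b+m)$ from the $\Gamma$-prefactor of the first term cancel against those from $(a+m+\delta)_k(b+m+\delta)_k$ in the second.
\end{itemize}
However, the Laurent expansion you wrote for $\Gamma(-m-\delta)$ has the wrong constant term. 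Substituting $\eta=-\delta$ in $\Gamma(-m+\eta)=\frac{(-1)^m}{m!}\bigl(\frac{1}{\eta}+\psi(m+1)+O(\eta)\bigr)$ gives
\[
\Gamma(-m-\delta)=\frac{(-1)^{m+1}}{m!}\Bigl(\frac{1}{\delta}-\psi(m+1)+O(\delta)\Bigr).
\]
As a check, for $m=0$ this is $\Gamma(-\delta)=-1/\delta-\gamma+O(\delta)$ with $\gamma$ Euler's constant; your version would give $-1/\delta+\gamma$.

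This sign is not cosmetic. Write the $k$-th paired coefficients as $A_1(\delta)/\delta$ (first term) and $A_2(\delta)/\delta$ (second term), with
\[
A_1(0)=-A_2(0)=R_k:=\frac{(-1)^m(a+m)_k(b+m)_k}{\Gamma(a)\Gamma(b)\,k!\,(m+k)!}(1-z)^{m+k}.
\]
The finite part is then $R_k$ times the \emph{difference} $(\log A_1)'(0)-(\log A_2)'(0)$. In that difference:
\begin{itemize}
\item the factor $1/(1+m+\delta)_k$ contributes $\psi(m+k+1)-\psi(m+1)$;
\item $\Gamma(-m-\delta)$ contributes minus the constant term inside its bracket.
\end{itemize}
With the correct expansion that contribution is $+\psi(m+1)$, and you get exactly $h''_k-\log(1-z)$. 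With your expansion you would be left with a spurious $-2\psi(m+1)$ in $h''_k$. So the bullet you state is the right outcome, but it does not follow from the expansion you propose to use; fix that expansion and the computation goes through.

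Two smaller points. First, the interchange of $\lim_{\delta\to0}$ with the sum over $k$ needs a bound uniform in $\delta$ for fixed $z$, not uniformity in $z$. Each paired coefficient is holomorphic in $|\delta|<r_0$ once the removable singularity is filled in. By the maximum principle it is bounded by its maximum on $|\delta|=r$, where crude estimates give $C\,r^{-1}k^{N}|1-z|^{k}$; this gives dominated convergence for $|1-z|<1$. Second, your remark about extending to the cut plane is vacuous: the series on the right converge only for $|1-z|<1$, so the identity is a statement on $\{|1-z|<1,\ |\arg(1-z)|<\pi\}$.
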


\subsection{Interior angles in convex polygons.}
In order to discuss the convergence of holomorphic disks $w_{\epsilon}$, we prepare the explicit formula of the angles $\pi\alpha_{i}^{\epsilon}$ at $x_{i}^{\epsilon}$ and their properties. The arguments in this subsection are elementary.
\begin{lem}[\cite{S25}]
  \label{angles}
  Let $k$ be a positive integer. Let $L_{i}^{\epsilon}=\{(x,y)\in\mathbb{R}^{2}\,|\,y=(a_{i}x+b_{i})\epsilon\}$ be straight lines in $\mathbb{R}^{2}$ for $i=1,2,\dots,k$. We assume that $L_{i}^{\epsilon}\cap L_{i+1}^{\epsilon}\neq \emptyset,L_{i}^{\epsilon}\neq L_{i+1}^{\epsilon}, L_{i}^{\epsilon}\cap L_{i+1}^\epsilon\cap L_{i+2}^{\epsilon}=\emptyset$ for $i=1,2,\dots,k$. We denote by $x_{i}^{\epsilon}=(p_{i},q_{i})$ the intersection point of $L_{i}^{\epsilon}$ and $L_{i+1}^{\epsilon}$. Let $P_{\epsilon}$ be the convex polygon having vertices $x_{1}^{\epsilon},x_{2}^{\epsilon},\dots,x_{k}^{\epsilon}$ in counterclockwise order, and let $\pi\alpha_{i}^{\epsilon}$ be the interior angle at $x_{i}^{\epsilon}$. Then we obtain
  \begin{gather*}
    \pi\alpha_{i}^{\epsilon}=\left\lvert \arctan a_{i+1}\epsilon-\arctan a_{i}\epsilon\right\rvert\ \ \ \text{if $(p_{i+1}-p_{i})(p_{i-1}-p_{i})>0$,}\\
    \pi\alpha_{i}^{\epsilon}=\pi-\left\lvert \arctan a_{i+1}\epsilon-\arctan a_{i}\epsilon\right\rvert\ \ \ \text{if $(p_{i+1}-p_{i})(p_{i-1}-p_{i})<0$.}
  \end{gather*}
\end{lem}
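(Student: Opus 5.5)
Since $L_i^\epsilon$ has slope $a_i\epsilon$, its direction angle with the positive $x$-axis is $\theta_i\coloneqq\arctan a_i\epsilon\in(-\pi/2,\pi/2)$. At the vertex $x_i^\epsilon=L_i^\epsilon\cap L_{i+1}^\epsilon$ two sides of $P_\epsilon$ meet, and the two lines cross at the unsigned angle $\left\lvert\theta_{i+1}-\theta_i\right\rvert$. This angle lies in $(0,\pi)$ because the lines are distinct and $\left\lvert\theta_{i+1}-\theta_i\right\rvert<\pi$. The interior angle $\pi\alpha_i^\epsilon$ is therefore either $\left\lvert\theta_{i+1}-\theta_i\right\rvert$ or $\pi-\left\lvert\theta_{i+1}-\theta_i\right\rvert$. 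The proof consists of deciding which of the two occurs, in terms of the relative position of the neighbouring vertices.

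The side of $P_\epsilon$ on $L_i^\epsilon$ is the segment $x_{i-1}^\epsilon x_i^\epsilon$, since $x_{i-1}^\epsilon=L_{i-1}^\epsilon\cap L_i^\epsilon$. The side on $L_{i+1}^\epsilon$ is $x_i^\epsilon x_{i+1}^\epsilon$. Thus the interior angle at $x_i^\epsilon$ is the angle between the rays $x_i^\epsilon+s(x_{i-1}^\epsilon-x_i^\epsilon)$ and $x_i^\epsilon+s(x_{i+1}^\epsilon-x_i^\epsilon)$ for $s\ge0$. Neither line is vertical, so each ray is determined by the sign of its horizontal component, namely $p_{i-1}-p_i$ and $p_{i+1}-p_i$. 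These are nonzero because $L_{i-1}^\epsilon\cap L_i^\epsilon\cap L_{i+1}^\epsilon=\emptyset$, so $x_{i-1}^\epsilon\neq x_i^\epsilon\neq x_{i+1}^\epsilon$, and points on a non-vertical line are determined by their $p$-coordinate.

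The rays point in the directions $\sigma_-(\cos\theta_i,\sin\theta_i)$ and $\sigma_+(\cos\theta_{i+1},\sin\theta_{i+1})$, where $\sigma_\pm=\operatorname{sgn}(p_{i\pm1}-p_i)$. The angle between two unit vectors $u,v$ is $\arccos(u\cdot v)$. Here
\[
  u\cdot v=\sigma_-\sigma_+\cos(\theta_{i+1}-\theta_i).
\]
\begin{itemize}
  \item If $(p_{i+1}-p_i)(p_{i-1}-p_i)>0$, then $\sigma_-\sigma_+=1$ and the angle is $\left\lvert\theta_{i+1}-\theta_i\right\rvert$.
  \item If the product is negative, then $u\cdot v=-\cos(\theta_{i+1}-\theta_i)$ and the angle is $\pi-\left\lvert\theta_{i+1}-\theta_i\right\rvert$.
\end{itemize}

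Since $P_\epsilon$ is convex, its interior angle at $x_i^\epsilon$ is less than $\pi$ and equals the angle between the two edge rays, which is exactly the quantity computed. This gives both formulas of the lemma.

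The only delicate point is justifying that the interior angle equals the (unoriented, less than $\pi$) angle between the two edge rays. That is exactly where convexity and the counterclockwise ordering enter. Apart from that, the argument is elementary and needs no earlier result of the paper.
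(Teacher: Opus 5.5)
Your proof is correct. You write the edge vectors at $x_i^\epsilon$ as $(p_{i-1}-p_i)(1,a_i\epsilon)$ and $(p_{i+1}-p_i)(1,a_{i+1}\epsilon)$ and compute the angle between them with the dot product. That is a complete version of the elementary argument the paper has in mind: it cites \cite{S25} for this lemma and does not reproduce a proof. One small remark: only convexity is actually used, to identify the interior angle with the unoriented angle in $(0,\pi)$ between the two edge rays; the counterclockwise ordering plays no role.
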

\begin{cor}[\cite{S25}]
  \label{angleslim}
  We assume the setting in Lemma \ref{angles}. Then we obtain
  \[
    \lim_{\epsilon\to+0}\alpha_{i}^{\epsilon}=0, \lim_{\epsilon\to+0}\dfrac{\pi\alpha_{i}^{\epsilon}}{\epsilon\left\lvert a_{i+1}-a_{i}\right\rvert}=1,\lim_{\epsilon\to+0}\epsilon\Gamma(\alpha_{i}^{\epsilon})=\dfrac{\pi}{\left\lvert a_{i+1}-a_{i}\right\rvert}
  \]
  in the case $(p_{i+1}-p_{i})(p_{i-1}-p_{i})>0$, and 
  \[
    \lim_{\epsilon\to+0}\alpha_{i}^{\epsilon}=1, \lim_{\epsilon\to+0}\dfrac{\pi(1-\alpha_{i}^{\epsilon})}{\epsilon\left\lvert a_{i+1}-a_{i}\right\rvert}=1,\lim_{\epsilon\to+0}\epsilon\Gamma(1-\alpha_{i}^{\epsilon})=\dfrac{\pi}{\left\lvert a_{i+1}-a_{i}\right\rvert}
  \]
  in the case $(p_{i+1}-p_{i})(p_{i-1}-p_{i})<0$.
\end{cor}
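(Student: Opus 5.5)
The plan is to read everything off the explicit formula of Lemma \ref{angles}, using only the first-order behaviour of $\arctan$ at the origin and the functional equation of the gamma function. First I check that the case distinction does not depend on $\epsilon$. The intersection point $x_i^\epsilon=(p_i,q_i)$ of $L_i^\epsilon$ and $L_{i+1}^\epsilon$ solves $(a_ix+b_i)\epsilon=(a_{i+1}x+b_{i+1})\epsilon$. Hence $p_i=-(b_{i+1}-b_i)/(a_{i+1}-a_i)$, which is independent of $\epsilon>0$. So the sign of $(p_{i+1}-p_i)(p_{i-1}-p_i)$ is fixed, and for every $\epsilon$ we stay in the same branch of Lemma \ref{angles}. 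The hypotheses $L_i^\epsilon\cap L_{i+1}^\epsilon\neq\emptyset$ and $L_i^\epsilon\neq L_{i+1}^\epsilon$ force $a_{i+1}\neq a_i$, so the quantity $\left\lvert a_{i+1}-a_i\right\rvert$ in the denominators is nonzero.

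Next I estimate the angle. By the mean value theorem,
\[
\arctan a_{i+1}\epsilon-\arctan a_i\epsilon=\frac{(a_{i+1}-a_i)\epsilon}{1+\xi_\epsilon^2}
\]
for some $\xi_\epsilon$ between $a_i\epsilon$ and $a_{i+1}\epsilon$, so $\xi_\epsilon\to0$. Therefore
\[
\frac{\left\lvert \arctan a_{i+1}\epsilon-\arctan a_i\epsilon\right\rvert}{\epsilon\left\lvert a_{i+1}-a_i\right\rvert}=\frac{1}{1+\xi_\epsilon^2}\longrightarrow1 .
\]
In the case $(p_{i+1}-p_i)(p_{i-1}-p_i)>0$, the left-hand side is exactly $\pi\alpha_i^\epsilon/(\epsilon\left\lvert a_{i+1}-a_i\right\rvert)$, which gives the second limit. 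Multiplying back by $\epsilon\left\lvert a_{i+1}-a_i\right\rvert/\pi\to0$ gives $\alpha_i^\epsilon\to0$. In the case $(p_{i+1}-p_i)(p_{i-1}-p_i)<0$, the same computation applies verbatim to $\pi(1-\alpha_i^\epsilon)$, giving $1-\alpha_i^\epsilon\to0$ and the corresponding ratio limit.

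For the gamma limits I use $\Gamma(\alpha)=\Gamma(1+\alpha)/\alpha$. Writing $\beta_\epsilon=\alpha_i^\epsilon$ in the first case and $\beta_\epsilon=1-\alpha_i^\epsilon$ in the second, this gives
\[
\epsilon\Gamma(\beta_\epsilon)=\Gamma(1+\beta_\epsilon)\cdot\frac{\epsilon}{\beta_\epsilon}
=\Gamma(1+\beta_\epsilon)\cdot\frac{\pi}{\left\lvert a_{i+1}-a_i\right\rvert}\cdot\frac{\epsilon\left\lvert a_{i+1}-a_i\right\rvert}{\pi\beta_\epsilon}.
\]
Since $\beta_\epsilon\to0$ and $\Gamma$ is continuous at $1$ with $\Gamma(1)=1$, the first factor tends to $1$, and the last factor tends to $1$ by the previous step. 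Hence $\epsilon\Gamma(\beta_\epsilon)\to\pi/\left\lvert a_{i+1}-a_i\right\rvert$.

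There is no real obstacle here. The only point needing care is the first one: observing that $p_i$ is independent of $\epsilon$, so the case split is uniform in $\epsilon$, and that $a_{i+1}\neq a_i$, so the normalisation by $\left\lvert a_{i+1}-a_i\right\rvert$ is legitimate.
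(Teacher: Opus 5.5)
Your proof is correct and is essentially the intended derivation. The paper gives no separate argument for this corollary; it is read off directly from the explicit angle formula of Lemma \ref{angles}, using $\arctan t\sim t$ as $t\to0$ (your mean value theorem step) and $\Gamma(\beta)=\Gamma(1+\beta)/\beta\sim1/\beta$. Your preliminary checks make the case split and the normalisation legitimate: $p_i=-(b_{i+1}-b_i)/(a_{i+1}-a_i)$ does not depend on $\epsilon$, and $a_{i+1}\neq a_i$.
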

Since $\arctan x-x$ is monotonically decreasing, we obtain the following lemma.
\begin{lem}[\cite{S25}]
  \label{anglesineq}
  If we assume the setting in Lemma \ref{angles}, then we obtain $\pi\alpha_{i}^{\epsilon}-\left\lvert (a_{i+1}-a_{i})\epsilon\right\rvert<0$ in the case $(p_{i+1}-p_{i})(p_{i-1}-p_{i})>0$.
\end{lem}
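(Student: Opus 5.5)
The plan is to reduce the statement to a strict Lipschitz-type inequality for $\arctan$ with constant $1$, following the remark before the lemma that $x\mapsto\arctan x-x$ is monotonically decreasing.

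First, I will fix $i$ with $(p_{i+1}-p_i)(p_{i-1}-p_i)>0$. By Lemma \ref{angles}, in this case
\[
  \pi\alpha_i^\epsilon=\left\lvert \arctan a_{i+1}\epsilon-\arctan a_i\epsilon\right\rvert .
\]
Next I need $a_{i+1}\neq a_i$. The hypotheses of Lemma \ref{angles} give $L_i^\epsilon\cap L_{i+1}^\epsilon\neq\emptyset$ and $L_i^\epsilon\neq L_{i+1}^\epsilon$. If $a_{i+1}=a_i$, the two lines $y=(a_ix+b_i)\epsilon$ and $y=(a_{i+1}x+b_{i+1})\epsilon$ would be parallel and distinct, hence disjoint, which is a contradiction. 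So $u\coloneqq\max\{a_i\epsilon,a_{i+1}\epsilon\}$ and $v\coloneqq\min\{a_i\epsilon,a_{i+1}\epsilon\}$ satisfy $u>v$, and $\pi\alpha_i^\epsilon=\arctan u-\arctan v$ because $\arctan$ is increasing.

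The key step is the strict monotonicity of $g(x)=\arctan x-x$. Its derivative is
\[
  g'(x)=\frac{1}{1+x^2}-1=-\frac{x^2}{1+x^2}\le 0 ,
\]
with equality only at the single point $x=0$. Hence $g$ is strictly decreasing on $\mathbb{R}$. Since $u>v$, we get $g(u)<g(v)$, that is,
\[
  \arctan u-\arctan v<u-v=\left\lvert (a_{i+1}-a_i)\epsilon\right\rvert ,
\]
which is exactly $\pi\alpha_i^\epsilon-\left\lvert (a_{i+1}-a_i)\epsilon\right\rvert<0$. Equivalently, one can write $\arctan u-\arctan v=\int_v^u dt/(1+t^2)$ and note that the integrand is strictly less than $1$ except at $t=0$ on a nondegenerate interval.

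The only point needing care is strictness. This requires both $a_i\neq a_{i+1}$, which comes from the intersection hypothesis, and the fact that $g'$ vanishes only at one point, so $g$ is strictly rather than merely weakly decreasing. Everything else is direct.
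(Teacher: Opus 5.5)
Your proposal is correct and takes the same route as the paper, which obtains the lemma in one line from the monotonic decrease of $\arctan x-x$ applied to $\pi\alpha_i^\epsilon=\lvert\arctan a_{i+1}\epsilon-\arctan a_i\epsilon\rvert$ from Lemma \ref{angles}. Your additional checks fill in details the paper leaves implicit: that $a_i\neq a_{i+1}$ follows from the intersection hypotheses, and that the decrease is strict because $g'$ vanishes only at $0$.
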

We use Lemma \ref{anglesineq} to show the uniform convergence of holomorphic disks to gradient trees at the external edges of a tree.
\subsection{Conformal Modulus of Quadrilaterals.}
In this subsection, we review a conformal invariant, called the conformal modulus, for general quadrilaterals. We also discuss some monotonicity properties of the invariant. 
\begin{defn}[\cite{Hen86} etc.]
  A \textit{quadrilateral} is a system $(Q;a_{1}, a_{2}, a_{3}, a_{4})$, where $Q$ is a Jordan region and where the $a_{i}$ are four distinct points on the boundary of $Q$, arranged in the sense of increasing parameter values.
  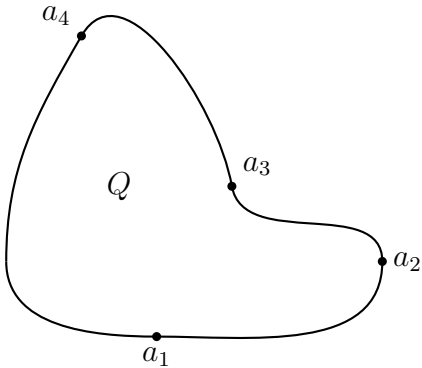
\begin{figure}[ht]
    \centering
    \begin{tikzpicture}
      \draw [thick] (6,3) to [out=-80, in=90] (8,2);
      \draw [thick] (8,2) to [out=-90, in=0] (5,1);
      \draw [thick] (5,1) to [out=180, in=-90] (3,2);
      \draw [thick] (3,2) to [out=90, in=-120] (4,5);
      \draw [thick] (4,5) to [out=60, in=100] (6,3);
      \fill[black] (5,1) circle (0.06) node[below]{$a_{1}$};
      \fill[black] (8,2) circle (0.06) node[right]{$a_{2}$};
      \fill[black] (6,3) circle (0.06) node[above right]{$a_{3}$};
      \fill[black] (4,5) circle (0.06) node[above left]{$a_{4}$};
      \draw (4.5,3) node{$Q$};
    \end{tikzpicture}
    \caption{The quadrilateral (Reproduced from \cite{S25}).}
  \end{figure}
\end{defn}
The word ``quadrilateral'' usually means a polygon which is bounded by four straight lines. Since we consider a quadrilateral $(Q;a_{1}, a_{2}, a_{3}, a_{4})$ as a polygon in Section 3, we use the term ``quadrilateral'' in the sense above only in this subsection. The conformal modulus of the quadrilateral is defined as follows.
\begin{defn}[\cite{Hen86} etc.]
  Let $(Q;a,b,c,d)$ be a quadrilateral. Let $w=f(z)$ be a one-to-one conformal map from the domain $Q$ onto the rectangle $0<u<1,0<v<M\,(w=u+iv)$ which maps $a,b,c,d$ to $0,1,1+iM,iM$, respectively. The number $M$ is called the \textit{conformal modulus} of the quadrilateral $(Q;a,b,c,d)$, and we denote it by $M(Q;a,b,c,d)$.
\end{defn}
Note that $M(Q;a,b,c,d)=1/M(Q;b,c,d,a)$. There are some well-known inequalities of conformal modulus as follows. 
\begin{prop}[\cite{Hen86} etc.]
  \label{mpocm1}
  Let $(Q; a, b, c, d)$ be a quadrilateral, and let $(Q; a', b, c, d)$ be quadrilateral obtained from the first by moving the point $a$ along the boundary of $Q$ in the direction of $d$. Denoting the moduli of the two quadrilaterals by $\mu$ and $\mu'$, respectively, we have $\mu>\mu'$.
\end{prop}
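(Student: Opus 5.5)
We will prove Proposition \ref{mpocm1} by comparing the two quadrilaterals through a single conformal map to the upper half plane and reducing the claim to the monotonicity of one explicit real function. By the Riemann mapping theorem and Carathéodory's extension theorem, there is a conformal homeomorphism $\varphi$ from $\overline{Q}$ onto $\overline{\mathbb{H}}\cup\{\infty\}$. We normalize it so that $\varphi(b)=0$, $\varphi(c)=1$, $\varphi(d)=\infty$. The points of $\partial Q$ met while moving from $d$ through $a$ to $b$ (the arc not containing $c$) then go to the negative real axis. Hence $\varphi(a)=-\lambda$ with $\lambda>0$, and, since $a'$ lies strictly between $a$ and $d$ on that arc, $\varphi(a')=-\lambda'$ with $\lambda'>\lambda$. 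Because the modulus is a conformal invariant, it is enough to compute $M(\mathbb{H};-\lambda,0,1,\infty)$ and show that it is strictly decreasing in $\lambda$.

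For the computation we use the Schwarz--Christoffel map of Theorem \ref{SCmap}, with all four angles equal to $\pi/2$:
\[
  F_\lambda(z)=\int_{-\lambda}^{z}\bigl(\zeta(\zeta+\lambda)(\zeta-1)\bigr)^{-1/2}\,d\zeta .
\]
This maps $\mathbb{H}$ onto a rectangle with vertices $F_\lambda(-\lambda)$, $F_\lambda(0)$, $F_\lambda(1)$, $F_\lambda(\infty)$. The side from $-\lambda$ to $0$ has length
\[
  K_1(\lambda)=\int_{-\lambda}^{0}\frac{dt}{\sqrt{|t|(t+\lambda)(1-t)}},
\]
and the side from $0$ to $1$ has length
\[
  K_2(\lambda)=\int_{0}^{1}\frac{dt}{\sqrt{t(t+\lambda)(1-t)}}.
\]
After scaling so that the side $ab$ has length $1$, the modulus is $\mu(\lambda)=K_2(\lambda)/K_1(\lambda)$. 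Both integrals could also be written as ${}_2F_1$'s through Proposition \ref{2F1}, but this is not needed.

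Monotonicity now follows from elementary comparisons of the integrands.
\begin{itemize}
  \item In $K_2$, the factor $(t+\lambda)^{-1/2}$ is strictly decreasing in $\lambda$, so $K_2$ is strictly decreasing.
  \item In $K_1$, substitute $t=-\lambda s$ to get $K_1(\lambda)=\int_0^1 \frac{ds}{\sqrt{s(1-s)(1+\lambda s)}}$. Its integrand $(1+\lambda s)^{-1/2}$ is strictly decreasing in $\lambda$, so $K_1$ is strictly decreasing too.
\end{itemize}
Since both are decreasing, the monotonicity of the ratio is not yet settled. To fix this, we change the normalization: we send $a,b,d$ to $0,1,\infty$ instead, so that $c\mapsto \nu>1$ and moving $a$ toward $d$ makes $\nu$ decrease, by the cross-ratio. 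In the rectangle picture, the side $ab$ then corresponds to $\int_0^1 \frac{dt}{\sqrt{t(1-t)(\nu-t)}}$, which is decreasing in $\nu$. The side $bc$ corresponds to $\int_1^\nu\frac{dt}{\sqrt{t(t-1)(\nu-t)}}=\int_0^1\frac{ds}{\sqrt{s(1-s)(1+(\nu-1)s)}}$ (substituting $t=1+(\nu-1)s$), which is also decreasing in $\nu$.

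\textbf{Main obstacle.} This is again a ratio of two monotone quantities, so neither normalization settles the sign by itself. We will therefore avoid the computation and use the extremal-length characterization, which is a standard fact about the modulus. Namely, $\mu=M(Q;a,b,c,d)$ is the extremal distance in $Q$ between the arcs $(a,b)$ and $(c,d)$, which gives the rectangle's height $M$ when its width is $1$. Moving $a$ toward $d$ enlarges the arc $(a',b)$ and shrinks the arc $(c,d)$ only by none. Curves joining $(a,b)$ to $(c,d)$ still join $(a',b)$ to $(c,d)$, so the curve family grows and the extremal distance weakly decreases: $\mu\ge\mu'$.

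Strictness is the delicate part. We will get it from the uniqueness of the extremal metric, which is $|F'|$ for the rectangle map. If equality held, the extremal metric for the smaller family would also be extremal for the larger one. Then the curves of the larger family starting on the arc between $a$ and $a'$ would have to have $|F'|$-length at least $M$. But the rectangle map $F$ sends that arc into the side $F((d,a))$, from which curves of length less than $M$ reach $F((c,d))$ near the corner $F(d)$. This contradiction gives $\mu>\mu'$.
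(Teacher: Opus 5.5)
The paper does not prove this proposition; it quotes it from Henrici. Your argument therefore has to stand on its own, and in substance it does.

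The weak inequality is the standard comparison principle. $\mu$ is the extremal distance between $(a,b)$ and $(c,d)$, and the family $\Gamma$ of curves joining $(a,b)$ to $(c,d)$ is contained in the family $\Gamma'$ of curves joining $(a',b)$ to $(c,d)$.

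In the strictness step, however, the sentence ``the extremal metric for the smaller family would also be extremal for the larger one'' does not follow from $\Gamma\subset\Gamma'$. What follows directly is the converse. Let $F_{a'}$ denote the rectangle map of $(Q;a',b,c,d)$. Since $L_\rho(\Gamma)\ge L_\rho(\Gamma')$ for every $\rho$, the equality $\lambda(\Gamma)=\lambda(\Gamma')$ makes the extremal metric $|F_{a'}'|$ of $\Gamma'$ extremal for $\Gamma$. Uniqueness for $\Gamma$ then gives $|F_{a'}'|=|F'|$ a.e., hence everywhere by continuity. Only at that point do all curves of $\Gamma'$ have $|F'|$-length at least $M$, and your short curves from $F((a',a))$ up to the top side give the contradiction.

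You can also avoid extremal metrics in this step. If $\mu=\mu'$, then $F_{a'}\circ F^{-1}$ is a conformal self-map of the same rectangle fixing the three vertices $F(b),F(c),F(d)$. It is therefore the identity, which forces $F(a')=F(a)$, impossible since $a\neq a'$.

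The Schwarz--Christoffel computation you abandoned also closes in one line once you factor out the common $(1+\lambda)^{-1/2}$:
\[
  \sqrt{1+\lambda}\,K_1(\lambda)=\int_0^1\sqrt{\frac{1+\lambda}{1+\lambda s}}\,\frac{ds}{\sqrt{s(1-s)}},\qquad
  \sqrt{1+\lambda}\,K_2(\lambda)=\int_0^1\sqrt{\frac{1+\lambda}{t+\lambda}}\,\frac{dt}{\sqrt{t(1-t)}}.
\]
The first is strictly increasing in $\lambda$ and the second strictly decreasing. Hence $\mu(\lambda)=K_2(\lambda)/K_1(\lambda)$ is strictly decreasing, with strictness included. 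It equals $K(k')/K(k)$ with $k^2=\lambda/(1+\lambda)$.

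That route is elementary and matches the hypergeometric tools used in the paper, but it is tied to this one-parameter family. Your extremal-length route is more conceptual. Applied to the conjugate family joining $(b,c)$ to $(d,a)$, the same inclusion principle also gives the non-strict form of Proposition~\ref{mpocm2}.
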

\begin{prop}[\cite{Hen86} etc.]
  \label{mpocm2}
  Let $(Q; a, b, c, d)$ and $(Q; a, b, c, d)$ be quadrilaterals, which have the boundary segments $(b, c)$ and $(d, a)$ in common, but which are such that $Q\subset  Q'$. Denoting the moduli of the two quadrilaterals by $\mu$ and $\mu'$, respectively, we have $\mu<\mu'$.
\end{prop}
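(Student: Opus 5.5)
The plan is to characterise the conformal modulus by a Dirichlet (variational) principle and then compare admissible functions on $Q$ and $Q'$. Let $w=f(z)$ map $Q$ conformally onto the rectangle $0<u<1$, $0<v<\mu$, sending $a,b,c,d$ to $0,1,1+i\mu,i\mu$. Then the side $(d,a)$ goes to $\{u=0\}$ and the side $(b,c)$ goes to $\{u=1\}$. Put $h=\operatorname{Re} f$. This function is harmonic in $Q$, equals $0$ on $(d,a)$ and $1$ on $(b,c)$, and satisfies $\partial h/\partial n=0$ on the remaining arcs $(a,b)$ and $(c,d)$. By conformal invariance of the Dirichlet integral,
\[
D_{Q}(h)=\iint_{Q}\lvert\nabla h\rvert^{2}\,dx\,dy=\iint_{\text{rectangle}}1\,du\,dv=\mu .
\]

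The first step is the Dirichlet principle in this mixed setting. We claim that
\[
\mu=\min\{D_{Q}(v)\},
\]
where $v$ ranges over $W^{1,2}(Q)$ with $v=0$ on $(d,a)$ and $v=1$ on $(b,c)$, and no condition is imposed on $(a,b)$ or $(c,d)$. Since the Dirichlet integral is conformally invariant, it suffices to verify this on the rectangle. There, for any admissible $v$, Cauchy--Schwarz along each horizontal segment gives
\[
1\le\Bigl(\int_{0}^{1}\lvert\partial_{u}v\rvert\,du\Bigr)^{2}\le\int_{0}^{1}\lvert\partial_{u}v\rvert^{2}\,du .
\]
Integrating in $v$ from $0$ to $\mu$ yields $D\ge\mu$, with equality for $v=u$. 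This is the standard argument; alternatively one can use extremal length, since $1/\mu$ is the extremal length of the family of curves in $Q$ joining $(b,c)$ to $(d,a)$.

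Next, let $h'$ be the corresponding extremal function for $(Q';a,b,c,d)$, so that $D_{Q'}(h')=\mu'$. Because $Q$ and $Q'$ share the boundary arcs $(b,c)$ and $(d,a)$, the restriction $h'|_{Q}$ takes the value $0$ on $(d,a)$ and $1$ on $(b,c)$, so it is admissible for $Q$. Therefore
\[
\mu\le D_{Q}(h'|_{Q})=\mu'-\iint_{Q'\setminus Q}\lvert\nabla h'\rvert^{2}\,dx\,dy\le\mu' .
\]

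The step that needs care is strictness. We assume $Q\neq Q'$, which is implicit in the statement. Then $Q'\setminus\overline{Q}$ is a nonempty open set. The function $h'$ is harmonic and nonconstant on the connected set $Q'$, because it takes the values $0$ and $1$ on different boundary arcs. Its gradient $\nabla h'=\overline{f'}$ vanishes only at isolated points, since $f'\neq0$. Hence $\iint_{Q'\setminus Q}\lvert\nabla h'\rvert^{2}>0$, and this gives $\mu<\mu'$.

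A minor technical point is justifying the boundary behaviour of $h'|_{Q}$ on the shared arcs. This follows from Carathéodory's theorem, since $f'$ extends homeomorphically to $\overline{Q'}$. If the Sobolev formulation is inconvenient, the same comparison can be run through extremal length: curves in $Q$ joining the shared arcs are also curves in $Q'$. Strictness then follows from uniqueness of the extremal metric $\lvert f'\rvert$, which is positive on $Q'\setminus Q$.
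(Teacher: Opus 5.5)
Your proof is correct. The paper gives no proof of this proposition: it is quoted from \cite{Hen86}. The second quadrilateral in the statement should read $(Q';a,b,c,d)$, and $Q\neq Q'$ is needed for strictness, as you observe.

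The standard textbook argument is the dual, length--area form of yours. Map $Q'$ by $F$ onto the rectangle $R'$ of width $1$ and height $\mu'$. Then $F(Q)\subset R'$ has modulus $\mu$, and its sides $F((d,a))$ and $F((b,c))$ are the full vertical sides of $R'$. So every curve in $F(Q)$ joining them has length at least $1$, and Rengel's inequality gives $\mu\le\operatorname{area}F(Q)<\operatorname{area}R'=\mu'$.

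Both proofs rest on the same positivity, $\operatorname{area}F(Q'\setminus Q)=\iint_{Q'\setminus Q}\lvert F'\rvert^{2}>0$. You justify it correctly: the interior of the closure of a Jordan domain is the domain itself, so $Q'\setminus\overline{Q}$ is a nonempty open set. Also $F'$ never vanishes, not merely off isolated points.

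The curve version tests only curves, so it needs no discussion of boundary values. It also connects directly with Rengel's inequality, which the paper and \cite{S25} use anyway. Your function version is equally short. Its only delicate point is the trace formulation of the Dirichlet principle on a general Jordan domain, and you can bypass it. You only ever use the one test function $h'\circ f^{-1}$, which is harmonic in the rectangle of $Q$ and continuous up to its vertical sides by Carath\'eodory's theorem. The line-by-line Cauchy--Schwarz estimate therefore applies to it directly, with no Sobolev framework.

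Finally, you use $f'$ both for the conformal map of $Q'$ and for a derivative; a separate letter for the map of $Q'$ would avoid confusion.
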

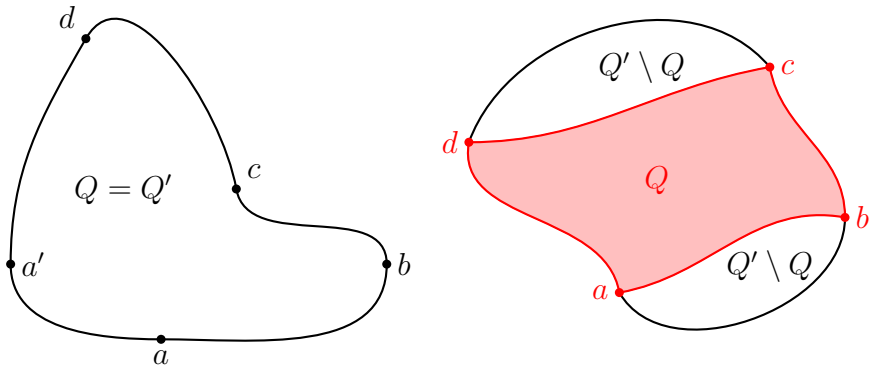
\begin{figure}[ht]
  \centering
  \begin{tikzpicture}
    \draw [thick] (6,3) to [out=-80, in=90] (8,2);
    \draw [thick] (8,2) to [out=-90, in=0] (5,1);
    \draw [thick] (5,1) to [out=180, in=-90] (3,2);
    \draw [thick] (3,2) to [out=90, in=-120] (4,5);
    \draw [thick] (4,5) to [out=60, in=100] (6,3);
    \fill[black] (5,1) circle (0.06) node[below]{$a$};
    \fill[black] (3,2) circle (0.06) node[right]{$a'$};
    \fill[black] (8,2) circle (0.06) node[right]{$b$};
    \fill[black] (6,3) circle (0.06) node[above right]{$c$};
    \fill[black] (4,5) circle (0.06) node[above left]{$d$};
    \draw (4.5,3) node{$Q=Q'$};
  \end{tikzpicture}
  \begin{tikzpicture}
    \path[name path=l1, draw, red] (7,4) to [out=-80, in=90] (8,2);
    \path[name path=l2, draw, red] (8,2) to [out=170, in=10] (5,1);
    \path[name path=l3, draw, red] (5,1) to [out=100, in=-100] (3,3);
    \path[name path=l4, draw, red] (3,3) to [out=0, in=190] (7,4);
    \draw [thick] (8,2) to [out=-90, in=-60] (5,1);
    \draw [thick] (3,3) to [out=70, in=130] (7,4);
    \path[fill, pink] (7,4) to [out=-80, in=90] (8,2) to [out=170, in=10] (5,1) to [out=100, in=-100] (3,3) to [out=0, in=190] (7,4);
    \draw[thick, red] (7,4) to [out=-80, in=90] (8,2);
    \draw[thick, red] (8,2) to [out=170, in=10] (5,1);
    \draw[thick, red] (5,1) to [out=100, in=-100] (3,3);
    \draw[thick, red] (3,3) to [out=0, in=190] (7,4);
    \fill[red] (7,4) circle (0.06) node[right]{$c$};
    \fill[red] (8,2) circle (0.06) node[right]{$b$};
    \fill[red] (5,1) circle (0.06) node[left]{$a$};
    \fill[red] (3,3) circle (0.06) node[left]{$d$};
    \draw[red] (5.5,2.5) node{$Q$};
    \draw (5.3,4) node{$Q'\setminus Q$};
    \draw (7,1.35) node{$Q'\setminus Q$};
  \end{tikzpicture}
  \caption{The situation of Prop.\ref{mpocm1} and Prop.\ref{mpocm2}.}
\end{figure}
By using Prop.\ref{mpocm1} and Prop.\ref{mpocm2}, we also obtain the following property. 
\begin{prop}[\cite{DV09}]
  \label{mpocm3}
  Let $(Q; a, b, c, d)$ be a polygonal quadrilateral i.e. quadrilaterals with (linear) intervals as sides. If the quadrilateral $(Q; a, b, c, d)$ is convex and the point $a', a'\neq a$, lies in the closed convex set, whose boundary consists of the side $(a, b)$ and the linear extensions of the sides $(d, a)$ and $(b, c)$ over the points $a, b$, respectively, up to the point of their intersections (which may be $\infty$), then
  \[
    M(Q;a,b,c,d)<M(Q';a',b,c,d).
  \]
\end{prop}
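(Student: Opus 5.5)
The plan is to reduce the statement to the two monotonicity properties, Prop.~\ref{mpocm1} and Prop.~\ref{mpocm2}, by passing through an auxiliary quadrilateral. Let $C$ denote the closed convex region bounded by the side $(a,b)$ and the extensions of $(d,a)$ beyond $a$ and of $(b,c)$ beyond $b$. Take $Q'$ to be the convex hull of $a',b,c,d$ with the marked points $a',b,c,d$. Since $a'\in C$, the new sides $(d,a')$ and $(a',b)$ stay inside the wedge cut out by the lines through $(d,a)$ and $(b,c)$, so $Q'$ is again a convex quadrilateral. Moreover the sides $(b,c)$ and $(c,d)$ are shared by $Q$ and $Q'$, and $Q\subseteq Q'$ because $a\in Q'$ when $a'\in C$.

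The argument proceeds in two stages, moving $a$ to $a'$ one direction at a time.

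\emph{Stage 1.} First suppose $a'$ lies on the extension of $(d,a)$ beyond $a$, i.e.\ on the boundary ray of $C$. Then the side $(d,a')$ contains $(d,a)$, the side $(b,c)$ is common to both, and $Q\subset Q'$ with $Q\neq Q'$. Prop.~\ref{mpocm2} applies directly with the common segments $(b,c)$ and $(d,a)$: the ``$(d,a)$'' arc of $Q$ is part of that of $Q'$, so one must check the version of Prop.~\ref{mpocm2} in which the common sides may grow. If that is not available, I would instead view $Q'$ with marked point $a$ in place of $a'$. Prop.~\ref{mpocm2} then gives $M(Q;a,b,c,d)<M(Q';a,b,c,d)$, since the arcs $(b,c),(d,a)$ are literally common and $Q\subset Q'$. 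Next, moving the marked point from $a$ to $a'$ along $\partial Q'$ is a motion in the direction away from $d$, i.e.\ the inverse of the motion in Prop.~\ref{mpocm1}, so $M(Q';a,b,c,d)<M(Q';a',b,c,d)$.

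\emph{Stage 2.} For a general $a'\in C\setminus\{a\}$, I would set $a''$ to be the intersection of the line $da'$ with the ray extending $(b,c)$ beyond $b$, or, when $a'$ is on the side $(a,b)$, treat that case separately. Alternatively, decompose the motion as follows: first slide along the extension of $(d,a)$ to a point $a_1$ with $a'$ on segment $a_1b$ (handled by Stage 1), then move along the boundary. Cleaner is this: the quadrilateral $(Q';a,b,c,d)$ with vertex $a$ kept as a boundary marked point still has $Q\subseteq Q'$ and common arcs $(b,c)$ and $(d,a)$ (the latter now being the segment $da$ on the side $da'$, or a broken line). Then apply Prop.~\ref{mpocm2} and afterwards Prop.~\ref{mpocm1}, exactly as in Stage 1. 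In either stage, strictness comes from $Q\neq Q'$ or $a\neq a'$.

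The main obstacle is the geometric bookkeeping. One must show that, for every $a'\in C$, the point $a$ actually lies on $\partial Q'$ on the arc from $d$ to $a'$, so that the motion $a\to a'$ is along the boundary in the direction away from $d$. This holds only when $a'$ is in $C$, and the boundary cases need care: $a'$ on the segment $(a,b)$, and $a'$ on the extension of $(b,c)$, where $a$ becomes an interior point of a side and $Q'$ may degenerate to a triangle. In the latter case I would first move to a nearby point and conclude by continuity of the modulus together with the strict inequality obtained from the other step.
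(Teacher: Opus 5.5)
Your Stage 1 is correct. When $a'$ lies on the extension of $(d,a)$ beyond $a$, the point $a$ sits on the side $(d,a')$ of $Q'$, and $Q\subset Q'$ with the arcs $(b,c)$ and $(d,a)$ in common. Prop.~\ref{mpocm2} followed by Prop.~\ref{mpocm1} then gives the strict inequality. This matches what the paper indicates: it gives no proof, cites \cite{DV09}, and says the result follows from Props.~\ref{mpocm1} and \ref{mpocm2}.

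The general case, however, rests on the claim that $a\in Q'$ (indeed $a\in\partial Q'$) and $Q\subseteq Q'$ for every $a'\in C$. This is false as soon as $a'$ leaves that ray. Take the unit square $a=0$, $b=1$, $c=1+i$, $d=i$, so that $C=\{0\le \operatorname{Re}z\le 1,\ \operatorname{Im}z\le 0\}$. For $a'=\tfrac12-i$ the side $(d,a')$ crosses the real axis at $\tfrac14$, so $a=0$ lies outside $Q'$. In general $[d,a']$ meets $[a,b]$ at a point $x$. Then $Q\setminus Q'$ is the triangle $(d,a,x)$ and $Q'\setminus Q$ is the triangle $(x,a',b)$, so neither domain contains the other and Prop.~\ref{mpocm2} cannot be applied in one step. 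Your ``cleaner'' argument therefore fails. The ``main obstacle'' you set out to verify (that $a$ lies on $\partial Q'$ between $d$ and $a'$) is not bookkeeping; it is simply false.

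The missing idea is an intermediate quadrilateral $(Q_x;x,b,c,d)$ contained in both $Q$ and $Q'$, together with the shrinking-domain form of Prop.~\ref{mpocm2}.

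\emph{Locating $x$.} The points $d$ and $a'$ both lie in the closed half-planes, bounded by the lines through $(d,a)$ and $(b,c)$, that contain $Q$. Hence $x\in[a,b]$.

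\emph{First step.} Move $a$ to $x$ along the side $(a,b)$. Prop.~\ref{mpocm1} gives $M(Q;a,b,c,d)<M(Q;x,b,c,d)$. Since $Q_x\subset Q$ with the arcs $(x,b)$ and $(c,d)$ in common, apply Prop.~\ref{mpocm2} to the relabelled quadrilaterals $(\,\cdot\,;b,c,d,x)$ and use $M(Q;a,b,c,d)=1/M(Q;b,c,d,a)$. This gives $M(Q;x,b,c,d)<M(Q_x;x,b,c,d)$; this shrinking-domain direction does not appear in your proposal.

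\emph{Second step.} The point $a'$ lies on the extension of $(d,x)$ beyond $x$, so your Stage 1 applied to $Q_x$ gives $M(Q_x;x,b,c,d)<M(Q';a',b,c,d)$. Either step is vacuous if $x=a$ or $x=a'$.

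Your passing alternative, through a point $a_1$ on the extension of $(d,a)$ with $a'\in[a_1,b]$, is similar in spirit but has two problems. First, $a_1$ need not exist when the extensions of $(d,a)$ and $(b,c)$ diverge; for example, take the trapezoid $a=0$, $b=1$, $c=\tfrac34+i$, $d=\tfrac14+i$ and $a'=1-10i$. Second, its second step changes the domain as well as the marked point, so Prop.~\ref{mpocm1} alone does not suffice there either.

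Finally, the degenerate cases need no continuity argument: $(Q';a',b,c,d)$ is a legitimate quadrilateral even when $b$ is not a corner of $Q'$.
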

We use these inequalities to evaluate conformal moduli of polygonal quadrilaterals in this paper. In the last of this section, we show some results in the special case.
\begin{lem}[\cite{Hen86}]
  Every symmetric quadrilateral has modulus 1. Here, the quadrilateral $(Q;x_1,x_2,x_3,x_4)$ is called \textit{symmetric} if there are some $i=1,2,3,4$ which satisfy the following two conditions. 
  \begin{itemize}
    \item The region $Q$ is symmetric with respect to the straight line $\Lambda$ through $x_i$ and $x_{i+2}$.
    \item The points $x_{i+1}$ and $x_{i+3}$ are symmetric with respect to $\Lambda$.
  \end{itemize}
\end{lem}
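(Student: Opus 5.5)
The plan is to use the reflection across $\Lambda$ together with uniqueness of the normalized conformal map onto a rectangle. Fix the index $i$ from the definition. Write $\sigma$ for the anticonformal reflection of $\mathbb{C}$ across $\Lambda$. By hypothesis $\sigma(Q)=Q$, $\sigma$ fixes $x_i$ and $x_{i+2}$, and $\sigma$ swaps $x_{i+1}$ and $x_{i+3}$. By the remark $M(Q;a,b,c,d)=1/M(Q;b,c,d,a)$, relabeling cyclically only replaces $M$ by $M$ or $1/M$. Since the claimed value is $1$, it suffices to treat the case where $\Lambda$ passes through $x_1$ and $x_3$, and $\sigma$ swaps $x_2$ and $x_4$.

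Let $f$ be the conformal map of $Q$ onto the rectangle $R=\{0<u<1,\,0<v<M\}$ with $x_1,x_2,x_3,x_4\mapsto 0,1,1+iM,iM$. Let $\tau(w)=i\bar w$ be the reflection across the diagonal line $u=v$. We have $\tau(0)=0$, $\tau(1)=i$, $\tau(iM)=M$ and $\tau(1+iM)=M+i$.

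Now consider $g=\tau\circ f\circ\sigma$. It is a composition of two anticonformal maps and one conformal map, so it is conformal (and bijective) from $Q$ onto $\tau(R)=\{0<u<M,\,0<v<1\}$. By Carathéodory's theorem the boundary correspondence is a homeomorphism, and we get $g(x_1)=\tau(f(x_1))=0$ and $g(x_2)=\tau(f(x_4))=M$. Similarly $g(x_3)=M+i$ and $g(x_4)=\tau(f(x_2))=i$.

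Rescale by $h=g/M$. Then $h$ maps $Q$ conformally onto $\{0<u<1,\,0<v<1/M\}$ with $x_1,x_2,x_3,x_4\mapsto 0,1,1+i/M,i/M$. By the definition of the conformal modulus, this gives $M(Q;x_1,x_2,x_3,x_4)=1/M$.

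The modulus is well defined: a conformal self-map of a rectangle fixing its four vertices is the identity, since it extends by reflection and fixes three boundary points of the disk model. Hence $M=1/M$. Since $M>0$, we conclude $M=1$.

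The only delicate point is checking that the boundary points go to the correct vertices of the reflected rectangle in the correct cyclic order. This is needed so that $h$ really is the normalized map for the same labelled quadrilateral. The issue is settled by the explicit vertex images computed above, and by noting that both $\sigma$ and $\tau$ reverse orientation, so their composition preserves the counterclockwise order of the marked points.
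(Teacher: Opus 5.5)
The paper gives no proof of this lemma; it quotes it from Henrici (\cite{Hen86}) and uses it only to obtain Lemma \ref{cm1/2}. Your reflection argument is the standard one and is correct. The reduction to $i=1$ via $M(Q;a,b,c,d)=1/M(Q;b,c,d,a)$ is fine. The map $g=\tau\circ f\circ\sigma$ is holomorphic, being the composition of a holomorphic map with two antiholomorphic ones. The boundary images $0,\,M,\,M+i,\,i$ are computed correctly, so $g/M$ is an admissible normalized map onto $\{0<u<1,\,0<v<1/M\}$.

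The one thing to repair is your justification that the modulus is well defined. You show that a conformal self-map of a fixed rectangle fixing its four vertices is the identity. That only gives uniqueness of the normalizing map once $M$ is known. What ``$M=1/M$'' actually needs is that the labelled rectangles $R_M$ and $R_{M'}$ can be vertex-preservingly conformally equivalent only when $M=M'$; here $h\circ f^{-1}$ maps $R_M$ onto $R_{1/M}$, not onto itself.

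A correct argument runs as follows. Let $F$ be such an equivalence. It extends by repeated Schwarz reflection across the sides to an entire function satisfying $F(w+2)=F(w)+2$ and $F(w+2iM)=F(w)+2iM'$. Then $F'$ is entire and doubly periodic, hence constant. So $F$ is affine, and $F(0)=0$, $F(1)=1$ force $F=\mathrm{id}$, giving $M=M'$. Alternatively, compare cross-ratios of the prevertices, or use extremal length.

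Since the paper's definition of the conformal modulus already presupposes that $M$ is well defined, you may simply cite this fact. The reason you gave, however, proves a different statement.
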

From this lemma, the unit disk with four marked points $1,i,-1,-i$ has modulus 1. When we set $\varphi$ as the conformal map from the unit disk to the upper half plane such that
\[
  \varphi(-i)=0,\varphi(i)=1,\varphi(-1)=\infty,
\]
we obtain 
\[
  \varphi(z)=\dfrac{z+i}{z+1}\dfrac{1-i}{2}
\]
and $\varphi(1)=1/2$. Therefore, one has the following lemma.
\begin{lem}
  \label{cm1/2}
  The quadrilateral $(Q;a,b,c,d)$ has modulus 1 if and only if the quadrilateral $(Q;a,b,c,d)$ is conformally equivalent to the upper half plane $(\mathbb{H};0,1/2,1,\infty)$ with four marked points $0,1/2,1,\infty$.
\end{lem}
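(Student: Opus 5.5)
The plan is to reduce everything to the upper half plane, using the fact that the conformal modulus is a conformal invariant. For the forward direction, I assume $M(Q;a,b,c,d)=1$. Let $g$ be the conformal map from $Q$ onto the upper half plane $\mathbb{H}$ sending $a,c,d$ to $0,1,\infty$; it exists and extends to the boundary by the Riemann mapping and Carathéodory theorems, since a Möbius map can normalize any three boundary points. Because $a,b,c,d$ are in boundary order, $g(b)=t\in(0,1)$, so $(Q;a,b,c,d)$ is conformally equivalent to $(\mathbb{H};0,t,1,\infty)$, and these two quadrilaterals have the same modulus. It therefore suffices to show that the map
\[
t\longmapsto M(\mathbb{H};0,t,1,\infty)
\]
is injective on $(0,1)$ and takes the value $1$ at $t=1/2$.

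For the value at $t=1/2$, I use the computation just before the lemma. The map $\varphi(z)=\frac{z+i}{z+1}\frac{1-i}{2}$ sends the unit disk onto $\mathbb{H}$ with $-i,1,i,-1$ going to $0,1/2,1,\infty$. The disk with the marked points $-i,1,i,-1$ is a symmetric quadrilateral, symmetric about the line through $1$ and $-1$, which swaps $\pm i$. Hence by the preceding lemma its modulus is $1$, and so $M(\mathbb{H};0,1/2,1,\infty)=1$.

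For injectivity I invoke Prop.~\ref{mpocm1}. I view the quadrilateral as $(\mathbb{H};b,c,d,a)$ with $b=t$, and move the point $t$ along the boundary toward $a=0$, that is, I decrease $t$. This changes the modulus strictly monotonically. Up to the relabelling this is exactly the situation of Prop.~\ref{mpocm1}: after a rotation of labels and the identity $M(Q;a,b,c,d)=1/M(Q;b,c,d,a)$, moving one vertex toward an adjacent one strictly changes the modulus. So $t\mapsto M(\mathbb{H};0,t,1,\infty)$ is strictly monotone, hence injective, and $M=1$ forces $t=1/2$. The main care needed is the bookkeeping of which vertex moves toward which, so that I get strict monotonicity in the right variable. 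Alternatively, I can apply Prop.~\ref{mpocm1} directly to the relabelled quadrilateral $(\mathbb{H};t,1,\infty,0)$, moving its first point $t$ toward its last point $0$.

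The converse direction is immediate. If $(Q;a,b,c,d)$ is conformally equivalent to $(\mathbb{H};0,1/2,1,\infty)$, then by conformal invariance together with the symmetric-quadrilateral computation above, its modulus is $1$.
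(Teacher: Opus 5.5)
Your proof is correct, and its core is the same as the paper's. The paper's entire argument is the symmetric-quadrilateral lemma applied to the disk with marked points $1,i,-1,-i$, plus the explicit map $\varphi$. This gives $M(\mathbb{H};0,1/2,1,\infty)=1$, and hence the ``if'' direction by conformal invariance.

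You differ in the ``only if'' direction, which the paper leaves implicit. There it follows at once from the definition of the modulus. A quadrilateral of modulus $1$ and $(\mathbb{H};0,1/2,1,\infty)$ are both mapped, vertices to vertices, onto the unit square. Composing one map with the inverse of the other gives the equivalence, so no monotonicity is needed.

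You instead normalize to $(\mathbb{H};0,t,1,\infty)$ and prove injectivity in $t$ via Prop.~\ref{mpocm1} applied to $(\mathbb{H};t,1,\infty,0)$. Your bookkeeping is right: decreasing $t$ moves the first point toward the last, so $M(\mathbb{H};t,1,\infty,0)$ increases in $t$ and $M(\mathbb{H};0,t,1,\infty)$ decreases in $t$.

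This is longer than the lemma requires, but it proves more. Every rectangle is equivalent to some $(\mathbb{H};0,t,1,\infty)$, so the map is onto $(0,\infty)$. Together with strict monotonicity, this makes $t\mapsto M(\mathbb{H};0,t,1,\infty)$ a decreasing homeomorphism $(0,1)\to(0,\infty)$. The paper uses exactly this tacitly in two places:
\begin{itemize}
\item in Lemma~\ref{parallel}, when it trades monotonicity of $M(Q_\epsilon)$ for monotonicity of $\xi_\epsilon$;
\item in Lemma~\ref{square-cm2}, when it passes from $M\to1$ to $z_{4,\epsilon}\to1/2$.
\end{itemize}
Both steps go beyond the lemma as stated, which only pins down the marked point at modulus exactly $1$.
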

We use this lemma to discuss the conformal moduli in non-generic cases.

\subsection{Gradient trees and holomorphic disks in the case $M=\mathbb{R},k=4$}
In this paper, we consider only the case $M=\mathbb{R},k=4$ and Lagrangian sections are affine. We first recall conditions when four affine Lagrangian sections $L_{1},L_{2},L_{3},L_{4}$ of $T^*\mathbb{R}$ form the convex quadrilateral.
\begin{lem}[\cite{S25}]
  \label{square}
  Let $L_{1},L_{2},L_{3},L_{4}$ be affine Lagrangian sections in $T^{*}\mathbb{R}$ which satisfy $L_{i}\cap L_{i+1}\neq\emptyset,L_{i}\neq L_{i+1}$, for $i=1,2,3,4$. By identifying $T^{*}\mathbb{R}$ with $\mathbb{R}^{2}$, we express each $L_{i}$ as a line $\{(x,y)\mid y=a_{i}x+b_{i}\}$, where $a_{i}$ and $b_{i}$ are real numbers. Let $x_{i}=(p_{i},q_{i})$ be the intersection point between $L_{i}$ and $L_{i+1}$. Then, $x_{1},x_{2},x_{3},x_{4}$ forms a quadrilateral if and only if $p_{1},p_{2},p_{3},p_{4}$ satisfy one of conditions in $(B)$ in Table \ref{tree-critpt2}. Furthermore, the quadrilateral $x_{1}x_{2}x_{3}x_{4}$ is convex and has vertices $x_{1},x_{2},x_{3},x_{4}$ in counterclockwise order if and only if $a_{1},a_{2},a_{3},a_{4}$ satisfy one of conditions in $(A)$ in Table \ref{tree-critpt2}. Here, $(a,b)$ in Table \ref{tree-critpt2} is a open interval, and we set $(a,b)=\emptyset$ if $a\geq b$ holds.
\end{lem}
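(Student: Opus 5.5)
The plan is elementary plane geometry, carried out in two stages: first decide when the four points form a quadrilateral, then decide when that quadrilateral is convex with counterclockwise vertices.

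\textbf{Coordinates of the vertices.} Since $L_i\cap L_{i+1}\neq\emptyset$ and $L_i\neq L_{i+1}$, we have $a_i\neq a_{i+1}$. Hence
\[
p_i=-\frac{b_{i+1}-b_i}{a_{i+1}-a_i},\qquad q_i=a_ip_i+b_i=a_{i+1}p_i+b_{i+1}.
\]
The side $x_{i-1}x_i$ lies on $L_i$, because both $x_{i-1}$ and $x_i$ belong to $L_i$. Therefore the closed polygon $x_1x_2x_3x_4$ has its $i$-th side on the line $L_i$, and the direction vector of that side is $(p_i-p_{i-1})(1,a_i)$.

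\textbf{When the points form a quadrilateral (condition $(B)$).} The points form a genuine quadrilateral exactly when:
\begin{itemize}
\item consecutive vertices are distinct, i.e.\ $p_{i-1}\neq p_i$ (this is where $L_{i-1}\cap L_i\cap L_{i+1}=\emptyset$ enters);
\item no three vertices are collinear;
\item the polygon is not self-intersecting.
\end{itemize}
Three consecutive vertices $x_{i-1},x_i,x_{i+1}$ are collinear only if $L_i=L_{i+1}$, which is excluded. So the real constraints are distinctness and the absence of a crossing of opposite sides. I would organize the cases by the relative order of $p_1,\dots,p_4$ on $\mathbb{R}$; there are $4!$ orderings, reduced by the cyclic symmetry and the reflection $x\mapsto -x$. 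For each ordering I would check whether the opposite sides $x_4x_1\subset L_1$ and $x_2x_3\subset L_3$, and likewise $L_2$ and $L_4$, can cross within the segments. Because each side projects onto the interval between consecutive $p$'s, a crossing requires those projected intervals to overlap. This gives the list in $(B)$.

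\textbf{Convexity and counterclockwise orientation (condition $(A)$).} The polygon is convex and counterclockwise exactly when every turn is a left turn:
\[
\det\bigl((p_i-p_{i-1})(1,a_i),\,(p_{i+1}-p_i)(1,a_{i+1})\bigr)=(p_i-p_{i-1})(p_{i+1}-p_i)(a_{i+1}-a_i)>0
\]
for all $i$. For a quadrilateral that is already simple, this is equivalent to convexity with the stated orientation. For each ordering allowed by $(B)$, the signs of $(p_i-p_{i-1})(p_{i+1}-p_i)$ are fixed. The four conditions then become sign conditions on $a_{i+1}-a_i$, i.e.\ a chain of inequalities on $a_1,\dots,a_4$.

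Since the $p_i$ themselves depend on the $a_i$ and $b_i$, I would rewrite each chain as ``$a_j$ lies in an open interval between two other $a$'s''. That is the form of $(A)$, with the convention $(a,b)=\emptyset$ if $a\ge b$ covering the infeasible chains.

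\textbf{Main obstacle.} The hard step is bookkeeping rather than any single idea. The task is to match each ordering of the $p_i$ with exactly the right row of the table, and to make sure that the left-turn sign condition already rules out self-intersecting configurations. I would handle that last point by noting that four left turns in a closed $4$-gon force a total turning of $2\pi$, which implies the polygon is simple. This would let $(A)$ be checked independently of the crossing analysis.
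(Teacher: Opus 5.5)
\textbf{There is a genuine gap in your first stage.} The determinant $(p_i-p_{i-1})(p_{i+1}-p_i)(a_{i+1}-a_i)$ and the turning-number argument in your second stage are the right tools; the crossing analysis is what breaks.

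Overlapping projections are only a necessary condition for two opposite sides to cross. Whether they actually cross depends on the $a_i,b_i$, not just on the order of the $p_i$. So the crossing analysis cannot output any list of orderings, let alone $(B)$. Two concrete examples:
\begin{itemize}
\item The simple non-convex quadrilateral $x_1=(0,0)$, $x_2=(2,5)$, $x_3=(1,0)$, $x_4=(3,-1)$ has $p_1<p_3<p_2<p_4$, which is not in $(B)$. No side is vertical, so it arises from four lines as in the lemma.
\item The lines $y=0$, $y=-x$, $y=x-2$, $y=2x-5$ give $x_1=(0,0)$, $x_2=(1,-1)$, $x_3=(3,1)$, $x_4=(5/2,0)$. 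Here $p_1<p_2<p_4<p_3$ is in $(B)$ and $a_1,a_3\in(a_2,a_4)$ is in $(A)$, yet the sides $x_2x_3$ and $x_4x_1$ cross at $(2,0)$.
\end{itemize}
Thus $(B)$ is not the condition for the points to bound a quadrilateral, and the two columns cannot be used separately. The correct content is: $x_1x_2x_3x_4$ is convex and counterclockwise if and only if some row of Table~\ref{tree-critpt2} has both its $(A)$ entry and its $(B)$ entry satisfied. Your last step (``that is the form of $(A)$'') drops exactly this row coupling.

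\textbf{How to repair it.} Discard the crossing analysis and apply your left-turn criterion to \emph{every} configuration of the $p_i$. That means the $24$ strict orders together with the ties $p_1=p_3$ and/or $p_2=p_4$. A tie between consecutive $p$'s makes two vertices coincide, so the determinant vanishes and that case is excluded automatically.

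Put $s_i=\operatorname{sgn}(p_i-p_{i-1})$. The four left-turn conditions become $\operatorname{sgn}(a_{i+1}-a_i)=s_is_{i+1}$. Since $\prod_i s_is_{i+1}=1$, the number of negative products is even. The $s_i$ are not all equal, because $\sum_i(p_i-p_{i-1})=0$. Hence either exactly two of the products $s_is_{i+1}$ are negative, or all four are.
\begin{itemize}
\item All four negative means $s$ alternates, i.e.\ $\max\{p_2,p_4\}<\min\{p_1,p_3\}$ or $\max\{p_1,p_3\}<\min\{p_2,p_4\}$. These are the eight strict orders missing from $(B)$ plus the corresponding ties. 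The conditions then read $a_1>a_2>a_3>a_4>a_1$, a contradiction.
\item The configurations in which $s$ changes sign exactly twice are precisely the twenty entries of column $(B)$. For each of them, the four sign conditions are exactly the $(A)$ entry of the same row. For example, $p_1<p_2<p_3<p_4$ gives $a_2<a_1$, $a_2<a_3$, $a_3<a_4$, $a_1<a_4$.
\end{itemize}
Combine this with your observation that four left turns force total turning $2\pi$, hence a simple convex polygon. That proves the row-wise statement. The paper only says the lemma follows ``by elementary calculations'', and this computation is presumably what is meant.
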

We can prove Lemma \ref{square} by elementary calculations. Let $f_{1},f_{2},f_{3},f_{4}$ be functions such that $L_{i}=\operatorname{graph}(df_{i})$ for $i=1,2,3,4$. We next study the moduli space $\mathcal{M}_{g}(\mathbb{R};\vec{f},\vec{p})$ in the case when $L_{1},L_{2},L_{3},L_{4}$ satisfy one of conditions in Table \ref{k4trees2}. By the definition of ribbon trees, the ribbon tree $(T,i)$ is the element of $Gr_{4}$ if and only if $T$ is isometric to one of the following trees in Figure \ref{k4trees2}. 
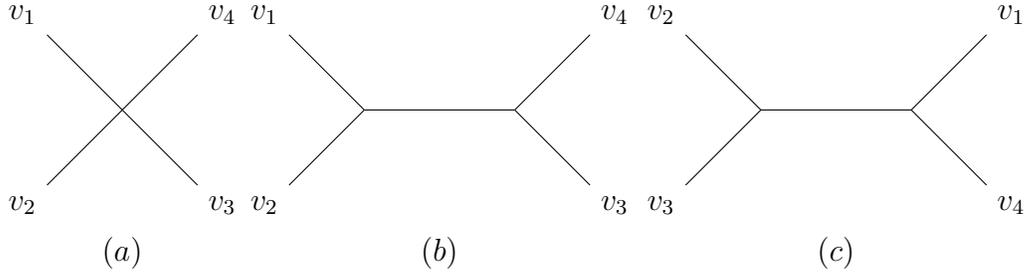
\begin{figure}[tb]
  \center
  \begin{tikzpicture}
    \begin{scope}
      \draw[thin] (1,1)--(0,0);
      \draw[thin] (-1,1)--(0,0);
      \draw[thin] (1,-1)--(0,0);
      \draw[thin] (-1,-1)--(0,0);
      \draw (-1,1) node[above left]{$v_{1}$};
      \draw (-1,-1) node[below left]{$v_{2}$};
      \draw (1,-1) node[below right]{$v_{3}$};
      \draw (1,1) node[above right]{$v_{4}$};
      \draw (0,-1.9) node{$(a)$};
    \end{scope}
    \begin{scope}[xshift=120]
      \draw[thin] (2,1)--(1,0);
      \draw[thin] (2,-1)--(1,0);
      \draw[thin] (-1,0)--(1,0);
      \draw[thin] (-2,1)--(-1,0);
      \draw[thin] (-2,-1)--(-1,0);
      \draw (-2,1) node[above left]{$v_{1}$};
      \draw (-2,-1) node[below left]{$v_{2}$};
      \draw (2,-1) node[below right]{$v_{3}$};
      \draw (2,1) node[above right]{$v_{4}$};
      \draw (0,-1.9) node{$(b)$};
    \end{scope}
    \begin{scope}[xshift=270]
      \draw[thin] (2,1)--(1,0);
      \draw[thin] (2,-1)--(1,0);
      \draw[thin] (-1,0)--(1,0);
      \draw[thin] (-2,1)--(-1,0);
      \draw[thin] (-2,-1)--(-1,0);
      \draw (-2,1) node[above left]{$v_{2}$};
      \draw (-2,-1) node[below left]{$v_{3}$};
      \draw (2,-1) node[below right]{$v_{4}$};
      \draw (2,1) node[above right]{$v_{1}$};
      \draw (0,-1.9) node{$(c)$};
    \end{scope}
  \end{tikzpicture}
  \caption{Candidates of the tree $T$ of the ribbon tree $(T,i)\in Gr_{4}$ (Reproduced from \cite{S25}).}
  \label{k4trees2}
\end{figure}
\begin{lem}[\cite{S25}]
  \label{k4gradtree}
  The moduli space $\mathcal{M}_{g}(\mathbb{R};\vec{f},\vec{p})$ of gradient trees is a one-point set. The gradient tree is constructed by the tree which is isomorphic to the tree in column $(C)$ of Table \ref{tree-critpt2}. Here, $(a),(b),(c)$ correspond to Figure \ref{k4trees2}. 
  \begin{table}[htbp]
    \centering
    \begin{tabular}{c|c|c}
      $(A)$ & $(B)$ & $(C)$ \\ \hline\hline
      \multirow{2}{*}{$a_{2},a_{4}\in(a_{1},a_{3})$} & $p_{4}<p_{1}<p_{2}<p_{3}$ & \multirow{2}{*}{$(c)$} \\
      & $p_{3}<p_{2}<p_{1}<p_{4}$ & \\ \hline
      \multirow{2}{*}{$a_{2},a_{4}\in(a_{3},a_{1})$} & $p_{1}<p_{4}<p_{3}<p_{2}$ & \multirow{2}{*}{$(c)$} \\
      & $p_{2}<p_{3}<p_{4}<p_{1}$ & \\ \hline
      \multirow{2}{*}{$a_{1},a_{3}\in(a_{2},a_{4})$} & $p_{1}<p_{2}<p_{3}<p_{4}$ & \multirow{2}{*}{$(b)$} \\
      & $p_{4}<p_{3}<p_{2}<p_{1}$ & \\ \hline
      \multirow{2}{*}{$a_{1},a_{3}\in(a_{4},a_{2})$} & $p_{3}<p_{4}<p_{1}<p_{2}$ & \multirow{2}{*}{$(b)$} \\
      & $p_{2}<p_{1}<p_{4}<p_{3}$ & \\ \hline
      \multirow{6}{*}{$\max\{a_{1},a_{3}\}<\min\{a_{2},a_{4}\}$} & $p_{4}<p_{1}<p_{3}<p_{2}$ & \multirow{2}{*}{$(c)$} \\
      & $p_{2}<p_{3}<p_{1}<p_{4}$ &\\ \cline{2-3}
      & $p_{4}<p_{3}<p_{1}<p_{2}$ & \multirow{2}{*}{$(b)$} \\
      & $p_{2}<p_{1}<p_{3}<p_{4}$ &\\ \cline{2-3}
      & $p_{4}<p_{1}=p_{3}<p_{2}$ & \multirow{2}{*}{$(a)$} \\
      & $p_{2}<p_{3}=p_{1}<p_{4}$ &\\ \hline
      \multirow{6}{*}{$\max\{a_{2},a_{4}\}<\min\{a_{1},a_{3}\}$} & $p_{1}<p_{4}<p_{2}<p_{3}$ & \multirow{2}{*}{$(c)$} \\
      & $p_{3}<p_{2}<p_{4}<p_{1}$ &\\ \cline{2-3}
      & $p_{3}<p_{4}<p_{2}<p_{1}$ & \multirow{2}{*}{$(b)$} \\
      & $p_{1}<p_{2}<p_{4}<p_{3}$ &\\ \cline{2-3}
      & $p_{1}<p_{4}=p_{2}<p_{3}$ & \multirow{2}{*}{$(a)$} \\
      & $p_{3}<p_{2}=p_{4}<p_{1}$ &\\ \hline
    \end{tabular}
    \caption{All of conditions when we obtain a unique gradient tree (Reproduced from \cite{S25}).}
    \label{tree-critpt2}
  \end{table}
\end{lem}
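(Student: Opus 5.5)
The plan is to prove Lemma \ref{k4gradtree} directly from the one-dimensional structure of $M=\mathbb{R}$. Each $f_{i}$ is quadratic, so $f_{j}-f_{i}=\tfrac12(a_{j}-a_{i})x^{2}+(b_{j}-b_{i})x+\mathrm{const}$. Its negative gradient flow is therefore linear:
\[
\dot x=-(a_{j}-a_{i})(x-p_{ij}),
\]
where $p_{ij}$ is the unique critical point. In particular $p_{i}=p_{i,i+1}$ is the $x$-coordinate of $x_{i}$.

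\textbf{External edges.} On $e_{i}\simeq(-\infty,0]$ the curve $I(t)$ must tend to $p_{i}$ as $t\to-\infty$. This forces $p_{i}$ to be a repelling point of $-\grad(f_{i+1}-f_{i})$, i.e. $a_{i+1}>a_{i}$. The orientation convention for $e_{4}$ gives the corresponding condition with $f_{1}-f_{4}$. Two consequences follow:
\begin{itemize}
\item If the external edge is nonconstant, $I(e_{i})$ is a half-open interval starting at $p_{i}$, on either side.
\item If $I(e_{i})\equiv p_{i}$, the edge is constant.
\end{itemize}
I would check that the convexity conditions in column $(A)$ are exactly what makes each external flow line emanate from $p_{i}$ in the required direction. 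This uses Lemma \ref{square} to translate the sign patterns of the $a_{i}$ and the orderings of the $p_{i}$.

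\textbf{Case split over the trees in Figure \ref{k4trees2}.}
\begin{itemize}
\item \emph{Tree $(a)$.} All four external edges meet at one vertex $I(v)=q$. Each $e_{i}$ flows from $p_{i}$ toward $q$ in infinite backward time and reaches $q$ at time $0$. By the linear ODE, a flow starting at $p_{i}$ that is not constant stays at $p_{i}$ for all finite time. Hence we need $q=p_{i}$ for every nonconstant edge and constant flow for the others. Consistency forces $p_{1}=p_{3}$ or $p_{2}=p_{4}$, together with the rows labelled $(a)$.
\item \emph{Trees $(b)$ and $(c)$.} Two external edges, say $e_{1},e_{2}$ in $(b)$, meet at a vertex $u$. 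Then $I(u)$ must be reachable from both $p_{1}$ and $p_{2}$ under their flows. The internal edge of length $l>0$ flows by $-\grad(f_{3}-f_{1})$ from $I(u)$ to $I(u')$, where $I(u')$ is the meeting point for $e_{3},e_{4}$. In one dimension, an external edge meeting an internal vertex can only do so if it is constant, i.e. $I(u)=p_{1}=p_{2}$. Since $p_{1}\ne p_{2}$, the meeting instead has to be handled by the degenerate structure of gradient trees in $\mathbb{R}$.
\end{itemize}

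I expect the tree $(b)$/$(c)$ step to be the main obstacle. The one-dimensional constraint is strong and I may be misreading the convention. In \cite{S25} the external edges are genuine gradient curves ending at the internal vertices at finite time $0$ while starting at $p_{i}$ at $t=-\infty$, and $p_{i}$ is a critical point, so these curves are really half-infinite trajectories emanating from unstable critical points. Thus $I(e_{i})$ is the segment between $p_{i}$ and the vertex image, and time $-\infty$ is only approached asymptotically. With that reading, the plan for $(b)$ and $(c)$ is as follows:
\begin{itemize}
\item Require $I(u)$ to lie on the unstable side of both $p_{1}$ and $p_{2}$, and $I(u')$ likewise for $p_{3},p_{4}$.
\item Parametrize by $I(u)$ and solve the linear ODE on the internal edge explicitly. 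This gives $I(u')$ as an affine function of $I(u)$ with factor $e^{-(a_{3}-a_{1})l}$.
\item Use the balancing requirement that the incoming edges be consistent. This reduces to one equation in $(I(u),l)$, which in the quadratic setting pins down $I(u)=p_{13}$ (the critical point of $f_{3}-f_{1}$) and $I(u')=p_{24}$ appropriately.
\item Check the sign conditions against the orderings in column $(B)$ to see which tree survives.
\end{itemize}

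\textbf{Uniqueness and conclusion.} Uniqueness follows because each step determines its data uniquely: the vertex images are forced critical points, and $l$ is then determined by the explicit exponential formula. A final table comparison shows that exactly one of $(a),(b),(c)$ admits a solution in each row of Table \ref{tree-critpt2}. This gives the one-point moduli space and the stated column $(C)$.
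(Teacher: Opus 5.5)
The paper does not reprove this lemma; it is quoted from \cite{S25}. Your plan nonetheless has a genuine gap at its central step: you never use the dichotomy at each external edge that decides everything.

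On $e_i\simeq(-\infty,0]$ the equation is $\dot x=\lambda_i(x-p_i)$ with $\lambda_i=a_i-a_{i+1}$ (indices mod $4$, which already covers $e_4$). There are two cases:
\begin{itemize}
\item If $\lambda_i>0$, every $q\neq p_i$ is reached at $t=0$ by exactly one trajectory emanating from $p_i$.
\item If $\lambda_i<0$, i.e.\ $p_i$ has Morse index $0$ (the case singled out in Theorem \ref{k4corr1}), the only trajectory tending to $p_i$ as $t\to-\infty$ is constant. So the internal vertex at the end of $e_i$ must map to $p_i$.
\end{itemize}
Your sign is reversed: repelling means $a_{i+1}<a_i$, not $a_{i+1}>a_i$.

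Going through column $(A)$, exactly two edges are forced constant in every row: $e_1,e_2$ / $e_3,e_4$ / $e_2,e_3$ / $e_1,e_4$ in the first four rows, and $e_1,e_3$ / $e_2,e_4$ in the last two. Adjacent $p_i$ are distinct, so two adjacent constant edges cannot share a trivalent vertex. This alone selects $(c)$ or $(b)$ in the first four rows. In the last two rows, $(a)$ occurs exactly when $p_1=p_3$ (resp.\ $p_2=p_4$), and otherwise both $(b)$ and $(c)$ pass this test. Once the two vertex images are pinned at these $p_i$, the internal edge is a one-dimensional linear (or constant-speed, if the slopes agree) flow between two known points. Hence $l$ is unique, and $l>0$ is a sign condition; that condition is what column $(B)$ encodes and what decides between $(b)$ and $(c)$.

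Your treatment of $(b)$/$(c)$ does not reach this.
\begin{itemize}
\item The claim that an external edge can meet an internal vertex only if it is constant is false when $\lambda_i>0$.
\item There is no ``balancing requirement'' in the definition of a gradient tree.
\item The conclusion $I(u)=p_{13}$, $I(u')=p_{24}$ is wrong. If $I(u)$ were the critical point of $f_3-f_1$, the internal edge would be stationary and $l$ would not be determined. When $a_1=a_3$ (allowed in the table, and exactly the parallel case of this paper), $f_3-f_1$ has no critical point at all.
\end{itemize}
The paper's own computation in the proof of Lemma \ref{length-4thpt} shows the correct picture. For $a_2<a_1=a_3<a_4$ and $p_1<p_2<p_3<p_4$, the internal edge is $I_{int}(t)=p_2-(b_3-b_1)t$, running from $p_2$ to $p_3$, the two index-zero points.

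Your tree $(a)$ step is also inverted: it is the constant edges, not the nonconstant ones, that force $q=p_i$. With your sign it would yield $p_2=p_4$ instead of $p_1=p_3$ in the row $\max\{a_1,a_3\}<\min\{a_2,a_4\}$.
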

\section{Main results and their proofs.}
\subsection{Main results.}
Let $f_1,f_2,f_3,f_4$ be functions on $\mathbb{R}$ which satisfy Table \ref{tree-critpt2}. We denote by $p_i$ the critical point of $f_{i+1}-f_i$, and by $x_i^\epsilon$ the intersection point of $L_i^\epsilon$ and $L_{i+1}^\epsilon$. We set $w_\epsilon$ as the Schwarz-Christoffel map from the upper half plane to the quadrilateral $x_1^\epsilon x_2^\epsilon x_3^\epsilon x_4^\epsilon$ such that $w_\epsilon(0)=x_3^\epsilon,w_\epsilon(1)=x_1^\epsilon,w_\epsilon(\infty)=x_2^\epsilon$. We define $z_{4,\epsilon}\in(0,1)$ as $z_{4,\epsilon}=w_\epsilon^{-1}(x_4^\epsilon)$. By Lemma \ref{k4gradtree}, the moduli space $\mathcal{M}_g(\mathbb{R};\vec{f},\vec{p})$ of gradient trees is a one-point set. We denote $(I,(T,i,v_{1},l))$ the unique element of $\mathcal{M}_{g}(\mathbb{R};\vec{f},\vec{p})$. According to the definition of trees, $T\in Gr_{4}$ has at most one internal edge. We therefore consider $l\geq0$ as the length of the internal edge of $T$. In \cite{S25}, we assumed that $L_i^\epsilon\cap L_j^\epsilon\neq \emptyset$ and $L_i^\epsilon\neq L_j^\epsilon\ (i\neq j)$ and $p_1\neq p_3,\ p_2\neq p_4$. We called such a quadrilateral a \textit{generic quadrilateral}. We also say that $(f_1,f_2,f_3,f_4)$ is \textit{generic} when the corresponding quadrilateral is generic. In \cite{S25}, we gave the main statement for the case when the image of pseudoholomorphic disks is generic quadrilaterals in $\mathbb{C}\simeq T^*\mathbb{R}$. In this paper, we will show the complete statement for any case when the image of pseudoholomorphic disks is convex quadrilaterals in $T^*\mathbb{R}$. We here define the neighborhood $D_{p}(\delta)$ of $p\in\mathbb{R}\cup\{\infty\}$ and the transformation $\phi_{p,\delta}$ from a stripe $\Theta_{ext}\coloneqq\{(\tau,\sigma)\in(-\infty,0)\times[0,1]\}$ to $D_{p}(\delta)\setminus\{p\}$ as follows.
\begin{align*}
  &D_{p}(\delta)\coloneqq
  \begin{cases*}
    \{\left\lvert z-p\right\rvert<\delta\mid\text{Im}z\geq0\}\,(p\in\mathbb{R})\\
    \{\left\lvert z\right\rvert^{-1}<\delta\mid\text{Im}z\geq0\}\,(p=\infty)
  \end{cases*}
  \,(\delta>0)\\
  &\phi_{p,\delta}(\tau,\sigma)\coloneqq
  \begin{cases*}
    p+\delta\exp[\pi(\tau+i\sigma)]\,(p\neq\infty)\\
    -\delta^{-1}\exp[-\pi(\tau+i\sigma)]\,(p=\infty)
  \end{cases*}
\end{align*}
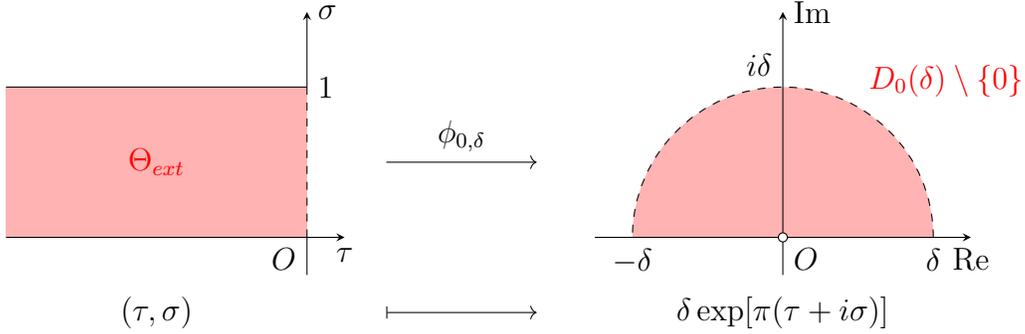
\begin{figure}[bt]
  \centering
  \begin{tikzpicture}
    \draw (0,0) node[below left]{$O$};
    \draw (0,2) node[right]{$1$};
    \fill[red,opacity=0.3] (-4,0)--(0,0)--(0,2)--(-4,2);
    \draw (0,2)--(-4,2);
    \draw[red](-2,1)node{$\Theta_{ext}$};
    \draw[->,,>=stealth] (-4,0)--(0.5,0)node[below]{$\tau$};
    \draw[->,>=stealth] (0,2)--(0,3)node[right]{$\sigma$};
    \draw[dashed] (0,0)--(0,2);
    \draw (0,-0.5)--(0,0);
    \draw (-2,-1)node{$(\tau,\sigma)$};
    \begin{scope}[xshift=30]
      \draw[->] (0,1)--(2,1);
      \draw (1,1)node[above]{$\phi_{0,\delta}$};
      \draw[|->] (0,-1)--(2,-1);
    \end{scope}
    \begin{scope}[xshift=180]
      \fill[red,opacity=0.3] (2,0) arc (0:180:2) --(-2,0)--cycle;
      \draw[dashed] (2,0) arc (0:180:2) --(-2,0);
      \draw[->,>=stealth] (-2.5,0)--(2.5,0)node[below]{Re};
      \draw[->,>=stealth] (0,-0.5)--(0,3)node[right]{Im};
      \draw (0,2) node[above left]{$i\delta$};
      \draw (2,0) node[below]{$\delta$};
      \draw (-2,0) node[below]{$-\delta$};
      \draw (0,0) node[below right]{$O$};
      \draw (0,-1) node{$\delta\exp[\pi(\tau+i\sigma)]$};
      \draw[red] (60:2)node[above right]{$D_{0}(\delta)\setminus\{0\}$};
      \fill[white] (0,0)circle(0.06);
      \draw (0,0) circle (0.06);
    \end{scope}
  \end{tikzpicture}
  \caption{The transformation $\phi_{0,\delta}$ (Reproduced from \cite{S25}).}
  \label{k3trans}
\end{figure}
We sometime omit $\epsilon$ from $z_{4,\epsilon}$ in order to treat $z_{4,\epsilon}$ as $z_{1},z_{2},z_{3}$. We now discuss the correspondence between gradient trees and holomorphic disks. We first consider the case $(p_3-p_1)(p_4-p_2)\neq0$. In this case, we divide the upper half plane into seven region as Figure \ref{fig:k4upperhalf}.
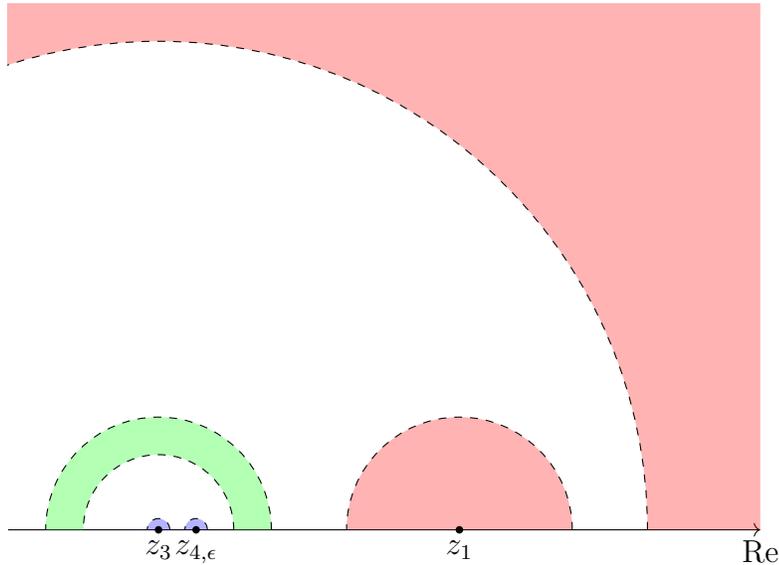
\begin{figure}[tb]
  \centering
  \begin{tikzpicture}
    \clip (-2,-0.75) rectangle (9,7);
    \draw[->]  (-2,0)--(8,0) node [below]{Re};
    \fill[blue,opacity=0.3] (0.15,0) arc (0:180:0.15)--(-0.15,0)--cycle;
    \fill[blue,opacity=0.3] (0.65,0) arc (0:180:0.15)--(-0.35,0)--cycle;
    \fill[red,opacity=0.3] (5.5,0) arc (0:180:1.5)--(4,0)--cycle;
    \fill[red,opacity=0.3] (8,0)--(8,9)--(-6.5,9)--(-6.5,0) arc (180:0:6.5)--(6.5,0)--cycle;
    \fill[green,opacity=0.3] (1.5,0) arc (0:180:1.5)--(-1.5,0)--(-1,0) arc (180:0:1)--(1,0)--cycle;
    \draw[dashed] (0.15,0) arc (0:180:0.15);
    \draw[dashed] (0.65,0) arc (0:180:0.15);
    \fill[black] (0,0) circle (0.05) node[below]{$z_{3}$};
    \fill[black] (0.5,0) circle (0.05) node[below]{$z_{4,\epsilon}$};
    \draw[dashed] (1,0) arc (0:180:1);
    \draw[dashed] (1.5,0) arc (0:180:1.5);
    \draw[dashed] (5.5,0) arc (0:180:1.5);
    \fill[black] (4,0) circle (0.05) node[below]{$z_{1}$};
    \draw[dashed] (6.5,0) arc (0:180:6.5);
  \end{tikzpicture}
  \caption{The figure of the upper half plane divided by seven regions in the case $(p_3-p_1)(p_4-p_2)>0$ (Reproduced from \cite{S25}).}
  \label{fig:k4upperhalf}
\end{figure}
We here define the region $D_{z_i}^\epsilon(\delta)$ and the transformation $\phi_{z_i,\delta}^\epsilon$ from $\Theta_{ext}$ to $D_{z_i}^\epsilon(\delta)$ as in 
\[
  D_{z_{i}}^{\epsilon}(\delta)\coloneqq
  \begin{cases*}
    D_{z_{i}}(\delta)\,(i=1,2)\\
    D_{z_{i}}(z_{4,\epsilon}\delta)\,(i=3,4)
  \end{cases*}
  ,\phi_{z_{i},\delta}^{\epsilon}\coloneqq
  \begin{cases*}
    \phi_{z_{i},\delta}\,(i=1,2)\\
    \psi_{\epsilon}^{-1}\circ\phi_{\psi_{\epsilon}(z_{i}),\delta}\,(i=3,4)
  \end{cases*}
  ,\psi_{\epsilon}(z)\coloneqq z/z_{4,\epsilon},
\]
and the stripe $\Theta_{int}^{\epsilon}(\delta)$ and the transformation $\phi_{int}$ from $\Theta_{int}^{\epsilon}(\delta)$
\begin{align*}
  \phi_{int}(\tau,\sigma)&\coloneqq\exp\left[-\pi\tau+i\pi(1-\sigma)\right]\\
  \Theta_{int}^{\epsilon}(\delta)&\coloneqq\left.\left\{(\tau,\sigma)\,\middle| \,\tau\in\left(-\dfrac{1}{\pi}\log \delta,-\dfrac{1}{\pi}\log z_{4,\epsilon}+\dfrac{1}{\pi}\log \delta\right),\sigma\in[0,1]\right\}\right.\,.
\end{align*}
Images of regions and transformations are in Figure \ref{4markedpts} and Figure \ref{fig-intedge}. Then we obtain the following theorem.
\begin{thm}
  \label{k4corr1}
  Let $\delta>0$ be a real number such that $D_{i}(\delta)\cap D_{j}(\delta)$ for $i\neq j$ and $i,j\in\{0,1,\infty\}$. The point $p_{0}\in \mathbb{R}$ is either $p_{1}$ or $p_{2}$ such that the Morse index of $p_{0}$ is zero, and the point $p_{0}'\in\mathbb{R}$ is either $p_{3}$ or $p_{4}$ satisfying the same condition as $p_{0}$. We set $I_{i}$ as the restriction of $I$ to the external edge $e_{i}$ of $T$, and set $I_{int}$ as the restriction of $I$ to the internal edge of $T$. If $(p_{3}-p_{1})(p_{4}-p_{2})>0$, then we obtain 
  \begin{gather}
    \lim_{\epsilon\to+0}\sup_{z\in D(\delta)}\left\lvert w_{\epsilon}(z)-p_{0}\right\rvert=0,\label{eq:k4p0-1-1}\\
    \lim_{\epsilon\to+0}\sup_{z\in D(\delta)}\left\lvert w_{\epsilon}\circ\psi_{\epsilon}^{-1}(z)-p_{0}'\right\rvert=0,\label{eq:k4p0-1-2}\\
    \lim_{\epsilon\to+0}\sup_{(\tau,\sigma)\in\Theta_{ext}}\left\lvert w_{\epsilon}\circ \phi_{z_{i},\delta}^{\epsilon}(\tau,\sigma)-I_{i}(\epsilon\tau)\right\rvert=0\,(i=1,2,3,4),\label{eq:k4ext-1}\\
    \lim_{\epsilon\to+0}\sup_{(\tau,\sigma)\in\Theta_{int}^{\epsilon}(\delta)}\left\lvert w_{\epsilon}\circ\phi_{int}(\tau,\sigma)-I_{int}(\epsilon\tau)\right\rvert=0.\label{eq:k4int-1}
  \end{gather}
\end{thm}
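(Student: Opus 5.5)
The plan is to follow the scheme of \cite{S25}. The new ingredient is control of $z_{4,\epsilon}$ in the non-generic case where the quadrilateral has a pair of parallel opposite sides while $p_1\neq p_3$ and $p_2\neq p_4$. Theorem \ref{k4corr1} assumes $(p_3-p_1)(p_4-p_2)>0$, so only that degeneracy occurs here.

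\textbf{Representation of $w_\epsilon$.} Write $w_\epsilon$ by Theorem \ref{SCmap} as
\[
w_\epsilon(z)=x_3^\epsilon+B_\epsilon\int_0^z\zeta^{\alpha_3^\epsilon-1}(\zeta-z_{4,\epsilon})^{\alpha_4^\epsilon-1}(\zeta-1)^{\alpha_1^\epsilon-1}d\zeta .
\]
Near $0$, $z_{4,\epsilon}$ and $1$, expand it by Proposition \ref{AppellF1} as $(z-z_i)^{\alpha_i^\epsilon}$ times an $F_1$-series; near $\infty$, use the same form in $1/z$. In the annulus $z_{4,\epsilon}<|z|<1$, use the connection formula of Lemma \ref{F1conn1}.

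When a gamma-quotient there degenerates, because $\beta-\alpha$ or $\gamma-\beta$ is a nonpositive integer for every $\epsilon$ (the parallel-side case, where two angle exponents sum to an integer), I would instead derive the continuation termwise. Expand $F_1$ as a single series in one variable whose coefficients are $_2F_1$'s in the other. Then apply Lemma \ref{2F1conn2} and Proposition \ref{2F1conn1} to each coefficient. This produces extra $\log z$ and digamma terms.

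\textbf{Limit of $z_{4,\epsilon}$.} Next I determine the asymptotics of $\log z_{4,\epsilon}$ (or of $\log(1-z_{4,\epsilon})$). Compute a ratio of opposite side lengths in two ways:
\begin{itemize}
\item Euclidean geometry gives the ratio as rational in $\epsilon$, with limit given by the $p_i$.
\item The integral representation evaluates the sides by Proposition \ref{2F1} and the connection formulas.
\end{itemize}
In the generic case the second computation contains $z_{4,\epsilon}^{\alpha_3^\epsilon+\alpha_4^\epsilon-1}$, and solving gave $\epsilon\log z_{4,\epsilon}\to -c\,l$, with $l$ the internal edge length. In the parallel case this exponent vanishes identically, and the log terms from the derived formula take its place. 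Equating the leading coefficients, using Corollary \ref{angleslim} ($\epsilon\Gamma(\alpha_i^\epsilon)\to\pi/|a_{i+1}-a_i|$), should again give $\lim\epsilon\log z_{4,\epsilon}=-\pi l$ up to the appropriate normalisation. In particular $z_{4,\epsilon}\to0$, and this is exactly what makes $\Theta_{int}^\epsilon(\delta)$ have length of order $l/\epsilon$.

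\textbf{Uniform estimates.} With these asymptotics, prove the four limits.
\begin{itemize}
\item For \eqref{eq:k4p0-1-1} and \eqref{eq:k4p0-1-2}: on $D(\delta)$ and on the rescaled region $\psi_\epsilon^{-1}(D(\delta))$, the prefactors $B_\epsilon\Gamma$-quotients are bounded. The image is then within $O(\epsilon)$ of the vertex cluster, which converges to the point $(p_0,0)$ or $(p_0',0)$.
\item For \eqref{eq:k4ext-1}: substitute $\phi^\epsilon_{z_i,\delta}$ into the local series. The leading term $\delta^{\alpha_i}e^{\pi\alpha_i(\tau+i\sigma)}$ has real part tracking $p_i+(\text{const})e^{\pi\alpha_i^\epsilon\tau}$, which approximates the explicit gradient curve $I_i(\epsilon\tau)$ (exponential in $\epsilon\tau$). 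Lemma \ref{anglesineq} controls the difference $\pi\alpha_i^\epsilon-|a_{i+1}-a_i|\epsilon$ uniformly in $\tau<0$, and the remaining series terms are $O(\delta)$-small uniformly.
\item For \eqref{eq:k4int-1}: use the annulus expansion with $z=\phi_{int}(\tau,\sigma)$. The dominant terms are $(-z)^{-\alpha}$ and $(-z)^{-\beta}$, plus the log term in the degenerate case. Combined with the asymptotics of $z_{4,\epsilon}$, they reproduce $I_{int}(\epsilon\tau)$, an affine or exponential function on $[0,l]$.
\end{itemize}

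\textbf{Main obstacle.} I expect the hardest step to be the uniform control in \eqref{eq:k4int-1} in the degenerate case. The $\log(-z)$ and digamma terms carry coefficients of size $\epsilon^{-1}$ through $\Gamma(\alpha_i^\epsilon)$, and they must be shown to cancel against the normalisation $B_\epsilon$ uniformly over a strip of length $\sim l/\epsilon$. I would first try to bound the tails of the series by a geometric series in $\delta$, uniformly in $\epsilon$, and then treat the finitely many leading terms by hand.
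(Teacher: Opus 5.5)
\paragraph{Verdict.} Your plan follows the paper's structure, but the one genuinely new step, the asymptotics of $\log z_{4,\epsilon}$, is not carried out correctly as described. It can be repaired with tools you already cite.

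\paragraph{What agrees with the paper.} Writing $F_1$ as a series of ${}_2F_1$'s in $z/z_{4,\epsilon}$ and applying Lemma~\ref{2F1conn2} termwise is exactly Lemma~\ref{sqr-rep1}. Your treatment of \eqref{eq:k4int-1} is also the paper's: the coefficient $\frac{\sin\pi\alpha_3}{\pi}(x_4-x_3)e^{\pi\alpha_3 i}\to b_1-b_3$ of $\log(-z)$ produces the affine $I_{int}$. Your argument for \eqref{eq:k4p0-1-1}--\eqref{eq:k4p0-1-2} works once made precise, and it is simpler than the paper's, which crosses the annulus from $0$ and needs Lemma~\ref{length-4thpt} to cancel the drift $p_3-p_2$. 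The vertices do not ``cluster'': $x_1^\epsilon$ and $x_2^\epsilon$ tend to $p_1\neq p_2$. What you should do is integrate from the vertex whose angle tends to $\pi$, along paths avoiding $z_1$ and $z_{4,\epsilon}$. Here that vertex is $w_\epsilon(\infty)=x_2^\epsilon$, respectively $w_\epsilon(0)=x_3^\epsilon$ after rescaling. Since $\lvert B_\epsilon\rvert=O(\epsilon)$ and the integrand is bounded on those paths, this gives an $O(\epsilon)$ bound.

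\paragraph{The gap.} Take the paper's case $a_2<a_1=a_3<a_4$, so $L_1\parallel L_3$ and $\alpha_3^\epsilon+\alpha_4^\epsilon=\alpha_1^\epsilon+\alpha_2^\epsilon=1$. You do not say which pair of opposite sides you use. The pair in which $z_{4,\epsilon}^{\alpha_3^\epsilon+\alpha_4^\epsilon-1}$ appears as a factor you can solve for is $x_3x_4$, $x_1x_2$, with preimages $[0,z_{4,\epsilon}]$ and $[1,\infty)$. Both side integrals are plain ${}_2F_1$'s at $z_{4,\epsilon}$, and when the exponent vanishes
\[
  \frac{\lvert x_3^\epsilon-x_4^\epsilon\rvert}{\lvert x_1^\epsilon-x_2^\epsilon\rvert}
  =\frac{B(\alpha_3^\epsilon,1-\alpha_3^\epsilon)\,{}_2F_1(1-\alpha_1^\epsilon,\alpha_3^\epsilon,1;z_{4,\epsilon})}{B(1-\alpha_1^\epsilon,\alpha_1^\epsilon)\,{}_2F_1(\alpha_3^\epsilon,1-\alpha_1^\epsilon,1;z_{4,\epsilon})}
  =\frac{\sin\pi\alpha_1^\epsilon}{\sin\pi\alpha_3^\epsilon}.
\]
This is also the Euclidean value, because both legs of the trapezoid have the same height. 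So for this pair the equation is an identity. No logarithm appears, no continuation formula enters, and nothing is learned about $z_{4,\epsilon}$. Your sentence ``the log terms from the derived formula take its place'' is therefore not a step that can be carried out for this ratio.

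\paragraph{The repair.} The information has to come from sides whose preimages straddle the colliding pair. The paper obtains it by expanding $w_\epsilon$ in the annulus twice, based at $w_\epsilon(0)$ and at $w_\epsilon(\infty)$, and matching the $z$-independent terms. Your side-ratio idea also works if you use the parallel pair $x_4x_1$, $x_2x_3$, with preimages $[z_{4,\epsilon},1]$ and $(-\infty,0]$. Their Schwarz--Christoffel integrals are ${}_2F_1$'s with $c=a+b$ at argument $1-z_{4,\epsilon}$. Proposition~\ref{2F1conn1} with $m=0$, together with $z_{4,\epsilon}\to0$, then gives
\[
  \frac{\lvert x_4^\epsilon-x_1^\epsilon\rvert}{\lvert B_\epsilon\rvert}=\log\frac{1}{z_{4,\epsilon}}-\psi(\alpha_4^\epsilon)-\psi(\alpha_1^\epsilon)+o(\epsilon^{-1}),\qquad
  \frac{\lvert x_3^\epsilon-x_2^\epsilon\rvert}{\lvert B_\epsilon\rvert}=\log\frac{1}{z_{4,\epsilon}}+o(\epsilon^{-1}).
\]
Since $a_1=a_3$, the Euclidean ratio of these two sides is exactly $(p_4-p_1)/(p_3-p_2)$. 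Use $\epsilon\psi(\alpha_i^\epsilon)\to-\pi/\lvert a_{i+1}-a_i\rvert$ and $(p_4-p_3)+(p_2-p_1)=(b_1-b_3)\bigl(\frac{1}{a_4-a_1}+\frac{1}{a_1-a_2}\bigr)$. Solving the resulting linear equation gives $-\epsilon\log z_{4,\epsilon}\to\pi(p_3-p_2)/(b_1-b_3)=\pi l$, as in Lemma~\ref{length-4thpt}.

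\paragraph{Comparing the two routes.} The parallel-pair route is more elementary: like Lemma~\ref{parallel}, it needs only Proposition~\ref{2F1conn1}. The paper's coefficient matching yields an exact identity and reuses the annulus expansions it needs anyway for \eqref{eq:k4p0-1-1} and \eqref{eq:k4int-1}. Note also that if instead $L_2\parallel L_4$, the exponent $\alpha_3^\epsilon+\alpha_4^\epsilon-1$ does not vanish and the argument of \cite{S25} applies unchanged.
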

\begin{rem}
  As we mentioned in \cite{S25}, this convergence in \eqref{eq:k4int-1} is not a uniformly convergence since the domain $\Theta_{int}^{\epsilon}(\delta)$ moves as $\epsilon>0$ varies.
\end{rem}
\begin{figure}[tb]
  \centering
  \begin{tikzpicture}
    \begin{scope}
      \clip (-1,-1) rectangle (6,5);
      \draw[dashed] (0.5,0) arc (0:180:0.5);
      \draw[dashed] (2.5,0) arc (0:180:0.5);
      \draw[dashed] (3.5,0) arc (0:180:3.5);
      \fill[red,opacity=0.3] (0.5,0) arc (0:180:0.5)--(-0.5,0)--cycle;
      \fill[red,opacity=0.3] (2.5,0) arc (0:180:0.5)--(1.5,0)--cycle;
      \draw[red] (2,2)node{$D_{z_{2}}(\delta)$};
      \fill[red,opacity=0.3] (3.5,0)--(5.5,0)--(5.5,6)--(-1,6)--(-3.5,0) arc (180:0:3.5)--cycle;
      \draw[->] (-1,0)--(5.5,0)node[below]{Re};
      \fill[black] (0,0) circle (0.06) node[below]{$0$};
      \fill[black] (2,0) circle (0.06) node[below]{$1$};
      \draw (3.5,0)node[below]{$1/\delta$};
      \draw[red] (0,0.5) node[above]{$D_{z_{3}}(\delta)$};
      \draw[red] (2,0.5) node[above]{$D_{z_{1}}(\delta)$};
    \end{scope}
    \begin{scope}[xshift=200]
      \draw[->] (-1,2.5)--(1,2.5);
      \draw (0,2.5)node[above]{$\psi_{\epsilon}$};
      \draw (0,2.5)node[below]{$\psi_{\epsilon}(z)=z/z_{4,\epsilon}$};
    \end{scope}
    \begin{scope}[xshift=270]
      \clip (-1,-1) rectangle (6,5);
      \draw[dashed] (0.5,0) arc (0:180:0.5);
      \draw[dashed] (2.5,0) arc (0:180:0.5);
      \draw[dashed] (3.5,0) arc (0:180:3.5);
      \draw[dashed] (4.5,0) arc (0:180:4.5);
      \fill[blue,opacity=0.3] (0.5,0) arc (0:180:0.5)--(-0.5,0)--cycle;
      \fill[blue,opacity=0.3] (2.5,0) arc (0:180:0.5)--(1.5,0)--cycle;
      \draw[teal] (4,4)node{$\psi_{\epsilon}(D_{int}^{\epsilon}(\delta))$};
      \fill[green,opacity=0.3] (3.5,0)--(4.5,0) arc (0:180:4.5)--(-4.5,0)--(-3.5,0) arc (180:0:3.5)--cycle;
      \draw[->] (-1,0)--(5.5,0)node[below]{Re};
      \fill[black] (0,0) circle (0.06) node[below]{$0$};
      \fill[black] (2,0) circle (0.06) node[below]{$1$};
      \draw (3.5,0)node[below]{$1/\delta$};
      \draw (4.5,0)node[below]{$\delta/z_{4,\epsilon}$};
      \draw[blue] (0,0.5) node[above]{$D_{0}(\delta)$};
      \draw[blue] (2,0.5) node[above]{$D_{1}(\delta)$};
    \end{scope}
  \end{tikzpicture}
  \caption{The figure of nearby $0$ in the upper half plane in the case $z_{4,\epsilon}\rightarrow0\,(\epsilon\to+0)$ (Reproduced from \cite{S25}).}
  \label{4markedpts}
\end{figure}
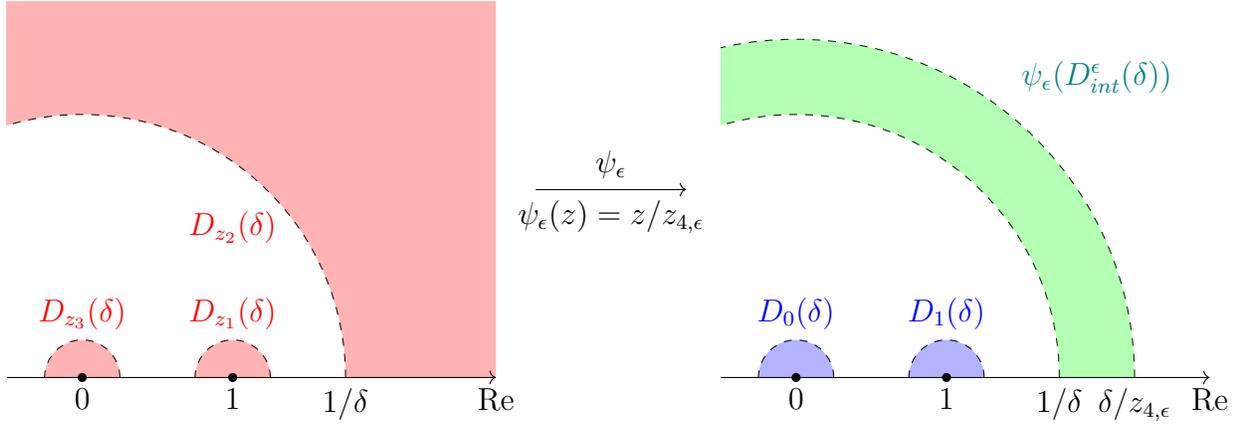
\begin{figure}[tb]
  \centering
  \begin{tikzpicture}
    \begin{scope}
      \clip (-1,-1) rectangle (6,5);
      \fill[green,opacity=0.3] (0.5,0)--(4.5,0)--(4.5,3)--(0.5,3)--cycle;
      \draw[->] (-0.5,0)--(5,0)node[above]{$\tau$};
      \draw[->] (0,-0.5)--(0,4.5)node[left]{$\sigma$};
      \draw (0.5,3)--(4.5,3);
      \draw[dashed] (0.5,0)--(0.5,3);
      \draw[dashed] (4.5,0)--(4.5,3);
      \draw[teal] (2.5,1.5)node{$\Theta_{int}^{\epsilon}(\delta)$};
      \draw (0,0)node[below left]{$O$};
      \draw (0.5,0)node[below]{$\tau_{1,\delta}$};
      \draw (4.5,0)node[below]{$\tau_{2,\delta,\epsilon}$};
      \draw (0,3)node[left]{$1$};
    \end{scope}
    \begin{scope}[xshift=180]
      \draw[->] (-1,2)--(1,2);
      \draw (0,2)node[above]{$\phi_{int}$};
    \end{scope}
    \begin{scope}[xshift=250]
      \clip (-1,-1.25) rectangle (6,6);
      \draw[dashed] (1.5,0) arc (0:180:1.5);
      \draw[dashed] (4,0) arc (0:180:4);
      \draw[teal] (3.7,3.7)node{$\phi_{int}(\Theta_{int}^{\epsilon}(\delta))$};
      \draw[teal] (3.9,3.2)node{$(=D_{int}^{\epsilon}(\delta))$};
      \fill[green,opacity=0.3] (1.5,0)--(4,0) arc (0:180:4)--(-4,0)--(-1.5,0) arc (180:0:1.5)--cycle;
      \draw[->] (-1,0)--(5.5,0)node[below]{Re};
      \draw (1.5,0)node[below]{$\phi_{int}(\tau_{2,\delta,\epsilon},1)$};
      \draw (1.5,-0.5)node[below]{$(=z_{4,\epsilon}/\delta)$};
      \draw (4,0)node[below]{$\phi_{int}(\tau_{1,\delta},1)$};
      \draw (4,-0.5)node[below]{$(=\delta)$};
    \end{scope}
  \end{tikzpicture}
  \caption{The figure of $\phi_{int}$ and $D_{int}^{\epsilon}(\delta)$ in the case $z_{4,\epsilon}\rightarrow+0$ (Here, $\tau_{1,\epsilon}\coloneqq-(\log\delta)/\pi$ and $\tau_{2,\delta,\epsilon}\coloneqq-(\log(z_{4,\epsilon}/\delta))/\pi$) (Reproduced from \cite{S25}).}
  \label{fig-intedge}
\end{figure}
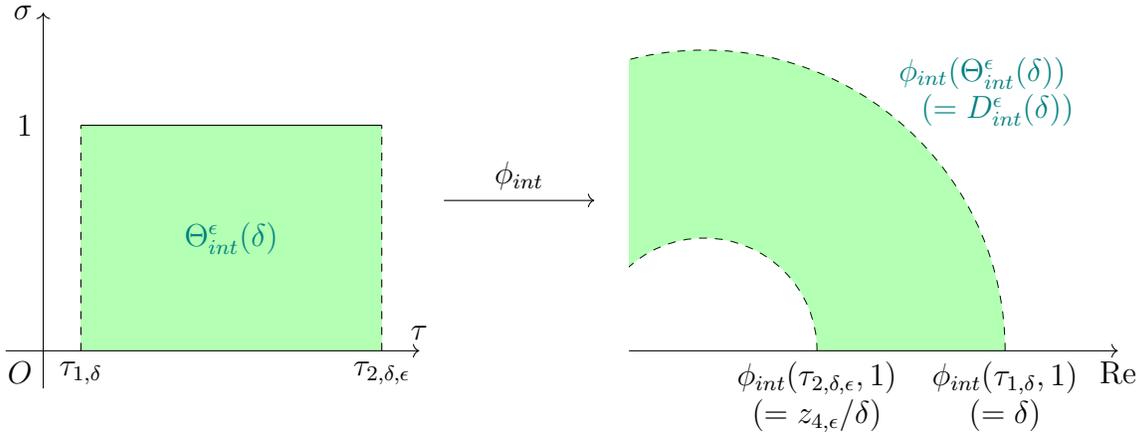
Since we proved this theorem in the generic case in \cite{S25}, we will prove the non-generic case in subsection 3.4, where is quite different from that in \cite{S25}. The significant difference is whether Lemma \ref{F1conn1} can be apply or not. Lemma \ref{F1conn1} is used to compute the analytic continuation of $w_\epsilon$. However, in the non-generic case above, we are forced to substitute negative integers or zero to gamma functions in numerators. Therefore we have to use the other formula for analytic continuation of $F_1$. Therefore, we will induce the other formula by using Proposition \ref{2F1conn1} in the next section.

We next consider the case $(p_3-p_1)(p_4-p_2)=0$. In this case, we get $z_{4,\epsilon}\to 1/2$ as $\epsilon\to+0$ by using Proposition \ref{mpocm1},\ref{mpocm2},\ref{mpocm3} and Lemma \ref{2F1conn2}. We will show it in subsection 4.3. Then we divide the upper half plane into five regions as follows (see Figure \ref{fig:k4upperhalf-2}). 
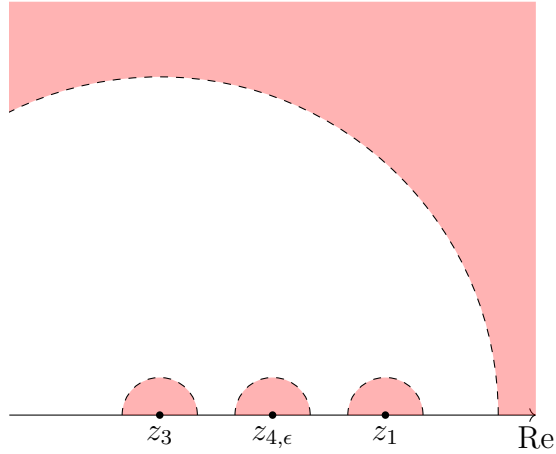
\begin{figure}[tb]
  \centering
  \begin{tikzpicture}
    \clip (-2,-0.75) rectangle (5.75,5.5);
    \draw[->]  (-2,0)--(5,0) node [below]{Re};
    \fill[red,opacity=0.3] (0.5,0) arc (0:180:0.5)--(-0.5,0)--cycle;
    \fill[red,opacity=0.3] (2,0) arc (0:180:0.5)--(1,0)--cycle;
    \fill[red,opacity=0.3] (3.5,0) arc (0:180:0.5)--(2.5,0)--cycle;
    \fill[red,opacity=0.3] (5,0)--(5,5.5)--(-4,5.5)--(-4.5,0) arc (180:0:4.5)--cycle;
    \fill[black] (0,0) circle (0.05) node[below]{$z_{3}$};
    \fill[black] (1.5,0) circle (0.05) node[below]{$z_{4,\epsilon}$};
    \draw[dashed] (0.5,0) arc (0:180:0.5);
    \draw[dashed] (2,0) arc (0:180:0.5);
    \draw[dashed] (3.5,0) arc (0:180:0.5);
    \fill[black] (3,0) circle (0.05) node[below]{$z_{1}$};
    \draw[dashed] (4.5,0) arc (0:180:4.5);
  \end{tikzpicture}
  \caption{The figure of the upper half plane divided by five regions in the case $(p_3-p_1)(p_4-p_2)=0$.}
  \label{fig:k4upperhalf-2}
\end{figure}
\[
  \overline{\mathbb{H}}=D_{z_{3}}(\delta)\cup D_{z_{4,\epsilon}}(\delta)\cup D_{z_{1}}(\delta)\cup D_{z_{2}}(\delta)\cup D_\epsilon(\delta).
\]
We can take a real positive number $\delta>0$ such that $D_{z_{i}}(\delta)\cap D_{z_{j}}(\delta)=\emptyset$ for $i,j\in\{1,2,3,4\},i\neq j$. The set $D_\epsilon(\delta)$ is then given by 
\[
  D_\epsilon(\delta)\coloneqq\overline{\mathbb{H}}\setminus(D_{z_{3}}(\delta)\cup D_{z_{4,\epsilon}}(\delta)\cup D_{z_{1}}(\delta)\cup D_{z_{2}}(\delta)).
\]
Four regions $D_{z_{1}}(\delta),D_{z_{2}}(\delta),D_{z_{3}}(\delta),D_{z_{4,\epsilon}}(\delta)$ are red regions in Figure \ref{fig:k4upperhalf-2}, and these regions correspond to each exterior edges of tree. The white region in Figure \ref{fig:k4upperhalf-2} is $D_\epsilon(\delta)$, and this region corresponds to the interior vertex. We give the transformations between the stripe $\Theta_{ext}$ and the region $D_{z_{i}}(\delta)$ as $\phi_{z_{i},\delta}$. The following theorem is the result in the case $(p_3-p_1)(p_4-p_2)=0$.
\begin{thm}
  \label{k4corr2}
  Let $\delta>0$ be a real number such that $D_{z_{4,\epsilon}}(\delta)\cap D_{z_j}(\delta)=\emptyset$ for sufficiently small $\epsilon>0$ and $j=1,2,3$. The point $p_{0}\in \mathbb{R}$ is one of $p_{1},p_{2},p_3$ or $p_4$ such that the Morse index of $p_{0}$ is zero. We set $I_{i}$ as the restriction of $I$ to the external edge $e_{i}$ of $T$. If $(p_{3}-p_{1})(p_{4}-p_{2})=0$, then we obtain 
  \begin{gather}
    \lim_{\epsilon\to+0}\sup_{z\in D_\epsilon(\delta)}\left\lvert w_{\epsilon}(z)-p_{0}\right\rvert=0,\label{eq:k4p0-2}\\
    \lim_{\epsilon\to+0}\sup_{(\tau,\sigma)\in\Theta_{ext}}\left\lvert w_{\epsilon}\circ \phi_{z_{i},\delta}(\tau,\sigma)-I_{i}(\epsilon\tau)\right\rvert=0\ (i=1,2,3,4).\label{eq:k4ext-2}
  \end{gather}
\end{thm}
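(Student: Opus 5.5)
The plan is to follow the architecture of the generic case in \cite{S25}, with two changes. The position of $z_{4,\epsilon}$ is controlled by conformal moduli instead of by the side-ratio equation. And since $z_{4,\epsilon}$ no longer degenerates, no internal strip is needed, so the middle region $D_\epsilon(\delta)$ stays a fixed compact set.

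\emph{Step 1: $z_{4,\epsilon}\to1/2$.} Assume first that $x_1^\epsilon x_2^\epsilon x_3^\epsilon x_4^\epsilon$ is a parallelogram.
\begin{itemize}
\item Prop.~\ref{mpocm1}--\ref{mpocm3} will be used to show that $M(Q_\epsilon;x_3^\epsilon,x_4^\epsilon,x_1^\epsilon,x_2^\epsilon)$ is monotone in $\epsilon$, so its limit exists in $[0,\infty]$.
\item I will compute the ratio of adjacent sides in two ways: Euclidean geometry, where the ratio tends to a finite positive constant, and the Schwarz--Christoffel integral. By Prop.~\ref{2F1} and Prop.~\ref{2F1conn1}/Lemma~\ref{2F1conn2}, the second side is a combination of Gamma functions and ${}_2F_1$ in $z_{4,\epsilon}$ with logarithmic terms.
\item If $z_{4,\epsilon}$ tended to $0$ or $1$, the logarithmic term would force the ratio to $0$ or $\infty$, by Cor.~\ref{angleslim} ($\epsilon\Gamma(\alpha_i^\epsilon)\to\pi/|a_{i+1}-a_i|$). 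This contradicts the Euclidean computation.
\item Matching the leading coefficients then forces the limiting modulus to be $1$, i.e. $z_{4,\epsilon}\to1/2$ by Lemma~\ref{cm1/2}.
\end{itemize}
For a general convex quadrilateral with $p_1=p_3$ (or $p_2=p_4$), I will move one vertex along the extension of a side, as in Prop.~\ref{mpocm3}. This sandwiches the modulus between those of two parallelograms built from the same lines, and the squeeze theorem again gives limit $1$.

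\emph{Step 2: size of the Schwarz--Christoffel constant.} I will write
\[
w_\epsilon(z)=x_3^\epsilon+B_\epsilon\int_0^z\zeta^{\alpha_3^\epsilon-1}(\zeta-z_{4,\epsilon})^{\alpha_4^\epsilon-1}(\zeta-1)^{\alpha_1^\epsilon-1}d\zeta .
\]
I will determine $|B_\epsilon|$ from one side length, say $|x_3^\epsilon-x_4^\epsilon|\to|p_3-p_4|$. Since $z_{4,\epsilon}$ stays in a compact subset of $(0,1)$, the integral over $[0,z_{4,\epsilon}]$ is dominated by the endpoint singularities at the two vertices whose angles tend to $0$. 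Each contributes $\approx1/\alpha^\epsilon\sim\pi/(\epsilon|\Delta a|)$. Hence $B_\epsilon=O(\epsilon)$, with an explicit limit of $B_\epsilon/\epsilon$.

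\emph{Step 3: the middle region, i.e. \eqref{eq:k4p0-2}.} On $D_\epsilon(\delta)$ every factor $|\zeta-z_j|^{\alpha_j^\epsilon-1}$ is bounded by a constant depending only on $\delta$, because $z_{4,\epsilon}\to1/2$ keeps the discs separated. So along a path inside $D_\epsilon(\delta)$,
\[
|w_\epsilon(z)-w_\epsilon(z')|\le C_\delta|B_\epsilon|\to0 .
\]
It then suffices to show that $w_\epsilon$ at one boundary point tends to $p_0$, for instance at $z=z_{4,\epsilon}+\delta$ on the side lying in $L_1^\epsilon$. I will do this by splitting the side length $|x_4^\epsilon-x_1^\epsilon|$ into the part near $z_{4,\epsilon}$ and the part near $z_1$. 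Each part is $B_\epsilon\cdot\Gamma$-type $\approx$ (distance from $p_4$, resp. $p_1$, to the common point). Comparing the Gamma asymptotics shows the splitting point tends to the index-zero point $p_1=p_3$ (or $p_2=p_4$).

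\emph{Step 4: external edges, i.e. \eqref{eq:k4ext-2}.} Near $z_i$, I will expand
\[
w_\epsilon(z)=x_i^\epsilon+B_\epsilon(z-z_i)^{\alpha_i^\epsilon}h_{i,\epsilon}(z),
\]
where $h_{i,\epsilon}$ is an $F_1$/${}_2F_1$ power series that is uniformly close to a constant on $D_{z_i}(\delta)$. Substituting $z=\phi_{z_i,\delta}(\tau,\sigma)$ gives the factor $e^{\pi\alpha_i^\epsilon\tau}$ (or its analogue for angles near $\pi$). This is to be compared with the explicit gradient curve
\[
I_i(\epsilon\tau)=p_0+(p_i-p_0)e^{\epsilon|a_{i+1}-a_i|\tau}.
\]
Uniformity over $\tau\in(-\infty,0)$ follows from Lemma~\ref{anglesineq} and the elementary bound $\sup_{\tau<0}|e^{A\tau}-e^{A'\tau}|\le|A-A'|/(e\min(A,A'))$, together with Step 3 at $\tau=0$.

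The main obstacle is Step 1, specifically identifying the limit as $1$ and not just showing that it is finite. I would first handle the parallelogram by a symmetry argument, using the point reflection through the centre to show invariance of the marked points. If that fails, I would fall back on comparing the logarithmic coefficients from Lemma~\ref{2F1conn2}.
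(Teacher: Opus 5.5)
\textbf{Steps 2--4 are sound; Step 1 has a genuine gap.} Once $z_{4,\epsilon}$ is known to stay in a compact subset of $(0,1)$, Steps 2--4 are cleaner than the paper's region-by-region estimates. The oscillation of $w_\epsilon$ over $D_\epsilon(\delta)$ is at most $C_\delta\lvert B_\epsilon\rvert=O(\epsilon)$. Next to an obtuse vertex ($\alpha_i^\epsilon\to1$) the integrand is uniformly integrable, so $w_\epsilon$ stays within $O(\lvert B_\epsilon\rvert)$ of $x_1^\epsilon$ (or $x_3^\epsilon$), which tends to $p_0$. There are two slips. The gradient line is $I_i(\epsilon\tau)=p_i+(p_0-p_i)e^{\epsilon\lvert a_{i+1}-a_i\rvert\tau}$; you have $p_0$ and $p_i$ interchanged. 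Also, each side has only one endpoint whose angle tends to $0$. The real gap is Step 1, where you located it. Strictly, the two limits only need $z_{4,\epsilon}$ to stay away from $0$ and $1$, which is also what makes the choice of $\delta$ in the statement possible. The proposal does not establish even that.

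\textbf{Why Step 1 fails as written.} Your first idea for the parallelogram cannot work. The point reflection swaps $x_1\leftrightarrow x_3$ and $x_2\leftrightarrow x_4$, and $M(Q;x_1,x_2,x_3,x_4)=M(Q;x_3,x_4,x_1,x_2)$ holds for \emph{every} quadrilateral. In the normalization of Lemma~\ref{parallel} this swap is $z\mapsto\xi(z-1)/(z-\xi)$, which is an automorphism of $\mathbb{H}$ for every $\xi\in(0,1)$. So central symmetry says nothing about $z_{4,\epsilon}$: every rectangle is centrally symmetric, yet rectangles have arbitrary modulus. The symmetry that forces modulus $1$ is reflection in a diagonal, i.e.\ a rhombus, and these slivers are not rhombi.

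The fallback, as sketched, also fails in two places. Write $G_\theta(x)={}_2F_1(\theta,1-\theta,1;x)$ with $\theta=\theta^\epsilon$; the Schwarz--Christoffel side ratio is then $G_\theta(1-\xi_\epsilon)/G_\theta(\xi_\epsilon)$.
\begin{itemize}
\item As $\xi_\epsilon\to0$ this ratio behaves like $1-\theta\log\xi_\epsilon$. It tends to $1$, not $\infty$, whenever $\epsilon\log\xi_\epsilon\to0$, so your exclusion of $z_{4,\epsilon}\to0,1$ is invalid.
\item Both ratios tend to $1$ automatically, so matching leading coefficients determines nothing. The limit $\xi_0$ is fixed at order $\epsilon$, via $G_\theta(1-\xi)/G_\theta(\xi)=1+\theta\log((1-\xi)/\xi)+O(\theta^2)$ for $\xi$ in compact sets. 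This must be matched against the Euclidean ratio to precision $1+o(\epsilon)$. Here that ratio is $\cos\theta_2^\epsilon/\cos\theta_1^\epsilon=1+O(\epsilon^2)$. A ratio $1+c\epsilon$ with $c\neq0$ would give a limit other than $1/2$, so ``tends to a finite positive constant'' is not enough.
\end{itemize}

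\textbf{How the paper closes it.} It uses exactly these missing ingredients:
\begin{itemize}
\item monotonicity of $\xi_\epsilon$ in $\epsilon$ from Prop.~\ref{mpocm3}, giving $\xi_0<1$;
\item an inscribed rhombus of modulus $1$, giving $\xi_0\ge1/2$;
\item the side-ratio identity expanded via Prop.~\ref{2F1conn1}, where the $1/\theta^\epsilon$ terms cancel and the remainder yields $\log(1-\xi_0)=\log\xi_0$.
\end{itemize}

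\textbf{A shorter repair of your route.} $G_\theta$ has positive Taylor coefficients, so $G_\theta(1-\xi)/G_\theta(\xi)$ is strictly decreasing in $\xi$. Evaluating the first-order expansion at $\xi=1/2\pm\eta$ then traps $\xi_\epsilon$ in $(1/2-\eta,1/2+\eta)$. This needs no separate exclusion step and no monotonicity in $\epsilon$.
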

\subsection{Power series representations of holomorphic disks.}
We here study the power series representation of a Schwarz-Christoffel map from the upper half plane to a convex quadrilateral. Let $P$ be the interior of the convex quadrilateral $x_{1}x_{2}x_{3}x_{4}$ having vertices $x_{1},x_{2},x_{3},x_{4}$ in counterclockwise order and let $\pi\alpha_{1},\pi\alpha_{2},\pi\alpha_{3},\pi\alpha_{4}$ be the corresponding interior angles. We define $w$ as the Schwarz-Christoffel map from the upper half plane to $P$ such that $w(0)=x_{1},w(1)=x_{3},w(\infty)=x_{4}$. We set $\xi\in(0,1)$ as $\xi=w^{-1}(x_{2})$. In Lemma 4.3 in \cite{S25}, we assumed $\alpha_i+\alpha_{i+1}\neq 1$ for $i=1,2,3,4$. When $\alpha_i+\alpha_{i+1}=1$ holds for some $i=1,2,3,4$, $(\rm{iii})$ does not hold though we still have $(\rm{i}),(\rm{ii}),(\rm{iv})$ in \cite[Lemma 4.3]{S25}. However, only $(\rm{iii})$ does not hold in the case $\alpha_1+\alpha_2=1$. We first rearrange $(\rm{iii})$ in this case.
\begin{lem}
  \label{sqr-rep1}
  When $\alpha_1+\alpha_2=1$ holds, we obtain the following on $\{z\in\overline{\mathbb{H}}\mid\xi<\left\lvert z\right\rvert<1\}$:
  \begin{align*}
    w(z)&=x_{1}+\dfrac{\sin\pi\alpha_1}{\pi}\dfrac{x_{2}-x_{1}}{_{2}F_{1}(\alpha_{1},1-\alpha_{3},1;\xi)}\left[-e^{\pi\alpha_1i}\sum_{\substack{k,l\geq0\\k\neq l}}\dfrac{(\alpha)_k(1-\alpha_3)_l}{k!l!}\dfrac{1}{k-l}\left(\dfrac{z}{\xi}\right)^{-k}z^l\right.\\
    &\left.\hspace{1.5cm}+e^{\pi\alpha_1i}\sum_{n\geq0}\left[\psi(n+1)-\psi(\alpha_1+n)+\log\left(-\dfrac{z}{\xi}\right)\right]\dfrac{(\alpha_1)_n(1-\alpha_3)_n}{n!n!}\xi^n\right].
  \end{align*}
\end{lem}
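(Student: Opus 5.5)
The plan is to compute $w$ directly from the Schwarz--Christoffel integral (Theorem \ref{SCmap}). With $w(0)=x_1$, $w(\xi)=x_2$, $w(1)=x_3$ and $w(\infty)=x_4$,
\[
  w(z)=x_1+B\int_0^z\zeta^{\alpha_1-1}(\zeta-\xi)^{\alpha_2-1}(\zeta-1)^{\alpha_3-1}\,d\zeta ,
\]
and the hypothesis $\alpha_1+\alpha_2=1$ turns the middle factor into $(\zeta-\xi)^{-\alpha_1}$. The steps are: determine $B$, expand the integrand on the annulus, and fix the integration constant by matching with the expansion near $0$.

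\textbf{Step 1: the constant $B$.} I evaluate $x_2-x_1$ by integrating along $[0,\xi]$ with the branches of Theorem \ref{SCmap}. This produces the phase factors $e^{\pi i(\alpha_2-1)}(-1)^{\alpha_3-1}$. After the substitution $t=\xi s$, the power of $\xi$ is $\xi^{\alpha_1+\alpha_2-1}=1$. Proposition \ref{2F1} then gives
\[
  x_2-x_1=B\,(\text{phase})\,\Gamma(\alpha_1)\Gamma(1-\alpha_1)\,{}_2F_1(\alpha_1,1-\alpha_3,1;\xi).
\]
Since $\Gamma(\alpha_1)\Gamma(1-\alpha_1)=\pi/\sin\pi\alpha_1$, this accounts for the prefactor $\frac{\sin\pi\alpha_1}{\pi}\frac{x_2-x_1}{{}_2F_1(\alpha_1,1-\alpha_3,1;\xi)}$ in the statement.

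\textbf{Step 2: termwise expansion on the annulus.} For $\xi<|\zeta|<1$ I use the binomial series
\[
  (\zeta-\xi)^{-\alpha_1}=\zeta^{-\alpha_1}\sum_{k\ge0}\frac{(\alpha_1)_k}{k!}\Bigl(\frac{\xi}{\zeta}\Bigr)^k,\qquad
  (1-\zeta)^{\alpha_3-1}=\sum_{l\ge0}\frac{(1-\alpha_3)_l}{l!}\zeta^l ,
\]
keeping track of the branch phases. Because $\alpha_1+\alpha_2=1$, the powers $\zeta^{\alpha_1-1}\zeta^{-\alpha_1}$ combine to $\zeta^{-1}$, so the integrand is the absolutely convergent Laurent series
\[
  \zeta^{-1}\sum_{k,l}\frac{(\alpha_1)_k(1-\alpha_3)_l}{k!\,l!}\,\xi^k\zeta^{\,l-k}.
\]
Integrating term by term on the annulus, the terms with $k\neq l$ give $\frac{1}{l-k}\,\xi^k\zeta^{l-k}$; this is the double sum in the statement, whose sign comes from writing $1/(k-l)$. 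The diagonal terms $k=l$ give $\log\zeta$ times $\sum_n\frac{(\alpha_1)_n(1-\alpha_3)_n}{n!n!}\xi^n={}_2F_1(\alpha_1,1-\alpha_3,1;\xi)$. Hence $w$ equals the claimed expression up to an additive constant, with $\log(-z/\xi)$ absorbing the branch choice.

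\textbf{Step 3: the integration constant (main obstacle).} It remains to show that the constant equals
\[
  e^{\pi\alpha_1 i}\sum_n\bigl[\psi(n+1)-\psi(\alpha_1+n)\bigr]\frac{(\alpha_1)_n(1-\alpha_3)_n}{n!n!}\xi^n .
\]
I would match with the expansion valid for $|z|<\xi$, namely \cite[Lemma 4.3(i)]{S25}, where $w-x_1$ is $z^{\alpha_1}$ times an $F_1$-series in $(z/\xi,z)$. Expanding that $F_1$ in powers of $z$, the coefficient of $z^l$ is, up to Gamma factors, a ${}_2F_1(\alpha_1,\alpha_1+l,\alpha_1+l+1;z/\xi)$, which is exactly the shape of Lemma \ref{2F1conn2} with $m=l$ and "$l$"$=0$. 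Applying Lemma \ref{2F1conn2} continues each coefficient to $|z|>\xi$. It produces:
\begin{itemize}
  \item the negative powers $(z/\xi)^{-n}$, which reproduce the $k>l$ terms of Step 2;
  \item a finite sum, which reproduces the $k<l$ terms;
  \item a single logarithmic term $\log(-z/\xi)+h_0'$.
\end{itemize}
With "$l$"$=0$ and $m=l$, $h_0'=\psi(1+l)+\psi(1)-\psi(\alpha_1+l)-\psi(1)=\psi(l+1)-\psi(\alpha_1+l)$, which is the required constant.

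The delicate points are these:
\begin{itemize}
  \item justifying the interchange of the sum over $l$ with the continuation, which I would handle by restricting first to real $z\in(\xi,1)$ approached from $\mathbb{H}$ and using absolute convergence;
  \item tracking the phase $e^{\pi\alpha_1 i}$ and the factor $(-z)^{-\alpha_1}$ in Lemma \ref{2F1conn2} against the branches of Theorem \ref{SCmap}.
\end{itemize}
As an independent check I would verify the resulting formula against $w(\xi)=x_2$ in the limit $z\to\xi$.
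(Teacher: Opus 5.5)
Your proposal is correct, and its decisive Step 3 is exactly the paper's proof. The paper expands the $F_1$ from Lemma 4.3(i) of \cite{S25} in powers of $z$, continues each ${}_2F_1(\alpha_1,\alpha_1+n,\alpha_1+n+1;z/\xi)$ to $|z|>\xi$ with Lemma \ref{2F1conn2} at $m=n$, $l=0$ (so $h_0'=\psi(n+1)-\psi(\alpha_1+n)$), and regroups the three resulting pieces into the $k\neq l$ double sum and the logarithmic sum. Since that computation already yields the whole formula, your Steps 1--2 (fixing $B$ and integrating the Laurent series on the annulus) are an independent check rather than a needed ingredient; Step 1 does usefully confirm the normalization ${}_2F_1(\alpha_1,1-\alpha_3,1;\xi)$ in the statement, where the paper's displayed proof writes ${}_2F_1(\alpha_1,1,1-\alpha_3;\xi)$.
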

\begin{proof}
  From Lemma 4.3 $(\rm{i})$ \cite{S25}, we have
  \begin{align}
    \label{z0rep}
    w(z)&=x_{1}+\dfrac{1}{\Gamma(\alpha_{1}+1)\Gamma(1-\alpha_1)}\dfrac{x_{2}-x_{1}}{_{2}F_{1}(\alpha_{1},1,1-\alpha_{3};\xi)}\left(\dfrac{z}{\xi}\right)^{\alpha_{1}}\notag\\
    &\hspace{5cm}\cdot F_{1}\left(\alpha_{1},\alpha_1,1-\alpha_{3},\alpha_{1}+1;\dfrac{z}{\xi},z\right).
  \end{align}
  In this proof, we consider the analytic continuation of $F_1$ in (\ref{z0rep}). We first have 
  \begin{align*}
    &F_{1}\left(\alpha_{1},1-\alpha_{2},1-\alpha_{3},\alpha_{1}+1;\dfrac{z}{\xi},z\right)\\
    &=\sum_{m,n\geq0}\dfrac{(\alpha_1)_{m+n}(1-\alpha_2)_m(1-\alpha_3)_n}{(\alpha_1+1)_{m+n}m!n!}\left(\dfrac{z}{\xi}\right)^mz^n\\
    &=\sum_{n\geq0}\dfrac{(\alpha_1)_n(1-\alpha_3)_n}{(\alpha_1+1)_nn!}\cdot\left(\sum_{m\geq0}\dfrac{(\alpha_1+n)_m(1-\alpha_2)_m}{(\alpha_1+1+n)_mm!}\left(\dfrac{z}{\xi}\right)^m\right)z^n\\
    &=\sum_{n\geq0}\ _2F_1\left(\alpha_1+n,1-\alpha_2,\alpha_1+1+n;\dfrac{z}{\xi}\right)\cdot\dfrac{(\alpha_1)_n(1-\alpha_3)_n}{(\alpha_1+1)_nn!}z^n.
  \end{align*}
  We here set $\alpha_2=1-\alpha_1$. From Lemma \ref{2F1conn2}, we have 
  \begin{align*}
    &_2F_1\left(\alpha_1+n,\alpha_1,\alpha_1+1+n;\dfrac{z}{\xi}\right)\cdot\dfrac{\Gamma(\alpha_1+n)}{\Gamma(\alpha_1+n+1)}\\
    =&(-1)^{n+1}\left(-\dfrac{z}{\xi}\right)^{-\alpha_1-n}\sum_{j=1}^\infty\dfrac{(\alpha_1)_{j+n}(j-1)!}{(j+n)!j!}\left(\dfrac{z}{\xi}\right)^{-j}+\left(-\dfrac{z}{\xi}\right)^{-\alpha_1}\sum_{j=0}^{n-1}\dfrac{(n-j-1)!(\alpha_1)_j}{(n-j)!j!}\left(\dfrac{z}{\xi}\right)^{-j}\\
    &+\left(-\dfrac{z}{\xi}\right)^{-\alpha_1-n}\dfrac{1}{n!}\dfrac{(\alpha_1)_{n}(-n)_{n}}{n!}\left[\log\left(-\dfrac{z}{\xi}\right)+h_0'\right]\\
    =&(-1)^{n+1}\left(-\dfrac{z}{\xi}\right)^{-\alpha_1-n}\sum_{j=1}^\infty\dfrac{(\alpha_1)_{j+n}(j-1)!}{(j+n)!j!}\left(\dfrac{z}{\xi}\right)^{-j}+\left(-\dfrac{z}{\xi}\right)^{-\alpha_1}\sum_{j=0}^{n-1}\dfrac{(n-j-1)!(\alpha_1)_j}{(n-j)!j!}\left(\dfrac{z}{\xi}\right)^{-j}\\
    &+(-1)^n\left(-\dfrac{z}{\xi}\right)^{-\alpha_1-n}\dfrac{(\alpha_1)_n}{n!}\left[\log\left(-\dfrac{z}{\xi}\right)+\psi(n+1)-\psi(\alpha_1+n)\right]
  \end{align*}
  for $\left\lvert z\right\rvert<1<\left\lvert z/\xi\right\rvert$, i.e. $\xi<\left\lvert z\right\rvert<1$, and $\left\lvert \arg(-z/\xi)\right\rvert<\pi$. Hence, we obtain
  \begin{align*}
    &F_{1}\left(\alpha_{1},\alpha_1,1-\alpha_{3},\alpha_{1}+1;\dfrac{z}{\xi},z\right)\\
    &=\sum_{n\geq0}\ _2F_1\left(\alpha_1+n,\alpha_1,\alpha_1+1+n;\dfrac{z}{\xi}\right)\cdot\dfrac{(\alpha_1)_n(1-\alpha_3)_n}{(\alpha_1+1)_nn!}z^n\\
    &=\sum_{n\geq0}\dfrac{\Gamma(\alpha_1+n+1)}{\Gamma(\alpha_1+n)}\left[(-1)^{n+1}\left(-\dfrac{z}{\xi}\right)^{-\alpha_1-n}\sum_{j=1}^\infty\dfrac{(\alpha_1)_{j+n}(j-1)!}{(j+n)!j!}\left(\dfrac{z}{\xi}\right)^{-j}\right.\\
    &+\left(-\dfrac{z}{\xi}\right)^{-\alpha_1}\sum_{j=0}^{n-1}\dfrac{(n-j-1)!(\alpha_1)_j}{(n-j)!j!}\left(\dfrac{z}{\xi}\right)^{-j}\\
    &\left.+(-1)^n\left(-\dfrac{z}{\xi}\right)^{-\alpha_1-n}\dfrac{(\alpha_1)_n}{n!}\left[\log\left(-\dfrac{z}{\xi}\right)+\psi(n+1)-\psi(\alpha_1+n)\right]\right]\cdot\dfrac{(\alpha_1)_n(1-\alpha_3)_n}{(\alpha_1+1)_nn!}z^n.
  \end{align*}
  We have
  \begin{align*}
    &\sum_{n\geq0}\sum_{j\geq1}\dfrac{\Gamma(\alpha_1+n+1)}{\Gamma(\alpha_1+n)}(-1)^{n+1}\left(-\dfrac{z}{\xi}\right)^{-\alpha_1-n}\dfrac{(\alpha_1)_{j+n}(j-1)!}{(j+n)!j!}\left(\dfrac{z}{\xi}\right)^{-j}\dfrac{(\alpha_1)_n(1-\alpha_3)_n}{(\alpha_1+1)_nn!}z^n\\
    &=-\left(-\dfrac{z}{\xi}\right)^{-\alpha_1}\sum_{n\geq0}\sum_{j\geq1}\dfrac{\Gamma(\alpha_1+n+1)}{\Gamma(\alpha_1+n)}\dfrac{(\alpha_1)_{j+n}(j-1)!}{(j+n)!j!}\dfrac{(\alpha_1)_n(1-\alpha_3)_n}{(\alpha_1+1)_nn!}\left(\dfrac{z}{\xi}\right)^{-j-n}z^n\\
    &=-\alpha_1\left(-\dfrac{z}{\xi}\right)^{-\alpha_1}\sum_{n\geq0}\sum_{j\geq1}\dfrac{(\alpha_1)_{j+n}(1-\alpha_3)_n}{(j+n)!n!}\dfrac{1}{j}\left(\dfrac{z}{\xi}\right)^{-j-n}z^n\\
    &=-\alpha_1\left(-\dfrac{z}{\xi}\right)^{-\alpha_1}\sum_{l\geq0}\sum_{k\geq l+1}\dfrac{(\alpha_1)_{k}(1-\alpha_3)_l}{k!l!}\dfrac{1}{k-l}\left(\dfrac{z}{\xi}\right)^{-k}z^l
  \end{align*}
  and
  \begin{align*}
    &\sum_{n\geq0}\dfrac{\Gamma(\alpha_1+n+1)}{\Gamma(\alpha_1+n)}\left(-\dfrac{z}{\xi}\right)^{-\alpha_1}\sum_{j=0}^{n-1}\dfrac{(n-j-1)!(\alpha_1)_j}{(n-j)!j!}\left(\dfrac{z}{\xi}\right)^{-j}\dfrac{(\alpha_1)_n(1-\alpha_3)_n}{(\alpha_1+1)_nn!}z^n\\
    &=\alpha_1\left(-\dfrac{z}{\xi}\right)^{-\alpha_1}\sum_{n\geq0}\sum_{j=0}^{n-1}\dfrac{(\alpha_1)_j(1-\alpha_3)_n}{j!n!}\dfrac{1}{n-j}\left(\dfrac{z}{\xi}\right)^{-j}z^n
  \end{align*}
  and 
  \begin{align*}
    &\sum_{n\geq0}\dfrac{\Gamma(\alpha_1+n+1)}{\Gamma(\alpha_1+n)}(-1)^n\left(-\dfrac{z}{\xi}\right)^{-\alpha_1-n}\dfrac{(\alpha_1)_n}{n!}\\
    &\hspace{3cm}\cdot\left[\log\left(-\dfrac{z}{\xi}\right)+\psi(n+1)-\psi(\alpha_1+n)\right]\dfrac{(\alpha_1)_n(1-\alpha_3)_n}{(\alpha_1+1)_nn!}z^n\\
    &=\alpha_1\left(-\dfrac{z}{\xi}\right)^{-\alpha_1}\sum_{n\geq0}\left[\log\left(-\dfrac{z}{\xi}\right)+\psi(n+1)-\psi(\alpha_1+n)\right]\dfrac{(\alpha_1)_n(1-\alpha_3)_n}{n!n!}\xi^n.
  \end{align*}
  These series converge absolutely in the same time when $\xi<\left\lvert z\right\rvert<1$ holds. Then we have
  \begin{align*}
    &F_{1}\left(\alpha_{1},\alpha_1,1-\alpha_{3},\alpha_{1}+1;\dfrac{z}{\xi},z\right)\\
    =&-\alpha_1\left(-\dfrac{z}{\xi}\right)^{-\alpha_1}\sum_{\substack{k,l\geq0\\ k\neq l}}\dfrac{(\alpha_1)_{k}(1-\alpha_3)_l}{k!l!}\dfrac{1}{k-l}\left(\dfrac{z}{\xi}\right)^{-k}z^l\\
    &+\alpha_1\left(-\dfrac{z}{\xi}\right)^{-\alpha_1}\sum_{n\geq0}\left[\log\left(-\dfrac{z}{\xi}\right)+\psi(n+1)-\psi(\alpha_1+n)\right]\dfrac{(\alpha_1)_n(1-\alpha_3)_n}{n!n!}\xi^n.
  \end{align*}
  Here, the argument of $-z/\xi$ of the factors $(-z/\xi)^*$ is assigned to be zero if $z$ is a real number which satisfies $-\infty<z/\xi<z<0$. Since we have $\arg z=\arg(z/\xi)=0$ when $z$ is positive real number and $0<z<z/\xi<\infty$ holds, we set $-z/\xi=\exp[-i\pi]\cdot z/\xi$. We finally obtain
  \begin{align*}
    w(z)&=x_{1}+\dfrac{1}{\Gamma(\alpha_{1}+1)\Gamma(1-\alpha_1)}\dfrac{x_{2}-x_{1}}{_{2}F_{1}(\alpha_{1},1,1-\alpha_{3};\xi)}\left(\dfrac{z}{\xi}\right)^{\alpha_{1}}\\
    &\cdot\left[-\alpha_1\left(\dfrac{z}{\xi}e^{-\pi i}\right)^{-\alpha_1}\sum_{\substack{k,l\geq0\\k\neq l}}\dfrac{(\alpha)_k(1-\alpha_3)_l}{k!l!}\dfrac{1}{k-l}\left(\dfrac{z}{\xi}\right)^{-k}z^l\right.\\
    &\left.\hspace{1cm}+\alpha_1\left(\dfrac{z}{\xi}e^{-\pi i}\right)^{-\alpha_1}\sum_{n\geq0}\left[\psi(n+1)-\psi(\alpha_1+n)+\log\left(\dfrac{z}{\xi}e^{-\pi i}\right)\right]\dfrac{(\alpha_1)_n(1-\alpha_3)_n}{n!n!}\xi^n\right]\\
    &=x_{1}+\dfrac{\sin\pi\alpha_1}{\pi}\dfrac{x_{2}-x_{1}}{_{2}F_{1}(\alpha_{1},1,1-\alpha_{3};\xi)}\\
    &\cdot\left[-e^{\pi\alpha_1i}\sum_{\substack{k,l\geq0\\k\neq l}}\dfrac{(\alpha)_k(1-\alpha_3)_l}{k!l!}\dfrac{1}{k-l}\left(\dfrac{z}{\xi}\right)^{-k}z^l\right.\\
    &\left.\hspace{1cm}+e^{\pi\alpha_1i}\sum_{n\geq0}\left[\psi(n+1)-\psi(\alpha_1+n)+\log z-\log\xi-\pi i\right]\dfrac{(\alpha_1)_n(1-\alpha_3)_n}{n!n!}\xi^n\right].
  \end{align*}
\end{proof}
If we set $w'(z)\coloneqq w(z/\xi)$, then we obtain the Schwarz-Christoffel map $w'$ which maps $0,1,1/\xi,\infty$ to $x_{1},x_{2},x_{3},x_{4}$ respectively. 
\begin{lem}
  \label{sqr-rep2}
  Let $w$ be a Schwarz-Christoffel map from the upper half plane to the same convex quadrilateral $x_{1}x_{2}x_{3}x_{4}$ as in Lemma \ref{sqr-rep1} such that $w$ maps $0,1,\xi,\infty$ to $x_{1},x_{2},x_{3},x_{4}$, respectively. Here, we set $\xi=w^{-1}(x_{3})\in(1,\infty)$. If $\alpha_1+\alpha_2=1$ holds, then $w$ can be expressed by the following power series on $\{z\in\overline{\mathbb{H}}\mid1<\left\lvert z\right\rvert<\xi\}$:
  \begin{align*}
    w(z)&=x_{1}+\dfrac{\sin\pi\alpha_1}{\pi}\dfrac{x_{2}-x_{1}}{_{2}F_{1}(\alpha_{1},1-\alpha_{3},1;1/\xi)}\left[-e^{\pi\alpha_1i}\sum_{\substack{k,l\geq0\\k\neq l}}\dfrac{(\alpha)_k(1-\alpha_3)_l}{k!l!}\dfrac{1}{k-l}z^{-k}\left(\dfrac{z}{\xi}\right)^l\right.\\
    &\left.\hspace{1.5cm}+e^{\pi\alpha_1i}\sum_{n\geq0}\left[\psi(n+1)-\psi(\alpha_1+n)+\log(-z)\right]\dfrac{(\alpha_1)_n(1-\alpha_3)_n}{n!n!}\xi^{-n}\right].
  \end{align*}
\end{lem}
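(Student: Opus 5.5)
We reduce Lemma \ref{sqr-rep2} to Lemma \ref{sqr-rep1} by rescaling the variable, not by redoing the connection-formula computation. Let $w$ be as in the statement, with $w(0)=x_1$, $w(1)=x_2$, $w(\xi)=x_3$, $w(\infty)=x_4$ and $\xi>1$. Set $\eta\coloneqq 1/\xi\in(0,1)$ and $\tilde w(z)\coloneqq w(z/\eta)=w(\xi z)$. Then $\tilde w$ is a conformal map of $\mathbb{H}$ onto the same quadrilateral, since multiplication by the positive real $\xi$ is an automorphism of $\mathbb{H}$ fixing $0$ and $\infty$. It sends $0,\eta,1,\infty$ to $x_1,x_2,x_3,x_4$, so it is exactly the Schwarz--Christoffel map of Lemma \ref{sqr-rep1} with marked point $\eta$, namely the map $w'$ of the remark before the statement read in reverse. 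The angles are unchanged, so $\alpha_1+\alpha_2=1$ still holds and Lemma \ref{sqr-rep1} applies to $\tilde w$ with $\xi$ replaced by $\eta$. This gives a power series for $\tilde w(u)$ on $\eta<\lvert u\rvert<1$.

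We then substitute $u=z/\xi=\eta z$, so that $w(z)=\tilde w(\eta z)$. The annulus $\eta<\lvert u\rvert<1$ becomes $1<\lvert z\rvert<\xi$, which is the claimed region. In the double sum, $(u/\eta)^{-k}u^l$ becomes $z^{-k}(z/\xi)^l$. In the logarithmic sum, the coefficient $\eta^n$ is $\xi^{-n}$, and $\log(-u/\eta)=\log(-z)$. The prefactor ${}_2F_1(\alpha_1,1-\alpha_3,1;\eta)$ becomes ${}_2F_1(\alpha_1,1-\alpha_3,1;1/\xi)$. The vertex data $x_1,x_2$ are the same, because in both maps $x_2$ is the image of the middle marked point. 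Together these give the stated formula.

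The only point needing care is the branch of $\log(-z)$ and of $(-z)^{-\alpha_1}$. In Lemma \ref{sqr-rep1} the argument of $-u/\eta$ is fixed to be $0$ on the negative real axis, and on $\overline{\mathbb{H}}$ this means $\arg(-u/\eta)=\arg u-\pi\in[-\pi,0]$. Since $\eta>0$, the map $u\mapsto z=u/\eta$ preserves arguments, so $\arg(-z)=\arg(-u/\eta)$ and $\log(-z)$ is the same branch. This matches the form in the statement, where $\log(-z)$ is kept rather than expanded as $\log z-\pi i$ as at the end of the earlier proof. We expect this branch bookkeeping to be the main, though modest, obstacle. Absolute convergence on the new annulus follows from the absolute convergence already established in Lemma \ref{sqr-rep1}.
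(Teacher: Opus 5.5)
Your proposal is correct and follows essentially the paper's own route: the paper gives no separate computation for Lemma \ref{sqr-rep2}, but obtains it from Lemma \ref{sqr-rep1} through the rescaling remark just before the statement, which is what you do with the substitution $u=z/\xi$. Your explicit checks of the annulus $1<\lvert z\rvert<\xi$, the coefficients $\xi^{-n}$, and the branch $\arg(-z)=\arg z-\pi$ on $\overline{\mathbb{H}}$ fill in what that remark leaves implicit. Your direction of the rescaling, $\tilde w(z)=w(\xi z)$, is also the consistent one; the paper's remark writes it as $w(z/\xi)$, which mixes the two meanings of $\xi$.
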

We use the power series representation of $w$ on the neighborhood of $z=\xi$ in subsection 3.5.
\begin{lem}
  \label{sqr-rep3}
  On $\{z\in\overline{\mathbb{H}}\mid\left\lvert \xi-z\right\rvert<\min\{\xi,1-\xi\}\}$, we obtain
  \begin{align*}
    w(z)&=x_2+\dfrac{\Gamma(\alpha_2+\alpha_3)}{\Gamma(\alpha_2+1)\Gamma(\alpha_3)}\dfrac{x_3-x_2}{_2F_1(\alpha_3,1-\alpha_1,\alpha_2+\alpha_3;1-\xi)}\xi^{\alpha_1-1}\\
    &\hspace{3cm}\cdot\left(\dfrac{z-\xi}{1-\xi}\right)^{\alpha_2}F_1\left(\alpha_2,1-\alpha_3,1-\alpha_1,\alpha_2+1;\dfrac{z-\xi}{1-\xi},-\dfrac{z-\xi}{\xi}\right).
  \end{align*}
\end{lem}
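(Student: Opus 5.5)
Since $w$ is the Schwarz--Christoffel map sending $0,\xi,1,\infty$ to $x_1,x_2,x_3,x_4$, Theorem \ref{SCmap} gives
\[
w'(z)=B\,z^{\alpha_1-1}(z-\xi)^{\alpha_2-1}(z-1)^{\alpha_3-1}
\]
for some constant $B$, with the branch conventions of Theorem \ref{SCmap}. I will integrate this from $\xi$, so that $w(z)=x_2+\int_\xi^z w'(\zeta)\,d\zeta$. On the disk $|z-\xi|<\min\{\xi,1-\xi\}$ the factors $z^{\alpha_1-1}$ and $(z-1)^{\alpha_3-1}$ are holomorphic and nonvanishing, so only $(z-\xi)^{\alpha_2-1}$ is singular, and it is integrable at $\xi$.

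To get the $F_1$ form, substitute $\zeta=\xi+(z-\xi)t$ with $t\in[0,1]$. Then:
\begin{itemize}
\item $\zeta^{\alpha_1-1}=\xi^{\alpha_1-1}\bigl(1+\frac{z-\xi}{\xi}t\bigr)^{\alpha_1-1}$;
\item $(\zeta-1)^{\alpha_3-1}=(\xi-1)^{\alpha_3-1}\bigl(1-\frac{z-\xi}{1-\xi}t\bigr)^{\alpha_3-1}$;
\item $(\zeta-\xi)^{\alpha_2-1}d\zeta=(z-\xi)^{\alpha_2}t^{\alpha_2-1}dt$.
\end{itemize}
Hence $w(z)-x_2$ equals a constant times $(z-\xi)^{\alpha_2}$ times the integral
\[
\int_0^1 t^{\alpha_2-1}(1-xt)^{-(1-\alpha_3)}(1-yt)^{-(1-\alpha_1)}dt,\qquad x=\frac{z-\xi}{1-\xi},\; y=-\frac{z-\xi}{\xi}.
\]
By Proposition \ref{AppellF1} with $a=\alpha_2$, $c=\alpha_2+1$ (so $c-a=1$ and $0<\operatorname{Re}a<\operatorname{Re}c$ holds), this integral is $\frac{1}{\alpha_2}F_1(\alpha_2,1-\alpha_3,1-\alpha_1,\alpha_2+1;x,y)$. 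The condition $|z-\xi|<\min\{\xi,1-\xi\}$ is exactly $|x|<1$ and $|y|<1$, so the series representation is valid there. Writing $(z-\xi)^{\alpha_2}=(1-\xi)^{\alpha_2}x^{\alpha_2}$ gives the stated factor $\xi^{\alpha_1-1}\bigl(\frac{z-\xi}{1-\xi}\bigr)^{\alpha_2}$. The remaining constants, namely $B$, powers of $(1-\xi)$, and the phase $e^{\pi i(\alpha_3-1)}$ from the branch of $(\xi-1)^{\alpha_3-1}$, are collected into a single constant $C$.

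The main step is determining $C$ by evaluating at $z=1$, where $w(1)=x_3$. I would compute $x_3-x_2=\int_\xi^1 w'(\zeta)\,d\zeta$ directly, substituting $\zeta=\xi+(1-\xi)s$:
\[
x_3-x_2=C'\int_0^1 s^{\alpha_2-1}(1-s)^{\alpha_3-1}\bigl(1+\tfrac{1-\xi}{\xi}s\bigr)^{\alpha_1-1}ds .
\]
Proposition \ref{2F1} applied to this integral produces a $_2F_1$ with argument $-(1-\xi)/\xi$ and prefactor $\Gamma(\alpha_2)\Gamma(\alpha_3)/\Gamma(\alpha_2+\alpha_3)$. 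To match the argument $1-\xi$ in the statement, I would use a Pfaff transformation, or equivalently the substitution $s\mapsto 1-s$ together with the factorization $\xi+(1-\xi)s=1-(1-\xi)(1-s)$. This yields
\[
\frac{\Gamma(\alpha_2)\Gamma(\alpha_3)}{\Gamma(\alpha_2+\alpha_3)}\;{}_2F_1(\alpha_3,1-\alpha_1,\alpha_2+\alpha_3;1-\xi).
\]
Combining this with the factor $1/\alpha_2$ from the $F_1$ normalization, and using $\alpha_2\Gamma(\alpha_2)=\Gamma(\alpha_2+1)$, gives the coefficient $\frac{\Gamma(\alpha_2+\alpha_3)}{\Gamma(\alpha_2+1)\Gamma(\alpha_3)}\frac{x_3-x_2}{{}_2F_1(\cdots)}$.

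I expect the bookkeeping of the powers of $\xi$ and $1-\xi$ and of the branch phases to be the main obstacle. It has to be handled so that the phases cancel between the local integral and the side-length integral, leaving exactly $\xi^{\alpha_1-1}$. I will check this against the boundary behaviour: for real $z\in(\xi,1)$ the right-hand side must be real multiples of $x_3-x_2$, lying on the side $L_3$.
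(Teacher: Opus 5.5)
Your proposal is correct and is essentially the argument the paper relies on, since it defers to the proof of Lemma 4.3 in \cite{S25}: integrate the Schwarz--Christoffel integrand from the vertex $\xi$, identify the Euler-type integral of Proposition \ref{AppellF1} with $c=\alpha_2+1$, and fix the constant through the side integral from $\xi$ to $1$ evaluated with Proposition \ref{2F1}. One bookkeeping remark: your displayed side integral, with $\xi^{\alpha_1-1}$ already pulled into $C'$, equals $\xi^{1-\alpha_1}\frac{\Gamma(\alpha_2)\Gamma(\alpha_3)}{\Gamma(\alpha_2+\alpha_3)}\,{}_2F_1(\alpha_3,1-\alpha_1,\alpha_2+\alpha_3;1-\xi)$, so it is cleaner not to factor out $\xi$ and to use $\xi+(1-\xi)s=1-(1-\xi)(1-s)$ directly; then the phase $e^{\pi i(\alpha_3-1)}$ and the powers $(1-\xi)^{\alpha_2}$ cancel, and the local factor $\xi^{\alpha_1-1}$ survives exactly as stated.
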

We can prove this lemma in the same way as Lemma 4.3 in \cite{S25}.
\subsection{The behavior of $z_{4,\epsilon}$ and trees.}
We first study the limit value of $z_{4,\epsilon}$. In \cite{S25}, we calculated this value in the case $(p_3-p_1)(p_4-p_2)\neq0$.
\begin{lem}[\cite{S25}]
  \label{square-cm1}
  If $(p_3-p_1)(p_4-p_2)>0$ (resp. $(p_3-p_1)(p_4-p_2)<0$) holds, then we have
  \[
    \lim_{\epsilon\to+0}z_{4,\epsilon}=z_3\ (\text{resp.}\ z_1).
  \]
\end{lem}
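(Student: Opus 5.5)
The plan is to translate the position of $z_{4,\epsilon}$ into a conformal modulus, then show by Rengel's inequality that this modulus degenerates as the quadrilateral flattens.

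\textbf{Reduction to a modulus.} Since $w_\epsilon$ is a conformal map from $\mathbb{H}$ onto the interior of $x_1^\epsilon x_2^\epsilon x_3^\epsilon x_4^\epsilon$ with $w_\epsilon(0)=x_3^\epsilon$, $w_\epsilon(z_{4,\epsilon})=x_4^\epsilon$, $w_\epsilon(1)=x_1^\epsilon$, $w_\epsilon(\infty)=x_2^\epsilon$, conformal invariance gives
\[
M_\epsilon\coloneqq M(P_\epsilon;x_3^\epsilon,x_4^\epsilon,x_1^\epsilon,x_2^\epsilon)=M(\mathbb{H};0,z_{4,\epsilon},1,\infty).
\]
Consider $\mu(\xi)\coloneqq M(\mathbb{H};0,\xi,1,\infty)$ for $\xi\in(0,1)$. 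By Proposition \ref{mpocm1}, $\mu$ is strictly monotone in $\xi$. It is continuous, and by Lemma \ref{cm1/2} it satisfies $\mu(1/2)=1$. Writing it through complete elliptic integrals, or using the symmetry $\xi\mapsto1-\xi$ together with $M(Q;a,b,c,d)=1/M(Q;b,c,d,a)$, we see that $\mu(\xi)$ tends to one of $0$, $\infty$ as $\xi\to0$ and to the other as $\xi\to1$. Hence it suffices to prove $M_\epsilon\to\infty$ or $M_\epsilon\to0$, according to the sign of $(p_3-p_1)(p_4-p_2)$, and to check once which endpoint of $(0,1)$ corresponds to which limit.

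\textbf{Geometry of the flattening quadrilateral.} We have $x_i^\epsilon=(p_i,\epsilon q_i)$, where $p_i$ is independent of $\epsilon$, so $P_\epsilon$ lies in the strip $\{|y|\le C\epsilon\}$ and its area satisfies $A_\epsilon\le C'\epsilon$. The modulus $M_\epsilon$ is the extremal distance in $P_\epsilon$ between one pair of opposite sides, namely $x_4^\epsilon x_1^\epsilon\subset L_1^\epsilon$ and $x_2^\epsilon x_3^\epsilon\subset L_3^\epsilon$, whose projections to the $x$-axis are $[p_4,p_1]$ and $[p_2,p_3]$. Its reciprocal is the extremal distance between the other pair, $x_1^\epsilon x_2^\epsilon$ and $x_3^\epsilon x_4^\epsilon$, with projections $[p_1,p_2]$ and $[p_3,p_4]$.

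Going through the orderings in column $(B)$ of Table \ref{tree-critpt2} with $p_1\ne p_3$ and $p_2\ne p_4$, one checks that when $(p_3-p_1)(p_4-p_2)$ has a given sign, exactly one of these two pairs has disjoint projected intervals, separated by a gap $g>0$ independent of $\epsilon$. For example, $p_4<p_1<p_2<p_3$ separates the first pair, while $p_1<p_2<p_3<p_4$ separates the second. Any Jordan arc in $P_\epsilon$ joining the separated pair has length at least $g$. Rengel's inequality then bounds the extremal distance between that pair from below by $g^2/A_\epsilon\ge g^2/(C'\epsilon)\to\infty$. So either $M_\epsilon\to\infty$ or $M_\epsilon\to0$, and by the reduction $z_{4,\epsilon}$ tends to $0=z_3$ or to $1=z_1$. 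Matching the sign convention is a finite check: take one representative ordering from each row of the table and determine which endpoint of $(0,1)$ makes $\mu$ blow up.

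\textbf{Main obstacle.} The delicate part is the bookkeeping, not the analysis. One has to verify that in every admissible configuration of Table \ref{tree-critpt2}, the sign of $(p_3-p_1)(p_4-p_2)$ determines precisely which pair of opposite sides is horizontally separated. One also has to fix the orientation convention, so that $M_\epsilon\to\infty$ really corresponds to $z_{4,\epsilon}\to z_3$ in the first case and not the reverse. I would settle the latter by evaluating the degenerate limit $\xi\to0$ directly: there the arc $(0,\xi)$ shrinks, so the extremal distance between the boundary arcs $(\xi,1)$ and $(\infty,0)$ tends to $0$. This determines which of $M_\epsilon$, $1/M_\epsilon$ diverges.
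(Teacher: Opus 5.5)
Your argument is correct and follows essentially the route of \cite{S25} described in the introduction. Rengel's inequality, with area $O(\epsilon)$ and an $\epsilon$-independent horizontal gap between one pair of opposite sides, drives the modulus to $0$ or $\infty$, and monotonicity of $\mu(\xi)=M(\mathbb{H};0,\xi,1,\infty)$ converts this into $z_{4,\epsilon}\to z_3$ or $z_1$; the same argument also covers the case of parallel opposite sides. Three small corrections:
\begin{itemize}
\item Under the paper's definition, $M(P_\epsilon;x_3^\epsilon,x_4^\epsilon,x_1^\epsilon,x_2^\epsilon)$ is the extremal distance between $x_3^\epsilon x_4^\epsilon$ and $x_1^\epsilon x_2^\epsilon$, not between $x_4^\epsilon x_1^\epsilon$ and $x_2^\epsilon x_3^\epsilon$. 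This is harmless only because your final arc check uses the same swapped convention.
\item In the paper's convention, $M(P_\epsilon;x_3^\epsilon,x_4^\epsilon,x_1^\epsilon,x_2^\epsilon)\to\infty$ exactly when $(p_3-p_1)(p_4-p_2)>0$, which matches $\mu(\xi)\to\infty$ as $\xi\to0$.
\item The symmetry only gives $\mu(\xi)\mu(1-\xi)=1$. The endpoint behaviour of $\mu$ needs an actual estimate, e.g. the half-annulus bound $\mu(\xi)\ge\frac{1}{\pi}\log\frac{1}{\xi}$.
\end{itemize}
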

In this section, we calculate the limit value of $z_{4,\epsilon}$ in the case $(p_3-p_1)(p_4-p_2)=0$.
\begin{lem}
  \label{square-cm2}
  If $(p_3-p_1)(p_4-p_2)=0$ holds, then we have
  \[
    \lim_{\epsilon\to+0}z_{4,\epsilon}=1/2.
  \]
\end{lem}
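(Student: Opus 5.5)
By Lemma \ref{cm1/2}, it suffices to show that the conformal modulus $\mu_\epsilon\coloneqq M(P_\epsilon;x_3^\epsilon,x_4^\epsilon,x_1^\epsilon,x_2^\epsilon)$ of the quadrilateral $P_\epsilon=x_1^\epsilon x_2^\epsilon x_3^\epsilon x_4^\epsilon$ tends to $1$. The preimages $0,z_{4,\epsilon},1,\infty$ have a cross-ratio that is a continuous, strictly monotone function of $\mu_\epsilon$, and by Lemma \ref{cm1/2} the value $\mu=1$ corresponds exactly to $z_4=1/2$. By symmetry (relabelling the vertices cyclically), I treat the case $p_1=p_3$. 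In that case, Table \ref{tree-critpt2} forces the configuration $\max\{a_1,a_3\}<\min\{a_2,a_4\}$ with $p_4<p_1=p_3<p_2$ or the reverse. Geometrically, the vertices $x_1^\epsilon$ and $x_3^\epsilon$ lie on the same vertical line $\{x=p_1\}$, and the quadrilateral is a thin shape of height $O(\epsilon)$ that collapses onto the segment between $p_4$ and $p_2$.

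\textbf{Step 1 (parallelogram case).} First assume that $P_\epsilon$ is a parallelogram, i.e.\ $a_1=a_3$ and $a_2=a_4$. Then $\alpha_1+\alpha_2=1$, so the representation of Lemma \ref{sqr-rep1} (with its logarithmic terms) applies in place of Lemma \ref{F1conn1}.
\begin{itemize}
\item \emph{Existence of the limit.} Moving $\epsilon$ amounts to a vertical affine rescaling combined with a change of angle. I would compare $P_\epsilon$ and $P_{\epsilon'}$ after normalising a diagonal. By Proposition \ref{mpocm3} (moving one vertex inside the wedge cut out by the adjacent extended sides) together with Proposition \ref{mpocm2}, $\mu_\epsilon$ should be monotone in $\epsilon$ and bounded. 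Hence $\lim\mu_\epsilon$ exists in $[0,\infty]$, and so does $\lim z_{4,\epsilon}=:z_*\in[0,1]$.
\item \emph{Identifying the limit.} I would compute the ratio of adjacent side lengths $\lvert x_1^\epsilon-x_2^\epsilon\rvert/\lvert x_2^\epsilon-x_3^\epsilon\rvert$ in two ways. From the Euclidean geometry, this ratio tends to $(p_2-p_1)/(p_2-p_3)\cdot(\text{slope factor})$, which is an explicit finite positive constant. From the Schwarz--Christoffel integral, it is a quotient of Beta integrals, i.e.\ Gamma factors times ${}_2F_1(\cdot;z_4)$ and ${}_2F_1(\cdot;1-z_4)$ via Proposition \ref{2F1}. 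Using Corollary \ref{angleslim} (all $\alpha_i\to0$ or $1$, with $\epsilon\Gamma(\alpha_i)$ controlled), I would pass to the limit. Since $\alpha_1+\alpha_2=1$ exactly, no power $z_4^{\alpha}$ with a vanishing exponent degenerates. If $z_*\in\{0,1\}$, the logarithmic expansions of Proposition \ref{2F1conn1} and Lemma \ref{2F1conn2} would make the analytic side blow up like $\lvert\log z_4\rvert$ (or its reciprocal), contradicting the finite geometric limit. Thus $z_*\in(0,1)$, and the limiting equation should force $z_*=1/2$. Consistently with this, the limit configuration is symmetric, so Lemma \ref{cm1/2} gives modulus $1$.
\end{itemize}

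\textbf{Step 2 (general case).} For a general $P_\epsilon$ with $p_1=p_3$, I would construct two parallelograms $P_\epsilon^-\subset P_\epsilon\subset P_\epsilon^+$ (or use vertex moves) that share the sides through $x_1^\epsilon,x_3^\epsilon$ and satisfy the hypothesis $p_1=p_3$. By Propositions \ref{mpocm1}--\ref{mpocm3}, this gives $\mu(P_\epsilon^-)\le\mu_\epsilon\le\mu(P_\epsilon^+)$. Both bounds tend to $1$ by Step 1, so the squeeze theorem gives $\mu_\epsilon\to1$, hence $z_{4,\epsilon}\to1/2$.

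The main obstacle is Step 1: extracting the limit equation when the Gamma factors degenerate, and rigorously excluding $z_*\in\{0,1\}$ via the logarithmic connection formulas. A secondary difficulty in Step 2 is checking that the comparison parallelograms really satisfy the inclusion and wedge conditions required by Propositions \ref{mpocm2} and \ref{mpocm3}.
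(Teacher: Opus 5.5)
You have the same architecture as the paper: monotonicity via Proposition \ref{mpocm3} for the existence of the limit in the parallelogram case, a two-way computation of a ratio of adjacent sides, and a squeeze by parallelograms for the general case. However, the step that is supposed to identify the limit does not work.

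Normalize $f_\epsilon(0)=x_1^\epsilon$, $f_\epsilon(\xi_\epsilon)=x_2^\epsilon$, $f_\epsilon(1)=x_3^\epsilon$, $f_\epsilon(\infty)=x_4^\epsilon$. For a parallelogram with $p_1=p_3$, the Beta prefactors in your side ratio cancel. Up to the common Schwarz--Christoffel constant, $\lvert x_2^\epsilon-x_1^\epsilon\rvert$ and $\lvert x_3^\epsilon-x_2^\epsilon\rvert$ are both $\Gamma(\theta^\epsilon)\Gamma(1-\theta^\epsilon)$ times ${}_2F_1(\theta^\epsilon,1-\theta^\epsilon;1;\cdot)$, evaluated at $\xi_\epsilon$ and at $1-\xi_\epsilon$ respectively. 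Here $\theta^\epsilon$ (the sharp angle divided by $\pi$) is of exact order $\epsilon$. The Euclidean side gives $\cos\theta_1^\epsilon/\cos\theta_2^\epsilon$ with $\theta_i^\epsilon=\arctan(a_i\epsilon)$.

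Since ${}_2F_1(\theta,1-\theta;1;x)\to1$ as $\theta\to0$ for every fixed $x\in[0,1)$, passing to the limit yields $1=1$. Every $z_*\in(0,1)$ satisfies this, so your limiting equation forces nothing. Your exclusion of $z_*\in\{0,1\}$ fails for the same reason: ${}_2F_1(\theta,1-\theta;1;1-\xi)=1-\theta\log\xi+O(\theta)$, so the logarithm is multiplied by $\theta^\epsilon\to0$. The analytic side stays near $1$ as long as $\epsilon\log\xi_\epsilon\to0$, so a boundary limit is not ruled out. The symmetry heuristic cannot fill this in either, since $P_\epsilon$ collapses to a segment and the modulus is not continuous under that degeneration.

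The missing idea is to compare both sides at \emph{first order} in $\theta^\epsilon$. Write ${}_2F_1(\theta,1-\theta;1;x)=1+\theta S_\theta(x)$ with $S_\theta(x)=\sum_{n\geq1}\frac{(1+\theta)_{n-1}(1-\theta)_n}{n!\,n!}x^n$, which tends to $-\log(1-x)$ locally uniformly on $[0,1)$. The exact identity $\cos\theta_2^\epsilon\,{}_2F_1(\theta^\epsilon,1-\theta^\epsilon;1;\xi_\epsilon)=\cos\theta_1^\epsilon\,{}_2F_1(\theta^\epsilon,1-\theta^\epsilon;1;1-\xi_\epsilon)$ then becomes $\cos\theta_2^\epsilon S_{\theta^\epsilon}(\xi_\epsilon)-\cos\theta_1^\epsilon S_{\theta^\epsilon}(1-\xi_\epsilon)=(\cos\theta_1^\epsilon-\cos\theta_2^\epsilon)/\theta^\epsilon=O(\epsilon)$. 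Letting $\epsilon\to+0$ gives $\log(1-\xi_0)=\log\xi_0$, i.e.\ $\xi_0=1/2$. The paper carries out this computation through Proposition \ref{2F1conn1} with $m=0$, solving for $\log(1-\xi_\epsilon)$.

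This uses that the Euclidean ratio is $1+o(\epsilon)$, not merely that it has a finite limit. A ratio $1+c\epsilon+o(\epsilon)$ with $c\neq0$ would make $\log((1-\xi_0)/\xi_0)$ a nonzero multiple of $c$. You also need $\xi_0\in(0,1)$ to pass to the limit, which your proposal does not supply. The paper gets $\xi_0\in[1/2,1)$ from the monotonicity together with an inscribed rhombus of modulus $1$. Alternatively, the first-order relation itself excludes the endpoints: $S_{\theta^\epsilon}(1-\xi_\epsilon)\to\infty$ if $\xi_\epsilon\to0$ while $S_{\theta^\epsilon}(\xi_\epsilon)$ stays bounded, and symmetrically for $\xi_\epsilon\to1$.

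A smaller point on Step 2: Proposition \ref{mpocm2} requires two common sides, so your nested-inclusion comparison does not directly apply. The comparison should go through Proposition \ref{mpocm3}, keeping $x_1^\epsilon,x_3^\epsilon$ fixed and moving $x_2^\epsilon,x_4^\epsilon$.
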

We first prove the following lemma to prove Lemma \ref{square-cm2}.
\begin{lem}
  \label{parallel}
  Let $L_{1}^{\epsilon},L_{2}^{\epsilon},L_{3}^{\epsilon},L_{4}^{\epsilon}$ be straight lines in $\mathbb{R}^2$ which satisfy $L_i^\epsilon\cap L_{i+1}^\epsilon\neq\emptyset$ and $L_i^\epsilon\neq L_{i+1}^\epsilon$ for $i=1,2,3,4$. We set $L_i^\epsilon=\{(x,y)\in\mathbb{R}\mid y=\epsilon(a_i x+b_i)\}\ (a_i,b_i\in\mathbb{R})$ for $i=1,2,3,4$. Let $x_i^\epsilon=(p_i,q_j^\epsilon)$ be the intersection point of $L_i^\epsilon$ and $L_{i+1}^\epsilon$. We assume that $L_1^\epsilon,\dots,L_4^\epsilon$ forms a parallelogram $(Q_{\epsilon};x_{1}^{\epsilon},x_{2}^{\epsilon},x_{3}^{\epsilon},x_{4}^{\epsilon})$. If $(p_3-p_1)(p_4-p_2)=0$ holds, then we have 
  \[
    \lim_{\epsilon\to+0}M(Q_{\epsilon};x_{1}^{\epsilon},x_{2}^{\epsilon},x_{3}^{\epsilon},x_{4}^{\epsilon})=1.
  \]
\end{lem}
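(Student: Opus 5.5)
The plan is to combine a monotonicity argument, which gives the existence of the limit, with a computation of the ratio of adjacent sides in two ways, which identifies the limit. First I would fix the geometry. Since $Q_\epsilon$ is a parallelogram, $a_1=a_3$ and $a_2=a_4$, so by Lemma \ref{angles} we have $\alpha_1^\epsilon=\alpha_3^\epsilon$, $\alpha_2^\epsilon=\alpha_4^\epsilon$ and $\alpha_1^\epsilon+\alpha_2^\epsilon=1$. The diagonals bisect each other, so $p_1+p_3=p_2+p_4$. Hence the condition $(p_3-p_1)(p_4-p_2)=0$ forces one diagonal to be vertical; by the relabelling symmetry we may assume $p_1=p_3$. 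Consequently the two sides $x_1^\epsilon x_2^\epsilon\subset L_2^\epsilon$ and $x_2^\epsilon x_3^\epsilon\subset L_3^\epsilon$ have the same horizontal extent $|p_2-p_1|$. The Euclidean ratio of adjacent sides is then
\[
\frac{|x_1^\epsilon-x_2^\epsilon|}{|x_2^\epsilon-x_3^\epsilon|}=\sqrt{\frac{1+\epsilon^2a_2^2}{1+\epsilon^2a_3^2}}\longrightarrow 1\qquad(\epsilon\to+0).
\]

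Second, I would show that $\mu_\epsilon\coloneqq M(Q_\epsilon;x_1^\epsilon,x_2^\epsilon,x_3^\epsilon,x_4^\epsilon)$ has a limit in $[0,\infty]$. The $p$-coordinates are fixed and the $y$-coordinates scale linearly in $\epsilon$. So for $\epsilon<\epsilon'$ I would pass from $Q_{\epsilon}$ to $Q_{\epsilon'}$ by moving one vertex at a time inside the convex region described in Proposition \ref{mpocm3}, and where necessary apply Propositions \ref{mpocm1} and \ref{mpocm2} to enlarge or shrink the domain while keeping a pair of opposite sides fixed. The aim is to obtain that $\epsilon\mapsto\mu_\epsilon$ is monotone, or at least eventually monotone after splitting into the cases given by the signs of $a_2-a_1$ and of $p_2-p_1$. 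Monotonicity then gives the existence of $\mu_0=\lim\mu_\epsilon$. By Lemma \ref{cm1/2} and the continuity and monotonicity of the correspondence between the modulus and the fourth marked point, this is equivalent to the existence of $\xi_0=\lim\xi_\epsilon\in[0,1]$.

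Third, I would compute the same ratio from the Schwarz--Christoffel representation (Theorem \ref{SCmap}). Each side length is an integral of the form $\int t^{\alpha-1}(s-t)^{\beta-1}(1-t)^{\gamma-1}dt$, which Proposition \ref{2F1} turns into a Beta factor times ${}_2F_1$ evaluated at $\xi$ or $1-\xi$. As $\epsilon\to+0$, two of the angles tend to $0$ and two to $1$. The Beta factors behave like $1/\alpha^\epsilon$, with $\epsilon\Gamma(\alpha_i^\epsilon)\to\pi/|a_{i+1}-a_i|$ by Corollary \ref{angleslim}, and since $|a_2-a_1|=|a_3-a_2|$ these leading singular constants coincide for the two adjacent sides. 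Equating the two expressions for the ratio and letting $\epsilon\to+0$ should then yield a limiting equation $G(\xi_0)=1$ for an explicit function $G$ of the limiting hypergeometric values. I would check that $G$ is strictly monotone on $(0,1)$ with $G(1/2)=1$, using the symmetry $z\mapsto1-z$ combined with the central symmetry of the parallelogram. This would give $\xi_0=1/2$, and hence $\mu_0=1$ by Lemma \ref{cm1/2}.

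The main obstacle is excluding the degenerate endpoints $\xi_0\in\{0,1\}$, which are exactly the cases where $\mu_0\in\{0,\infty\}$ and where Rengel's inequality is useless. Near these endpoints the connection formulas Lemma \ref{2F1conn2} and Proposition \ref{2F1conn1} apply in the resonant case $\alpha_1+\alpha_2=1$, and they produce $\log\xi$ or $\log(1-\xi)$ terms. My plan is to show that if $\xi_\epsilon\to0$ or $\xi_\epsilon\to1$, these logarithms force the Schwarz--Christoffel ratio to tend to $0$ or $\infty$, which contradicts the Euclidean limit $1$. The delicate point is controlling the error terms uniformly in $\xi$ while $\epsilon$ varies, and I would handle this using the digamma expansions for $\psi$ near $0$ and $1$.
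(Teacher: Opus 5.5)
There is a genuine gap in your third step, and the same defect undermines your plan for the endpoints.

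\emph{The identification step is vacuous at the order you work.} In a parallelogram all four sides carry exactly the same Beta factor $B(\theta^\epsilon,1-\theta^\epsilon)$, where the interior angles are $\pi\theta^\epsilon$ and $\pi(1-\theta^\epsilon)$ with $\theta^\epsilon\to0$. So the Schwarz--Christoffel ratio of two adjacent sides is exactly $F_\epsilon(\xi_\epsilon)/F_\epsilon(1-\xi_\epsilon)$ (or its reciprocal), with $F_\epsilon(x)={}_2F_1(\theta^\epsilon,1-\theta^\epsilon,1;x)$. As $\theta^\epsilon\to0$ we have $F_\epsilon\to{}_2F_1(0,1,1;\cdot)\equiv1$ on $[0,1)$, and the Euclidean ratio also tends to $1$. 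Your limiting equation therefore reads $G(\xi_0)=1$ with $G\equiv1$: it carries no information, and $G$ is not strictly monotone.

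All the information sits at order $\epsilon$. You need two facts:
\begin{itemize}
\item $F_\epsilon(x)=1-\theta^\epsilon\log(1-x)+O((\theta^\epsilon)^2)$, locally uniformly on $[0,1)$;
\item the Euclidean ratio is $1+O(\epsilon^2)$, i.e.\ it has no first-order term. You computed $\sqrt{(1+\epsilon^2a_2^2)/(1+\epsilon^2a_3^2)}$ but only used that it tends to $1$.
\end{itemize}
Subtracting the two expressions and dividing by $\theta^\epsilon$ then gives $\log(1-\xi_0)=\log\xi_0$, hence $\xi_0=1/2$. This is exactly what the paper does. It starts from $\cos\theta_2^\epsilon\,{}_2F_1(\theta^\epsilon,1-\theta^\epsilon,1;\xi_\epsilon)=\cos\theta_1^\epsilon\,{}_2F_1(\theta^\epsilon,1-\theta^\epsilon,1;1-\xi_\epsilon)$ and expands the left side at $1$ with the logarithmic formula of Proposition \ref{2F1conn1}. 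It then solves for $\log(1-\xi_\epsilon)$ and shows the $1/\theta^\epsilon$-part tends to $\log\xi_0$, using $(\cos\theta_2^\epsilon-\cos\theta_1^\epsilon\Gamma(1+\theta^\epsilon)\Gamma(1-\theta^\epsilon))/\theta^\epsilon\to0$.

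\emph{Your endpoint argument fails for the same reason.} Near $\xi=0$ the logarithm enters multiplied by $\sin(\pi\theta^\epsilon)/\pi=O(\epsilon)$, giving $F_\epsilon(1-\xi)\approx1-\theta^\epsilon\log\xi$. So the Schwarz--Christoffel ratio tends to $1$ whenever $\epsilon\log\xi_\epsilon\to0$; a hypothetical $\xi_\epsilon=e^{-\epsilon^{-1/2}}$ would be consistent with it. The ratio need not go to $0$ or $\infty$, and there is no zeroth-order contradiction with the Euclidean limit. Indeed, in the non-parallelogram case $\xi_\epsilon\to0$ with $\log\xi_\epsilon\sim-\pi l/\epsilon$, and all side ratios stay finite.

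The paper keeps $\xi_\epsilon$ away from the endpoints geometrically:
\begin{itemize}
\item after rescaling so that the two parallelograms share the vertical diagonal $x_1x_3$, Proposition \ref{mpocm3} gives monotonicity in $\epsilon$ (for $0\le a_1<a_2$ or $a_1<a_2\le0$);
\item an inscribed rhombus with diagonal $x_1^\epsilon x_3^\epsilon$ gives the bound $1$ on the other side;
\item the case $a_1<0<a_2$ is squeezed between parallelograms with a horizontal side.
\end{itemize}
Your second step names these tools but not these constructions.

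Alternatively, once you work at first order you can drop monotonicity in $\epsilon$ altogether. The map $\xi\mapsto F_\epsilon(\xi)/F_\epsilon(1-\xi)$ is strictly increasing (positive Taylor coefficients) and equals $1$ at $\xi=1/2$. At $\xi=1/2\pm\eta$ it equals $1\pm\theta^\epsilon\log\frac{1+2\eta}{1-2\eta}+O((\theta^\epsilon)^2)$, which for small $\epsilon$ lies outside the $1+O(\epsilon^2)$ window of the Euclidean ratio. This traps $\xi_\epsilon$ in $(1/2-\eta,1/2+\eta)$.
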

\begin{proof}
  For any $\epsilon>0$, there exists $\xi_\epsilon\in(0,1)$ uniquely such that $f_\epsilon(\xi_\epsilon)=x_2^\epsilon$, when $f_\epsilon$ is the Schwarz-Christoffel map from the upper half plane to the parallelogram $Q_\epsilon$ such that
  \[
    f_\epsilon(0)=x_1^\epsilon,\ f_\epsilon(1)=x_3^\epsilon,\ f_\epsilon(\infty)=x_4^\epsilon.
  \]
  We have $M(Q;x_1,x_2,x_3,x_4)=1$ if and only if the quadrilateral $(Q;x_1,x_2,x_3,x_4)$ is conformally equivalent to the square. Therefore, it is enough to prove $\xi_\epsilon\to1/2$ as $\epsilon\to+0$. Since we have
  \[
    M(Q_{\epsilon};x_{1}^{\epsilon},x_{2}^{\epsilon},x_{3}^{\epsilon},x_{4}^{\epsilon})=1/M(Q_{\epsilon};x_{2}^{\epsilon},x_{3}^{\epsilon},x_{4}^{\epsilon},x_{1}^{\epsilon}),
  \]
  without loss of generality, we set $L_1^\epsilon,\dots,L_4^\epsilon$ as
  \begin{align*}
    &L_{1}^{\epsilon}=\{(x,\epsilon a_{1}x):x\in\mathbb{R}\},\ L_{2}^{\epsilon}=\{(x,\epsilon a_{2}x):x\in\mathbb{R}\},\\
    &L_{3}^{\epsilon}=\{(x,\epsilon (a_{1}x+b)):x\in\mathbb{R}\},\ L_{4}^{\epsilon}=\{(x,\epsilon (a_{2}x+b)):x\in\mathbb{R}\},
  \end{align*} 
  and assume $p_4<0<p_2$ and $b>0$. We now consider the case $0\leq a_1<a_2$. We first show that the limit of $\xi_\epsilon$ exists. We here set $\epsilon_{1}>\epsilon_{2}>0$, and we compare $M(Q_{\epsilon_{1}};x_{1}^{\epsilon_{1}},x_{2}^{\epsilon_{1}},x_{3}^{\epsilon_{1}},x_{4}^{\epsilon_{1}})$ and $M(Q_{\epsilon_{2}};x_{1}^{\epsilon_{2}},x_{2}^{\epsilon_{2}},x_{3}^{\epsilon_{2}},x_{4}^{\epsilon_{2}})$. We take the parallelogram $(Q_{\epsilon_{2}}';x_{1}^{\epsilon_{2}},y_{2}^{\epsilon_{2}},x_{3}^{\epsilon_{2}},y_{4}^{\epsilon_{2}})$ which is similar to the parallelogram $(Q_{\epsilon_{2}};x_{1}^{\epsilon_{2}},x_{2}^{\epsilon_{2}},x_{3}^{\epsilon_{2}},x_{4}^{\epsilon_{2}})$.
  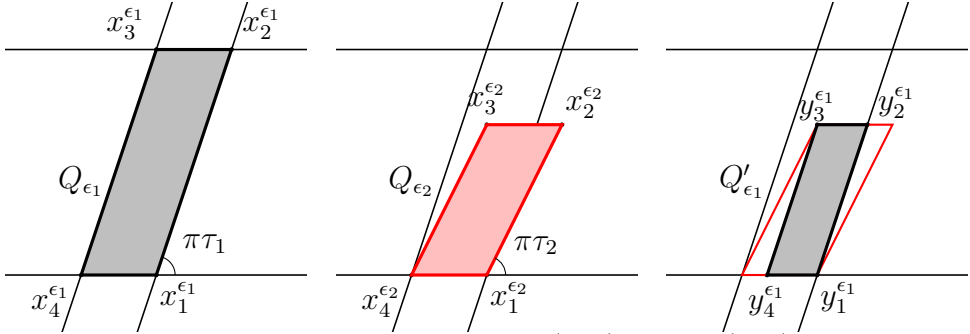
\begin{figure}[ht]
    \center
    \begin{tikzpicture}
      \begin{scope}[scale=0.5]
        \clip (-4,-1.5) rectangle (4,7.25);
        \path[name path=l1, draw, semithick] plot ({\x}, {0});
        \path[name path=l2, draw, semithick] plot ({\x}, {3*\x});
        \path[name path=l3, draw, semithick] plot ({\x}, {6});
        \path[name path=l4, draw, semithick] plot ({\x}, {3*\x+6});
        \path[name intersections={of=l1 and l2,by=x1},draw];
        \path[name intersections={of=l2 and l3,by=x2},draw];
        \path[name intersections={of=l3 and l4,by=x3},draw];
        \path[name intersections={of=l4 and l1,by=x4},draw];
        \fill[black] (x1) circle (0.06) node[below=0.3cm,right=-0.1cm]{$x_{1}^{\epsilon_{1}}$};
        \fill[black] (x2) circle (0.06) node[above right]{$x_{2}^{\epsilon_{1}}$};
        \fill[black] (x3) circle (0.06) node[above left]{$x_{3}^{\epsilon_{1}}$};
        \fill[black] (x4) circle (0.06) node[below left]{$x_{4}^{\epsilon_{1}}$};
        \fill[lightgray](x1)--(x2)--(x3)--(x4)--cycle;
        \draw[very thick](x1)--(x2)--(x3)--(x4)--cycle;
        \draw (-2,2.5) node{$Q_{\epsilon_{1}}$};
        \draw[thin] (0.5,0) arc (0:{atan(3)}:0.5);
        \draw ({atan(3)/2}:0.5) node[above right]{$\pi\tau_{1}$};
      \end{scope}
      \begin{scope}[scale=0.5,xshift=250]
        \clip (-4,-1.5) rectangle (4,7.25);
        \path[name path=l1, draw, semithick] plot ({\x}, {0});
        \path[name path=l2, draw, semithick] plot ({\x}, {3*\x});
        \path[name path=l3, draw, semithick] plot ({\x}, {6});
        \path[name path=l4, draw, semithick] plot ({\x}, {3*\x+6});
        \path[name intersections={of=l1 and l2,by=x1},draw];
        \path[name intersections={of=l2 and l3,by=x2},draw];
        \path[name intersections={of=l3 and l4,by=x3},draw];
        \path[name intersections={of=l4 and l1,by=x4},draw];
        \fill[black] (0,0) circle (0.06) node[below=0.3cm,right=-0.1cm]{$x_{1}^{\epsilon_{2}}$};
        \fill[black] (2,4) circle (0.06) node[above=0.3cm,right=-0.1cm]{$x_{2}^{\epsilon_{2}}$};
        \fill[black] (0,4) circle (0.06) node[above]{$x_{3}^{\epsilon_{2}}$};
        \fill[black] (-2,0) circle (0.06) node[below left]{$x_{4}^{\epsilon_{2}}$};
        \fill[pink] (0,0)--(2,4)--(0,4)--(-2,0)--cycle;
        \draw[very thick,red] (0,0)--(2,4)--(0,4)--(-2,0)--cycle;
        \draw (-2,2.5) node{$Q_{\epsilon_{2}}$};
        \draw[thin] (0.5,0) arc (0:{atan(2)}:0.5);
        \draw ({atan(2)/2}:0.5) node[above right]{$\pi\tau_{2}$};
      \end{scope}
      \begin{scope}[scale=0.5,xshift=500]
        \clip (-4,-1.5) rectangle (4,7.25);
        \path[name path=l1, draw, semithick] plot ({\x}, {0});
        \path[name path=l2, draw, semithick] plot ({\x}, {3*\x});
        \path[name path=l3, draw, semithick] plot ({\x}, {6});
        \path[name path=l4, draw, semithick] plot ({\x}, {3*\x+6});
        \path[name intersections={of=l1 and l2,by=x1},draw];
        \path[name intersections={of=l2 and l3,by=x2},draw];
        \path[name intersections={of=l3 and l4,by=x3},draw];
        \path[name intersections={of=l4 and l1,by=x4},draw];
        \fill[black] (0,0) circle (0.06) node[below=0.3cm,right=-0.1cm]{$y_{1}^{\epsilon_{1}}$};
        \fill[black] ({4/3},4) circle (0.06) node[above=0.3cm,right]{$y_{2}^{\epsilon_{1}}$};
        \fill[black] (0,4) circle (0.06) node[above=-0.1cm]{$y_{3}^{\epsilon_{1}}$};
        \fill[black] ({-4/3},0) circle (0.06) node[below ]{$y_{4}^{\epsilon_{1}}$};
        \draw[thick,red] (0,0)--(2,4)--(0,4)--(-2,0)--cycle;
        \fill[lightgray] (0,0)--({4/3},4)--(0,4)--({-4/3},0)--cycle;
        \draw[very thick,black] (0,0)--({4/3},4)--(0,4)--({-4/3},0)--cycle;
        \draw (-2,2.5) node{$Q_{\epsilon_{1}}'$};
      \end{scope}
    \end{tikzpicture}
    \caption{Comparing $M(Q_{\epsilon_{1}})$ and $M(Q_{\epsilon_{2}})$.}
    \label{fig:compare-para-1}
  \end{figure}
  When we consider $T^*\mathbb{R}$ as the complex plane $\mathbb{C}$, we have
  \[
    \operatorname{Re}y_2^{\epsilon_2}>\operatorname{Re}x_2^{\epsilon_2},\operatorname{Im}y_2^{\epsilon_2}<\operatorname{Im}x_2^{\epsilon_2},\operatorname{Re}y_4^{\epsilon_2}<\operatorname{Re}x_4^{\epsilon_2},\operatorname{Im}y_4^{\epsilon_2}>\operatorname{Im}x_4^{\epsilon_2}.
  \]
  From Proposition \ref{mpocm3}, we obtain
  \[
    M(Q_{\epsilon_{1}};x_{1}^{\epsilon_{1}},x_{2}^{\epsilon_{1}},x_{3}^{\epsilon_{1}},x_{4}^{\epsilon_{1}})<M(Q_{\epsilon_{2}}';x_{1}^{\epsilon_{2}},y_{2}^{\epsilon_{2}},x_{3}^{\epsilon_{2}},y_{4}^{\epsilon_{2}})=M(Q_{\epsilon_{2}};x_{1}^{\epsilon_{2}},x_{2}^{\epsilon_{2}},x_{3}^{\epsilon_{2}},x_{4}^{\epsilon_{2}}).
  \]
  When we denote the conformal modulus of $(Q_{\epsilon};x_{4}^{\epsilon},x_{1}^{\epsilon},x_{2}^{\epsilon},x_{3}^{\epsilon})$ as $M(Q_{\epsilon})$, $M(Q_{\epsilon})$ increases monotonically as $\epsilon$ decreases. This is equivalent to that $\xi_{\epsilon}$ decreases monotonically as $\epsilon$ decreases. Also $\xi_{\epsilon}$ is bounded above, then the limit value of $\xi_\epsilon$ exists. We here set $\xi_0\coloneqq \lim_{\epsilon\to+0}\xi_\epsilon$. On the other side, for any $\epsilon>0$, there exists the rhombus $M(Q_{\epsilon}'';x_{4}^{\epsilon\prime},x_{1}^{\epsilon},x_{2}^{\epsilon\prime},x_{3}^{\epsilon})$ such that
  \[
    M(Q_{\epsilon};x_{4}^{\epsilon},x_{1}^{\epsilon},x_{2}^{\epsilon},x_{3}^{\epsilon})<M(Q_{\epsilon}'';x_{4}^{\epsilon\prime},x_{1}^{\epsilon},x_{2}^{\epsilon\prime},x_{3}^{\epsilon})=1.
  \]
  \begin{figure}[tb]
  \center
    \begin{tikzpicture}
      \begin{scope}[scale=0.5]
        \clip (-4,-1.25) rectangle (4,7.25);
        \path[name path=l1, draw, semithick] plot ({\x}, {0});
        \path[name path=l2, draw, semithick] plot ({\x}, {3*\x});
        \path[name path=l3, draw, semithick] plot ({\x}, {6});
        \path[name path=l4, draw, semithick] plot ({\x}, {3*\x+6});
        \path[name intersections={of=l1 and l2,by=x1},draw];
        \path[name intersections={of=l2 and l3,by=x2},draw];
        \path[name intersections={of=l3 and l4,by=x3},draw];
        \path[name intersections={of=l4 and l1,by=x4},draw];
        \fill[black] (x1) circle (0.06) node[below=0.3cm,right=-0.1cm]{$x_{1}^{\epsilon}$};
        \fill[black] (x2) circle (0.06) node[above right]{$x_{2}^{\epsilon}$};
        \fill[black] (x3) circle (0.06) node[above left]{$x_{3}^{\epsilon}$};
        \fill[black] (x4) circle (0.06) node[below left]{$x_{4}^{\epsilon}$};
        \fill[lightgray](x1)--(x2)--(x3)--(x4)--cycle;
        \draw[very thick](x1)--(x2)--(x3)--(x4)--cycle;
        \draw (-2,2.5) node{$Q_{\epsilon}$};
        \draw[thin] (0.5,0) arc (0:{atan(3)}:0.5);
        \draw ({atan(3)/2}:0.5) node[above right]{$\pi\tau$};
      \end{scope}
      \begin{scope}[scale=0.5,xshift=250]
        \clip (-4,-1.25) rectangle (4,7.25);
        \path[name path=l1, draw, semithick] plot ({\x}, {0});
        \path[name path=l2, draw, semithick] plot ({\x}, {3*\x});
        \path[name path=l3, draw, semithick] plot ({\x}, {6});
        \path[name path=l4, draw, semithick] plot ({\x}, {3*\x+6});
        \path[name intersections={of=l1 and l2,by=x1},draw];
        \path[name intersections={of=l2 and l3,by=x2},draw];
        \path[name intersections={of=l3 and l4,by=x3},draw];
        \path[name intersections={of=l4 and l1,by=x4},draw];
        \fill[black] (0,0) circle (0.06) node[below=0.3cm,right=-0.1cm]{$x_{1}^{\epsilon}$};
        \fill[black] (1,3) circle (0.06) node[right]{$x_{2}^{\epsilon\prime}$};
        \fill[black] (0,6) circle (0.06) node[above left]{$x_{3}^{\epsilon}$};
        \fill[black] (-1,3) circle (0.06) node[left]{$x_{4}^{\epsilon\prime}$};
        \fill[cyan] (0,0)--(1,3)--(0,6)--(-1,3)--cycle;
        \draw[very thick,blue] (0,0)--(1,3)--(0,6)--(-1,3)--cycle;
        \draw (0,3) node{$Q_{\epsilon}''$};
      \end{scope}
    \end{tikzpicture}
    \caption{How to make the rhombus $Q_\epsilon''$ in the parallelogram $Q_\epsilon$.}
  \end{figure}
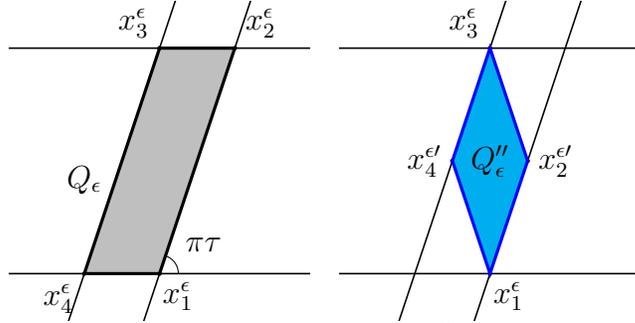
  Then we have $\lim_{\epsilon\to+0}M(Q_\epsilon)\leq1$ and $\xi_0\in[1/2,1)$. We next calculate $\xi_0$. From the integral representation of a Schwarz-Christoffel map, we have 
  \begin{align*}
    \dfrac{\left\lvert x_4^\epsilon-x_1^\epsilon\right\rvert}{\left\lvert x_2^\epsilon-x_1^\epsilon\right\rvert}=\left.\left\lvert \int_{-\infty}^{0}\zeta^{-\theta^\epsilon}(\zeta-\xi_\epsilon)^{\theta^\epsilon-1}(\zeta-1)^{-\theta^\epsilon}\,d\zeta\right\rvert \middle/\left\lvert \int_{0}^{\xi_\epsilon}\zeta^{-\theta^\epsilon}(\zeta-\xi_\epsilon)^{\theta^\epsilon-1}(\zeta-1)^{-\theta^\epsilon}\,d\zeta\right\rvert\right..
  \end{align*}
  We here set 
  \[
    \theta^\epsilon\coloneqq \theta_2^\epsilon-\theta_1^\epsilon,\theta_1^\epsilon=\arctan a_1\epsilon,\theta_2^\epsilon=\arctan a_2\epsilon.
  \]
  We also have 
  \begin{align*}
    \left\lvert \int_{-\infty}^{0}\zeta^{-\theta^\epsilon}(\zeta-\xi_\epsilon)^{\theta^\epsilon-1}(\zeta-1)^{-\theta^\epsilon}\,d\zeta\right\rvert&=\int_{0}^{1}s^{\theta^\epsilon-1}(1-s)^{-\theta^\epsilon}(1-(1-\xi_\epsilon)s)^{\theta^\epsilon-1}\,ds\\
    &=B(\theta^\epsilon,1-\theta^\epsilon)\ _2F_1(\theta^\epsilon,1-\theta^\epsilon,1;1-\xi_\epsilon)
  \end{align*}
  and 
  \begin{align*}
    \left\lvert \int_{0}^{\xi_\epsilon}\zeta^{-\theta^\epsilon}(\zeta-\xi_\epsilon)^{\theta^\epsilon-1}(\zeta-1)^{-\theta^\epsilon}\,d\zeta\right\rvert&=\int_{0}^{1}s^{-\theta^\epsilon}(1-s)^{\theta^\epsilon-1}(1-\xi_\epsilon s)^{-\theta^\epsilon}\,ds\\
    &=B(\theta^\epsilon,1-\theta^\epsilon)\ _2F_1(1-\theta^\epsilon,\theta^\epsilon,1;\xi_\epsilon).
  \end{align*}
  Since $x_4^\epsilon,x_1^\epsilon\in L_1^\epsilon$ and $x_1^\epsilon,x_2^\epsilon\in L_2^\epsilon$, one has
  \[
    \dfrac{\left\lvert x_4^\epsilon-x_1^\epsilon\right\rvert}{\left\lvert x_2^\epsilon-x_1^\epsilon\right\rvert}=\left.\dfrac{\left\lvert p_4-p_1\right\rvert}{\cos\theta_1^\epsilon} \middle/ \dfrac{\left\lvert p_1-p_2\right\rvert}{\cos\theta_2^\epsilon}\right.=\dfrac{\cos\theta_2^\epsilon}{\cos\theta_1^\epsilon}.
  \]
  Then we obtain
  \[
    \cos\theta_2^\epsilon\ _2F_1(1-\theta^\epsilon,\theta^\epsilon,1;\xi_\epsilon)-\cos\theta_1^\epsilon\ _2F_1(\theta^\epsilon,1-\theta^\epsilon,1;1-\xi_\epsilon)=0.
  \]
  By Lemma \ref{2F1conn1}, we have
  \begin{align*}
    &_2F_1(\theta^\epsilon,1-\theta^\epsilon,1;\xi_\epsilon)\\
    &=\dfrac{1}{\Gamma(\theta^\epsilon)\Gamma(1-\theta^\epsilon)}\sum_{n=0}^{\infty}\dfrac{(\theta^\epsilon)_n(1-\theta^\epsilon)_n}{n!n!}(1-\xi_\epsilon)^n\\
    &\hspace{5cm}\cdot\{2\psi(n+1)-\psi(\theta^\epsilon+n)-\psi(1-\theta^\epsilon+n)-\log(1-\xi_\epsilon)\}\\
    &=-\dfrac{\log(1-\xi_\epsilon)}{\Gamma(\theta^\epsilon)\Gamma(1-\theta^\epsilon)}\ _2F_1(\theta^\epsilon,1-\theta^\epsilon,1;1-\xi_\epsilon)+\dfrac{1}{\Gamma(\theta^\epsilon)\Gamma(1-\theta^\epsilon)}\sum_{n=0}^{\infty}\dfrac{(\theta^\epsilon)_n(1-\theta^\epsilon)_n}{n!n!}(1-\xi_\epsilon)^n\\
    &\hspace{9cm}\cdot\{2\psi(n+1)-\psi(\theta^\epsilon+n)-\psi(1-\theta^\epsilon+n)\}
  \end{align*}
  Then we obtain
  \begin{align*}
    &\dfrac{\cos\theta_2^\epsilon}{\Gamma(\theta^\epsilon)\Gamma(1-\theta^\epsilon)}\sum_{n=0}^{\infty}\dfrac{(\theta^\epsilon)_n(1-\theta^\epsilon)_n}{n!n!}(2\psi(n+1)-\psi(\theta^\epsilon+n)-\psi(1-\theta^\epsilon+n))(1-\xi_\epsilon)^n\\
    &\hspace{2cm}-\dfrac{\cos\theta_2^\epsilon\log(1-\xi_\epsilon)}{\Gamma(\theta^\epsilon)\Gamma(1-\theta^\epsilon)}\ _2F_1(\theta^\epsilon,1-\theta^\epsilon,1;1-\xi_\epsilon)-\cos\theta_1^\epsilon\ _2F_1(\theta^\epsilon,1-\theta^\epsilon,1;1-\xi_\epsilon)=0.
  \end{align*}
  This leads to 
  \begin{align*}
    \log(1-\xi_\epsilon)=&\dfrac{1}{_2F_1(\theta^\epsilon,1-\theta^\epsilon,1;1-\xi_\epsilon)}\sum_{n=0}^{\infty}\dfrac{(\theta^\epsilon)_n(1-\theta^\epsilon)_n}{n!n!}(2\psi(n+1)-\psi(1-\theta^\epsilon+n))(1-\xi_\epsilon)^n\\
    &-\dfrac{1}{_2F_1(\theta^\epsilon,1-\theta^\epsilon,1;1-\xi_\epsilon)}\sum_{n=1}^{\infty}\dfrac{(\theta^\epsilon)_n(1-\theta^\epsilon)_n}{n!n!}\psi(\theta^\epsilon+n)(1-\xi_\epsilon)^n\\
    &+\dfrac{1}{\theta^\epsilon\ _2F_1(\theta^\epsilon,1-\theta^\epsilon,1;1-\xi_\epsilon)}-\dfrac{\cos\theta_1^\epsilon}{\theta^\epsilon\cos\theta_2^\epsilon}\Gamma(\theta^\epsilon+1)\Gamma(1-\theta^\epsilon)\\
    &-\dfrac{\psi(\theta^\epsilon+1)}{_2F_1(\theta^\epsilon,1-\theta^\epsilon,1;1-\xi_\epsilon)}.
  \end{align*}
  Since we have $\theta_1^\epsilon,\theta_2^\epsilon,\theta^\epsilon\to0\ (\epsilon\to+0)$, the first three terms of the most-right hand side tends to zero as $\epsilon\to+0$. Therefore, we here calculate the limit value of
  \begin{align*}
    &\dfrac{1}{\theta^\epsilon\ _2F_1(\theta^\epsilon,1-\theta^\epsilon,1;1-\xi_\epsilon)}-\dfrac{\cos\theta_1^\epsilon}{\theta^\epsilon\cos\theta_2^\epsilon}\Gamma(\theta^\epsilon+1)\Gamma(1-\theta^\epsilon)\\
    &=\dfrac{1}{\cos\theta_2^\epsilon\ _2F_1(\theta^\epsilon,1-\theta^\epsilon,1;1-\xi_\epsilon)}\\
    &\hspace{2cm}\cdot\dfrac{\cos\theta_2^\epsilon-\cos\theta_1^\epsilon\Gamma(\theta^\epsilon+1)\Gamma(1-\theta^\epsilon)\ _2F_1(\theta^\epsilon,1-\theta^\epsilon,1;1-\xi_\epsilon)}{\theta^\epsilon}
  \end{align*}
  We first obtain
  \begin{align}
    \label{cm-para-1}
    \lim_{\epsilon\to+0}\dfrac{1}{\cos\theta_2^\epsilon\ _2F_1(\theta^\epsilon,1-\theta^\epsilon,1;1-\xi_\epsilon)}=1.
  \end{align}
  One has
  \begin{align}
    &\dfrac{\cos\theta_2^\epsilon-\cos\theta_1^\epsilon\Gamma(\theta^\epsilon+1)\Gamma(1-\theta^\epsilon)\ _2F_1(\theta^\epsilon,1-\theta^\epsilon,1;1-\xi_\epsilon)}{\theta^\epsilon}\notag\\
    &=\dfrac{\cos\theta_2^\epsilon-\cos\theta_1^\epsilon\Gamma(\theta^\epsilon+1)\Gamma(1-\theta^\epsilon)}{\theta^\epsilon}-\cos\theta_1^\epsilon\Gamma(\theta^\epsilon+1)\Gamma(1-\theta^\epsilon)\sum_{n=1}^{\infty}\dfrac{(\theta^\epsilon+1)_{n-1}(1-\theta^\epsilon)_n}{n!n!}(1-\xi_\epsilon)^n.
    \label{cm-para-2}
  \end{align}
  We obtain
  \begin{align}
    &\lim_{\epsilon\to+0}\cos\theta_1^\epsilon\Gamma(\theta^\epsilon+1)\Gamma(1-\theta^\epsilon)\sum_{n=1}^{\infty}\dfrac{(\theta^\epsilon+1)_{n-1}(1-\theta^\epsilon)_n}{n!n!}(1-\xi_\epsilon)^n\notag\\
    &\hspace{7cm}=\sum_{n=1}^{\infty}\dfrac{1}{n}(1-\xi_0)^n=-\log\xi_0.\label{cm-para-3}
  \end{align}
  One also has
  \begin{align}
    &\dfrac{\cos\theta_2^\epsilon-\cos\theta_1^\epsilon\Gamma(\theta^\epsilon+1)\Gamma(1-\theta^\epsilon)}{\theta^\epsilon}\notag\\
    &=\dfrac{\cos^2\theta_2^\epsilon-\cos^2\theta_1^\epsilon(\Gamma(\theta^\epsilon+1)\Gamma(1-\theta^\epsilon))^2}{\theta^\epsilon}\dfrac{1}{\cos\theta_2^\epsilon+\cos\theta_1^\epsilon\Gamma(\theta^\epsilon+1)\Gamma(1-\theta^\epsilon)}\notag\\
    &=\left\{\dfrac{1-\Gamma(\theta^\epsilon+1)^2\Gamma(1-\theta^\epsilon)^2}{\theta^\epsilon}-\left(\dfrac{\sin\theta_2^\epsilon}{\theta_2^\epsilon}\right)^2\left(\dfrac{\theta_2^\epsilon}{\epsilon}\right)^2\dfrac{\epsilon^2}{\theta^\epsilon}+\left(\dfrac{\sin\theta_1^\epsilon}{\theta_1^\epsilon}\right)^2\left(\dfrac{\theta_1^\epsilon}{\epsilon}\right)^2\dfrac{\epsilon^2}{\theta^\epsilon}\right\}\notag\\
    &\hspace{3cm}\cdot\dfrac{1}{\cos\theta_2^\epsilon+\cos\theta_1^\epsilon\Gamma(\theta^\epsilon+1)\Gamma(1-\theta^\epsilon)}\label{cm-para-4}
  \end{align}
  The function $g(x)=1-\Gamma(x+1)^2\Gamma(1-x)^2\ (x\in\mathbb{R})$ is smooth in the neighborhood of $x=0$ and satisfies $g(0)=0$. Then we obtain 
  \begin{align}
    &\lim_{\epsilon\to+0}\dfrac{1-\Gamma(\theta^\epsilon+1)^2\Gamma(1-\theta^\epsilon)^2}{\theta^\epsilon}=\dfrac{dg}{dx}(0)\notag\\
    &=\left.(-2\Gamma(x+1)\Gamma(1-x)^2+2\Gamma(1-x)\Gamma(x+1)^2)\right|_{x=0}=0.\label{cm-para-5}
  \end{align}
  From \eqref{cm-para-1}-\eqref{cm-para-5}, we finally obtain
  \[
    \lim_{\epsilon\to+0}\left(\dfrac{1}{\theta^\epsilon\ _2F_1(\theta^\epsilon,1-\theta^\epsilon,1;1-\xi_\epsilon)}-\dfrac{\cos\theta_1^\epsilon}{\theta^\epsilon\cos\theta_2^\epsilon}\Gamma(\theta^\epsilon+1)\Gamma(1-\theta^\epsilon)\right)=\log\xi_0.
  \]
  We therefore obtain $\log(1-\xi_0)=\log\xi_0$, and this leads to $\xi_0=1/2$. We can also consider the case $a_{1}<a_{2}\leq0$ in a similar way. When $a_{1}<0<a_{2}$ holds, we can take two parallelograms $(Q_{\epsilon}';x_{1}^{\epsilon},x_{2}^{\epsilon\prime},x_{3}^{\epsilon},x_{4}^{\epsilon\prime})$ and $(Q_{\epsilon}'';x_{1}^{\epsilon},x_{2}^{\epsilon\prime\prime},x_{3}^{\epsilon},x_{4}^{\epsilon\prime\prime})$ which edges are parallel to $x$-axis such that
  \[
    M(Q_{\epsilon}';x_{1}^{\epsilon},x_{2}^{\epsilon\prime},x_{3}^{\epsilon},x_{4}^{\epsilon\prime})<M(Q_{\epsilon};x_{1}^{\epsilon},x_{2}^{\epsilon},x_{3}^{\epsilon},x_{4}^{\epsilon})<M(Q_{\epsilon}'';x_{1}^{\epsilon},x_{2}^{\epsilon\prime\prime},x_{3}^{\epsilon},x_{4}^{\epsilon\prime\prime})
  \]
  by using Proposition \ref{mpocm3}.
  \begin{figure}[tb]
    \center
    \begin{tikzpicture}
      \begin{scope}[scale=0.5]
        \clip (-6,-1.5) rectangle (2,6);
        \path[name path=l1, draw, semithick] plot ({\x}, {\x+2});
        \path[name path=l2, draw, semithick] plot ({\x}, {(-5/3)*\x+5/3});
        \path[name path=l3, draw, semithick] plot ({\x}, {\x+7});
        \path[name path=l4, draw, semithick] plot ({\x}, {(-3/2)*\x-3});
        \draw plot ({\x}, {0});
        \path[name intersections={of=l1 and l2,by=x1},draw];
        \path[name intersections={of=l2 and l3,by=x2},draw];
        \path[name intersections={of=l3 and l4,by=x3},draw];
        \path[name intersections={of=l4 and l1,by=x4},draw];
        \fill[black] (x1) circle (0.06) node[below]{$x_{2}^{\epsilon}$};
        \fill[black] (x2) circle (0.06) node[right]{$x_{3}^{\epsilon}$};
        \fill[black] (x3) circle (0.06) node[left]{$x_{4}^{\epsilon}$};
        \fill[black] (x4) circle (0.06) node[below]{$x_{1}^{\epsilon}$};
        \fill[lightgray](x1)--(x2)--(x3)--(x4)--cycle;
        \draw[very thick](x1)--(x2)--(x3)--(x4)--cycle;
        \draw (-2,2.5) node{$Q_{\epsilon}$};
        \draw plot ({\x}, {(-5/3)*\x+5/3});
      \end{scope}
      \begin{scope}[scale=0.5,xshift=230]
        \clip (-6,-1.5) rectangle (2,6);
        \path[name path=l1, draw, semithick] plot ({\x}, {\x+2});
        \path[name path=l2, draw, semithick] plot ({\x}, {(-5/3)*\x+5/3});
        \path[name path=l3, draw, semithick] plot ({\x}, {\x+7});
        \path[name path=l4, draw, semithick] plot ({\x}, {(-3/2)*\x-3});
        \draw plot ({\x}, {0});
        \path[name intersections={of=l1 and l2,by=x1},draw];
        \path[name intersections={of=l2 and l3,by=x2},draw];
        \path[name intersections={of=l3 and l4,by=x3},draw];
        \path[name intersections={of=l4 and l1,by=x4},draw];
        \fill[black] (x1) circle (0.06);
        \fill[black] (x2) circle (0.06) node[left]{$x_{3}^{\epsilon}$};
        \fill[black] (x3) circle (0.06);
        \fill[black] (x4) circle (0.06) node[below]{$x_{1}^{\epsilon}$};
        \fill[cyan](0,5)--(x2)--(-4,0)--(x4)--cycle;
        \draw[very thick,blue](0,5)--(x2)--(-4,0)--(x4)--cycle;
        \draw (-2,2.5) node{$Q_{\epsilon}'$};
        \draw (0,5) node[right]{$x_{2}^{\epsilon\prime}$};
        \draw (-4,0) node[below]{$x_{4}^{\epsilon\prime}$};
        \draw plot ({\x}, {(-3/2)*\x-3});
        \draw plot ({\x}, {(-5/3)*\x+5/3});
      \end{scope}
      \begin{scope}[scale=0.5,xshift=460]
        \clip (-6,-1.5) rectangle (2,6);
        \path[name path=l1, draw, semithick] plot ({\x}, {\x+2});
        \path[name path=l2, draw, semithick] plot ({\x}, {(-5/3)*\x+5/3});
        \path[name path=l3, draw, semithick] plot ({\x}, {\x+7});
        \path[name path=l4, draw, semithick] plot ({\x}, {(-3/2)*\x-3});
        \draw plot ({\x}, {0});
        \path[name intersections={of=l1 and l2,by=x1},draw];
        \path[name intersections={of=l2 and l3,by=x2},draw];
        \path[name intersections={of=l3 and l4,by=x3},draw];
        \path[name intersections={of=l4 and l1,by=x4},draw];
        \fill[black] (x1) circle (0.06);
        \fill[black] (x2) circle (0.06) node[right]{$x_{3}^{\epsilon}$};
        \fill[black] (x3) circle (0.06);
        \fill[black] (x4) circle (0.06) node[below]{$x_{1}^{\epsilon}$};
        \fill[pink](0,0)--(x2)--(-4,5)--(x4)--cycle;
        \draw[very thick,red](0,0)--(x2)--(-4,5)--(x4)--cycle;
        \draw (0,0) node[below]{$x_{2}^{\epsilon\prime\prime}$};
        \draw (-4,5) node[left]{$x_{4}^{\epsilon\prime\prime}$};
        \draw (-2,2.5) node{$Q_{\epsilon}''$};
        \draw plot({\x}, {\x+7});
        \draw plot ({\x}, {\x+2});
      \end{scope}
    \end{tikzpicture}
    \caption{How to make $Q_{\epsilon}',Q_{\epsilon}''$. (Horizontal line is $x$-axis, and other lines are $L_{1}^{\epsilon},\dots,L_{4}^{\epsilon}$.)}
    \label{1case}
  \end{figure}
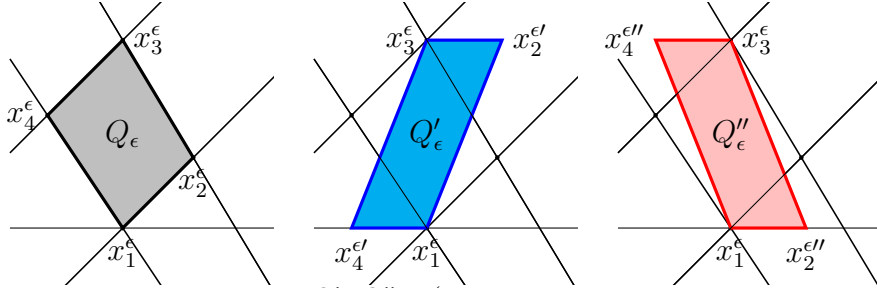
  Limit values of conformal moduli of parallelograms $Q_\epsilon'$ and $Q_\epsilon''$ are $1$, which we discussed in the case $a_{1}<a_{2}\leq0$ or $0\leq a_{1}<a_{2}$. Therefore we obtain $M(Q_{\epsilon};x_{1}^{\epsilon},x_{2}^{\epsilon},x_{3}^{\epsilon},x_{4}^{\epsilon})\rightarrow1$ as $\epsilon\to+0$.
\end{proof}

By using Lemma \ref{parallel}, we can prove Lemma \ref{square-cm2} in the case $(p_3-p_1)(p_4-p_2)=0$. We here assume $p_1=p_3$. We can take two parallelogram $(Q_{\epsilon}';x_{1}^{\epsilon},x_{2}^{\epsilon\prime},x_{3}^{\epsilon},x_{4}^{\epsilon\prime})$ and $(Q_{\epsilon}'';x_{1}^{\epsilon},x_{2}^{\epsilon\prime\prime},x_{3}^{\epsilon},x_{4}^{\epsilon\prime\prime})$ such that 
\[
  M(Q_{\epsilon}';x_{1}^{\epsilon},x_{2}^{\epsilon\prime},x_{3}^{\epsilon},x_{4}^{\epsilon\prime})<M(Q_{\epsilon};x_{1}^{\epsilon},x_{2}^{\epsilon},x_{3}^{\epsilon},x_{4}^{\epsilon})<M(Q_{\epsilon}'';x_{1}^{\epsilon},x_{2}^{\epsilon\prime\prime},x_{3}^{\epsilon},x_{4}^{\epsilon\prime\prime})
\]
By Lemma \ref{parallel}, we obtain 
\[
  \lim_{\epsilon\to+0}M(Q_{\epsilon};x_{1}^{\epsilon},x_{2}^{\epsilon},x_{3}^{\epsilon},x_{4}^{\epsilon})=1,
\]
which leads to Lemma \ref{square-cm2}. We can also discuss in a similar way in the case $p_2=p_4$.\\
We here set $z_4=1/2$, and identify the upper half plane with the unit disk. If $z_{4,\epsilon}$ tends to $z_{3}$ or $z_{1}$, then the boundary collision happens, and the flat bubble arises from the unit disk $[z_{1},z_{2},z_{3},z_{4,\epsilon}]$ with four marked points. It is known that these boundary collisions correspond to trees as Figure \ref{disktree}. On the other side, the unit disk $[z_1,z_2,z_3,z_4]$ corresponds to the tree as Figure \ref{disktree} (see \cite{Dev12}). 
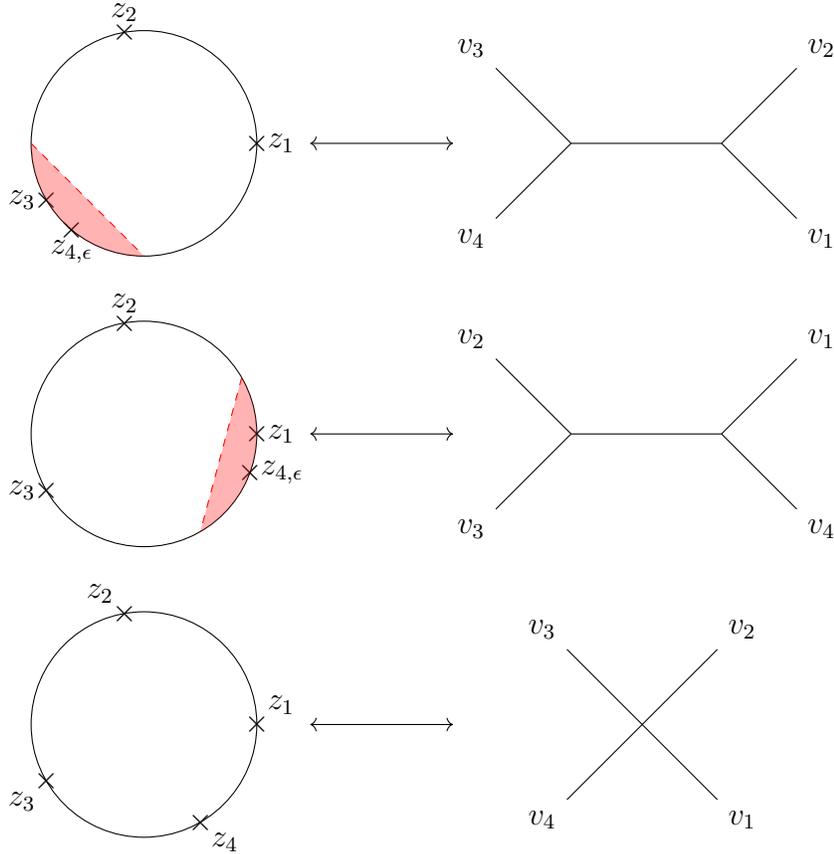
\begin{figure}[tb]
  \centering
  \begin{tikzpicture}
    \draw (0,0) circle [radius=1.5];
    \draw (0:1.5) node{$\times$};
    \draw (100:1.5) node{$\times$};
    \draw (210:1.5) node{$\times$};
    \draw (230:1.5) node{$\times$};
    \draw (0:1.5) node[right]{$z_{1}$};
    \draw (100:1.5) node[above]{$z_{2}$};
    \draw (210:1.5) node[left]{$z_{3}$};
    \draw (230:1.5) node[below]{$z_{4,\epsilon}$};
    \draw[red,dashed] (180:1.5)--(270:1.5);
    \fill[red,opacity=0.3] (180:1.5) arc (180:270:1.5)--(270:1.5)--cycle;
    \begin{scope}[xshift=60]
      \draw[<->] (0.1,0)--(2,0);
    \end{scope}
    \begin{scope}[xshift=190]
      \draw[thin] (2,1)--(1,0);
      \draw[thin] (2,-1)--(1,0);
      \draw[thin] (-1,0)--(1,0);
      \draw[thin] (-2,1)--(-1,0);
      \draw[thin] (-2,-1)--(-1,0);
      \draw (-2,1) node[above left]{$v_{3}$};
      \draw (-2,-1) node[below left]{$v_{4}$};
      \draw (2,-1) node[below right]{$v_{1}$};
      \draw (2,1) node[above right]{$v_{2}$};
    \end{scope}
    \begin{scope}[yshift=-110]
      \draw (0,0) circle [radius=1.5];
      \draw (0:1.5) node{$\times$};
      \draw (100:1.5) node{$\times$};
      \draw (210:1.5) node{$\times$};
      \draw (-20:1.5) node{$\times$};
      \draw (0:1.5) node[right]{$z_{1}$};
      \draw (100:1.5) node[above]{$z_{2}$};
      \draw (210:1.5) node[left]{$z_{3}$};
      \draw (-20:1.5) node[right]{$z_{4,\epsilon}$};
      \draw[red,dashed] (30:1.5)--(-60:1.5);
      \fill[red,opacity=0.3] (-60:1.5) arc (-60:30:1.5)--(30:1.5)--cycle;
    \end{scope}
    \begin{scope}[xshift=60,yshift=-110]
      \draw[<->] (0.1,0)--(2,0);
    \end{scope}
    \begin{scope}[xshift=190,yshift=-110]
      \draw[thin] (2,1)--(1,0);
      \draw[thin] (2,-1)--(1,0);
      \draw[thin] (-1,0)--(1,0);
      \draw[thin] (-2,1)--(-1,0);
      \draw[thin] (-2,-1)--(-1,0);
      \draw (-2,1) node[above left]{$v_{2}$};
      \draw (-2,-1) node[below left]{$v_{3}$};
      \draw (2,-1) node[below right]{$v_{4}$};
      \draw (2,1) node[above right]{$v_{1}$};
    \end{scope}
    \begin{scope}[yshift=-220]
      \draw (0,0) circle [radius=1.5];
      \draw (0:1.5) node{$\times$};
      \draw (100:1.5) node{$\times$};
      \draw (210:1.5) node{$\times$};
      \draw (-60:1.5) node{$\times$};
      \draw (0:1.5) node[above right]{$z_{1}$};
      \draw (100:1.5) node[above left]{$z_{2}$};
      \draw (210:1.5) node[below left]{$z_{3}$};
      \draw (-60:1.5) node[below right]{$z_{4}$};
    \end{scope}
    \begin{scope}[xshift=60,yshift=-220]
      \draw[<->] (0.1,0)--(2,0);
    \end{scope}
    \begin{scope}[xshift=160,yshift=-220]
      \draw[thin] (2,1)--(1,0);
      \draw[thin] (2,-1)--(1,0);
      \draw[thin] (0,1)--(1,0);
      \draw[thin] (0,-1)--(1,0);
      \draw (0,1) node[above left]{$v_{3}$};
      \draw (0,-1) node[below left]{$v_{4}$};
      \draw (2,-1) node[below right]{$v_{1}$};
      \draw (2,1) node[above right]{$v_{2}$};
    \end{scope}
  \end{tikzpicture}
  \caption{Correspondence between boundary collisions and trees (The first line and the second line are reproduced from \cite{S25}).}
  \label{disktree}
\end{figure}
By comparing Table \ref{tree-critpt2} and the results in Lemma \ref{square-cm1} and \ref{square-cm2}, we obtain the following lemma.
\begin{lem}
  \label{4thpt-tree}
  Let $f_1,f_2,f_3,f_4$ be functions such that $(f_1,f_2,f_3,f_4)$ is not generic. Then the tree which corresponds to the deformation of the upper half plane with four points $z_{1},z_{2},z_{3},z_{4,\epsilon}$ by $\epsilon\to+0$ is isomorphic to the tree which is used to construct the gradient tree $I\in\mathcal{M}_{g}(\mathbb{R};\vec{f},\vec{p})$.
\end{lem}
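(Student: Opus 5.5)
The plan is a case-by-case comparison of Table \ref{tree-critpt2} with the limits of $z_{4,\epsilon}$ given by Lemma \ref{square-cm1} and Lemma \ref{square-cm2}. The first step is to record which tree each limit configuration produces, using Figure \ref{disktree} (following \cite{Dev12}). If $z_{4,\epsilon}\to z_3=0$, the marked points $z_3,z_{4,\epsilon}$ collide and a flat bubble containing them splits off, which gives a tree with one internal edge separating $\{v_3,v_4\}$ from $\{v_1,v_2\}$; this is tree $(b)$ of Figure \ref{k4trees2}. If $z_{4,\epsilon}\to z_1=1$, then $z_{4,\epsilon}$ collides with $z_1$, which gives the internal edge separating $\{v_4,v_1\}$ from $\{v_2,v_3\}$, i.e.\ tree $(c)$. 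If $z_{4,\epsilon}\to 1/2$, there is no collision, and the limit disk $[z_1,z_2,z_3,z_4]$ corresponds to the tree with a single internal vertex, i.e.\ tree $(a)$.

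Next, for non-generic $(f_1,\dots,f_4)$ I will split into the two ways genericity fails. In the first case, $p_1=p_3$ or $p_2=p_4$, so $(p_3-p_1)(p_4-p_2)=0$. Lemma \ref{square-cm2} then gives $z_{4,\epsilon}\to1/2$, hence tree $(a)$. Reading Table \ref{tree-critpt2}, the only rows with $p_1=p_3$ or $p_2=p_4$ are exactly those whose column $(C)$ entry is $(a)$. So the two trees agree.

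In the second case $p_1\neq p_3$ and $p_2\neq p_4$, and non-genericity comes only from parallel lines ($L_i\cap L_j=\emptyset$ for some $i\neq j$). Lemma \ref{square-cm1} is stated for any $(p_3-p_1)(p_4-p_2)\neq0$ and is also proved by the method of subsection 3.3 in the parallel-side situation. It gives $z_{4,\epsilon}\to0$ when $(p_3-p_1)(p_4-p_2)>0$ and $z_{4,\epsilon}\to1$ when the product is negative. I will then check row by row that every row of Table \ref{tree-critpt2} with $(p_3-p_1)(p_4-p_2)>0$ has $(C)=(b)$, and every row with negative product has $(C)=(c)$. For example, $p_1<p_2<p_3<p_4$ gives a positive product and tree $(b)$, while $p_4<p_1<p_2<p_3$ gives $p_3-p_1>0$, $p_4-p_2<0$ and tree $(c)$. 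Running through the remaining rows in the same way completes this case.

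The main obstacle is conventions rather than analysis. I must make sure the labelling of leaves in Figure \ref{disktree} matches the labelling of Figure \ref{k4trees2}: the collision of $z_3,z_4$ has to give exactly the tree $(b)$ with $v_3,v_4$ on one side, up to the ribbon-tree isotopy. I also need Lemma \ref{square-cm1} to remain valid when some sides are parallel. If the latter is not available at this point, I will cite that its proof in \cite{S25} uses only the conformal-modulus bounds from Rengel's inequality, which do not depend on the sides being non-parallel, and defer the refinement of $\log z_{4,\epsilon}$ to subsection 3.3.
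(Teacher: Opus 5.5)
Your proposal is correct and follows the paper's own argument: match the limits of $z_{4,\epsilon}$ from Lemma \ref{square-cm1} and Lemma \ref{square-cm2} against column $(C)$ of Table \ref{tree-critpt2}, using the collision-to-tree correspondence of Figure \ref{disktree}. Your reading of the leaf labels and your row-by-row sign check are right (positive product $\Rightarrow (b)$, negative $\Rightarrow (c)$, zero $\Rightarrow (a)$). Like the paper, you rely on Lemma \ref{square-cm1} still holding when opposite sides are parallel; your Rengel-type remark justifies that, and so does Lemma \ref{length-4thpt}, which forces $z_{4,\epsilon}\to 0$.
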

We next study the length $l>0$ of the internal edge in the case $(p_3-p_1)(p_4-p_2)\neq0$. In \cite{S25}, we assumed $(f_1,f_2,f_3,f_4)$ is generic, and we proved that the length $l$ was induced by considering the behavior of $z_{4,\epsilon}$ in Lemma 4.7. In this section, we prove that Lemma 4.7 in \cite{S25} still holds even if the convex quadrilateral $x_1^\epsilon x_2^\epsilon x_3^\epsilon x_4^\epsilon$ has a pair of parallel opposite sides.
\begin{lem}
  \label{length-4thpt}
  We assume that $(f_1,f_2,f_3,f_4)$ is not generic. If $(p_{3}-p_{1})(p_{4}-p_{2})>0$ (resp. $(p_{3}-p_{1})(p_{4}-p_{2})<0$) holds, then we have
  \[
    \log z_{4,\epsilon}\sim -\dfrac{\pi l}{\epsilon}\ \left(\text{resp.}\ \log(1-z_{4,\epsilon})\sim -\dfrac{\pi l}{\epsilon}\right)\ (\epsilon\to+0).
  \]
\end{lem}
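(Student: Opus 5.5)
Consider the case $(p_3-p_1)(p_4-p_2)>0$, where $z_{4,\epsilon}\to z_3=0$ by Lemma \ref{square-cm1}; the other case follows by the symmetry $z\mapsto 1-z$ (relabelling the vertices cyclically). The strategy of \cite[Lemma 4.7]{S25} still applies. We compute the ratio of the lengths of a pair of opposite sides in two ways and compare. The first way is Euclidean: the sides lie on the lines $L_i^\epsilon$, so the lengths are $|p_i-p_j|/\cos\arctan(a_k\epsilon)$. The second way uses the Schwarz--Christoffel integral, which gives Beta functions times ${}_2F_1$ or $F_1$ values at $z_{4,\epsilon}$.

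In the generic case this produced a term $z_{4,\epsilon}^{\alpha_3^\epsilon+\alpha_4^\epsilon-1}$ (or a similar real power). Taking logarithms and using Corollary \ref{angleslim} for the exponent, which is of order $\epsilon$, gave $\log z_{4,\epsilon}\sim-\pi l/\epsilon$. Here $l$ is the length of the internal edge, determined by the $p_i$ and the slopes $a_i$.

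The non-generic situation to handle is when a pair of opposite sides is parallel, so that some $\alpha_i^\epsilon+\alpha_{i+1}^\epsilon=1$ identically in $\epsilon$. After normalisation this is $\alpha_1+\alpha_2=1$ as in Lemma \ref{sqr-rep1}. Then the exponent above vanishes and the power $z_{4,\epsilon}^{0}$ carries no information. To recover it, I would use the expansion of Lemma \ref{sqr-rep1} (equivalently Lemma \ref{sqr-rep2} after rescaling) on the annulus $\xi<|z|<1$. That expansion contains $\log(-z/\xi)$ in place of the power $(z/\xi)^{\alpha_3+\alpha_4-1}$, so $\log\xi$ enters linearly.

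Concretely, I would evaluate $w$ on the annulus, for instance at a point $z$ with $|z|$ fixed between $\xi$ and $1$, and compute a vertex difference such as $x_3-x_1$ in two ways. One way uses the expansion of $w$ near $z=1$ (Lemma 4.3 of \cite{S25}, which still holds). The other way uses Lemma \ref{sqr-rep1}. Comparing the coefficients of the $z^0$ term ("deriving the connection formula in two ways") gives a linear equation of the form
\[
\frac{\sin\pi\alpha_1}{\pi}\,\frac{|x_2-x_1|}{{}_2F_1(\alpha_1,1-\alpha_3,1;\xi)}\bigl(-\log\xi+O(1)\bigr)\approx |x_3-x_2|\ \text{(a distance between the parallel sides)}.
\]
Since $\sin\pi\alpha_1/\pi\sim \epsilon|a_2-a_1|/\pi$ by Corollary \ref{angleslim}, and ${}_2F_1(\dots;\xi)\to1$ as $\xi\to0$ with $\alpha_1,1-\alpha_3$ tending to $0$ or $1$, this gives
\[
-\log z_{4,\epsilon}\sim\frac{\pi\,\mathrm{dist}}{\epsilon\,|a_2-a_1|\,|p_2-p_1|\cdots}.
\]
Geometrically the quantity on the right should be $\pi l/\epsilon$: the gradient flow of $f_j-f_i$ along the internal edge has speed $|a_j-a_i||x-p|$, so the traversal time $l$ is a logarithmic ratio. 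Alternatively, the strip of width $\sim\epsilon l$ over the parallel sides conformally corresponds to the annulus of modulus $-\frac1\pi\log\xi$. So the last step is to match the constant with the explicit $l$ from Lemma \ref{k4gradtree} and \cite{S25}.

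The main obstacle is controlling the remaining terms uniformly as $\epsilon\to0$ while $\xi\to0$. These are the double sum over $k\neq l$ with $1/(k-l)$, and the digamma terms $\psi(n+1)-\psi(\alpha_1+n)$. The $n=0$ term is dangerous, since $\psi(\alpha_1)\sim-1/\alpha_1\sim-\pi/(\epsilon|a_2-a_1|)$ is of the same order as $\log\xi$. I would isolate it exactly, as in the proof of Lemma \ref{parallel}, where $\psi(\theta+1)$ and the $1/\theta$ term were separated. I would then show that, after multiplying by $\sin\pi\alpha_1$, it contributes only a bounded shift, or exactly the part of $l$ coming from the external offsets. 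All other terms should be $O(1)$ after this multiplication, by dominated convergence on a fixed circle $|z|=r$ with $\xi<r<1$, giving $\log z_{4,\epsilon}=-\pi l/\epsilon+O(1)$.
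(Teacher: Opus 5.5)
Your overall plan for the case $\alpha_1+\alpha_2=1$ is the paper's idea: use the logarithmic expansion of Lemma~\ref{sqr-rep1} so that $\log\xi$ enters linearly, control the digamma terms, and match with $l$. The gap is in the step that is supposed to produce the exact equation for $\log\xi$, which is not a valid argument as stated.

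The expansion of $w$ near $z=1$ from Lemma~4.3 of \cite{S25} is, up to the factor $(1-z)^{\alpha_3}$, a power series in $1-z$ that converges only on a disc about $z=1$. It is not a $\log$-Laurent series on $\xi<\lvert z\rvert<1$, so there is no ``$z^0$ coefficient'' to compare with Lemma~\ref{sqr-rep1}. Comparing the two at a single point would need an independent $o(1)$ estimate of $w$ near $z=1$, which you do not supply.

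The paper instead builds a \emph{second} expansion on the same annulus from the vertex at $\infty$: Lemma~4.3 of \cite{S25}, continued into the annulus by Lemma~\ref{2F1conn2}. The parallel-side identity $\lvert x_2-x_1\rvert\sin\pi\alpha_1=\lvert x_4-x_3\rvert\sin\pi\alpha_3$ shows that the two $\log z$ coefficients coincide. Equating the constant terms then gives an exact formula for $-\frac{\epsilon}{\pi}\log\xi$ in terms of $\lvert x_4-x_1\rvert/\lvert x_2-x_1\rvert$ and a digamma series.

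In that formula both $\psi(\alpha_1)$ and $\psi(1-\alpha_3)$ are of order $1/\epsilon$, and they do not drop out. Since $\epsilon\psi(\alpha_1)\to-\pi/(a_1-a_2)$ and $\epsilon\psi(1-\alpha_3)\to-\pi/(a_4-a_1)$, they turn the main term $\frac{p_4-p_1}{b_1-b_3}$ into $\frac{(p_4-p_1)-(p_2-p_1)-(p_4-p_3)}{b_1-b_3}=\frac{p_3-p_2}{b_1-b_3}=l$. So your displayed equation, with ``$-\log\xi+O(1)$'' and right-hand side $\lvert x_3-x_2\rvert$, holds only after such a cancellation has been established. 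Whether the singular term is ``a bounded shift'' or ``part of $l$'' is exactly what your proposal leaves open. Also, $\lvert x_3-x_2\rvert$ is the length of one of the parallel sides; the distance between them is $\lvert x_2-x_1\rvert\sin\pi\alpha_1=O(\epsilon)$, which is the prefactor on your left-hand side.

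Your displayed equation can, however, be proved directly and more simply than in the paper. Write $w'(\zeta)=C\zeta^{\alpha_1-1}(\zeta-\xi)^{-\alpha_1}(\zeta-1)^{\alpha_3-1}$. For real $\zeta\in(\xi,1)$,
$$\lvert w'(\zeta)\rvert=\lvert C\rvert\,\zeta^{-1}(1-\xi/\zeta)^{-\alpha_1}(1-\zeta)^{\alpha_3-1},$$
and Proposition~\ref{2F1} gives $\lvert x_2-x_1\rvert=\lvert C\rvert\frac{\pi}{\sin\pi\alpha_1}F$ with $F={}_2F_1(\alpha_1,1-\alpha_3,1;\xi)$. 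Expand both binomials and integrate termwise over $[\xi,1]$; all terms are nonnegative, so the interchange is legitimate. This gives the exact identity
\[
\frac{\lvert x_3-x_2\rvert}{\lvert x_2-x_1\rvert}=\frac{\sin\pi\alpha_1}{\pi F}\Bigl[-F\log\xi+\sum_{k\neq l}\frac{(\alpha_1)_k(1-\alpha_3)_l}{k!\,l!}\,\frac{\xi^k-\xi^l}{l-k}\Bigr],
\]
in which no digamma terms occur at all. The double sum is $O(\alpha_1+1-\alpha_3)$ since $\xi\to0$, and $F\to1$. Using $(a_1-a_2)(p_2-p_1)=b_1-b_3$ and Corollary~\ref{angleslim}, one gets $-\frac{\epsilon}{\pi}\log\xi\to\frac{p_3-p_2}{b_1-b_3}=l$. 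This is not the argument your proposal describes, though, so as written its central step remains a gap.
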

\begin{proof}
  We only prove in the case $(p_3-p_1)(p_4-p_2)>0$.
  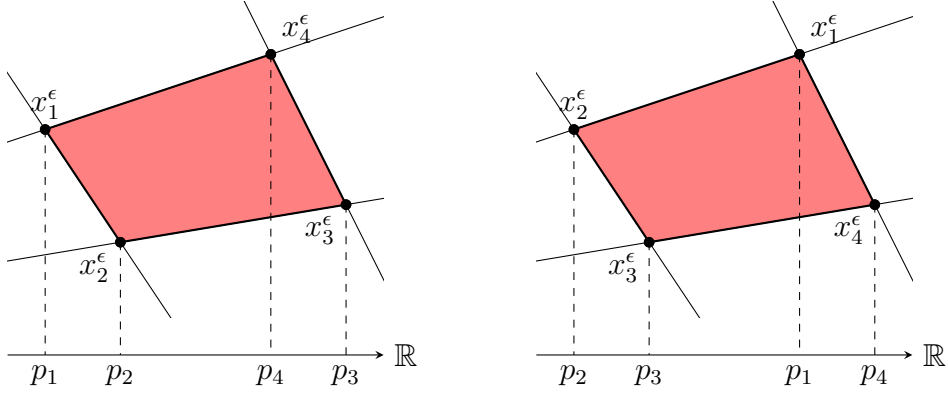
\begin{figure}[tb]
    \centering
    \begin{tikzpicture}
    \begin{scope}
      \clip (-0.5,0.5) rectangle (4.5,4.7);
      \draw [name path=L1] plot(\x,\x/3+3);
      \draw [name path=L2] plot(\x,{\x*(-3/2)+3});
      \draw [name path=L3] plot(\x,\x/6+4/3);
      \draw [name path=L4] plot(\x,-2*\x+10);
      \fill [black, name intersections={of=L1 and L2, by={x1}}] (x1) circle (2pt) node [above] at (x1) {$x_{1}^{\epsilon}$};
      \fill [black, name intersections={of=L2 and L3, by={x2}}] (x2) circle (2pt) node [below left] at (x2) {$x_{2}^{\epsilon}$};
      \fill [black, name intersections={of=L3 and L4, by={x3}}] (x3) circle (2pt) node [below left] at (x3) {$x_{3}^{\epsilon}$};
      \fill [black, name intersections={of=L4 and L1, by={x4}}] (x4) circle (2pt) node [above right] at (x4) {$x_{4}^{\epsilon}$};
      \fill[red,opacity=0.5] (x1)--(x2)--(x3)--(x4)--cycle;
      \draw[thick] (x1)--(x2)--(x3)--(x4)--cycle;
      \fill[black] (x1) circle (2pt);
      \fill[black] (x2) circle (2pt);
      \fill[black] (x3) circle (2pt);
      \fill[black] (x4) circle (2pt);
    \end{scope}
    \draw[->,>=stealth] (-0.5,0)--(4.5,0) node[right]{$\mathbb{R}$};
    \draw[dashed] (0,3)--(0,0) node[below]{$p_{1}$};
    \draw[dashed] (1,1.5)--(1,0) node[below]{$p_{2}$};
    \draw[dashed] (4,2)--(4,0) node[below]{$p_{3}$};
    \draw[dashed] (3,4)--(3,0) node[below]{$p_{4}$};
    \begin{scope}[xshift=200]
      \begin{scope}
        \clip (-0.5,0.5) rectangle (4.5,4.7);
        \draw [name path=L1] plot(\x,\x/3+3);
        \draw [name path=L2] plot(\x,{\x*(-3/2)+3});
        \draw [name path=L3] plot(\x,\x/6+4/3);
        \draw [name path=L4] plot(\x,-2*\x+10);
        \fill [black, name intersections={of=L1 and L2, by={x1}}] (x1) circle (2pt) node [above] at (x1) {$x_{2}^{\epsilon}$};
        \fill [black, name intersections={of=L2 and L3, by={x2}}] (x2) circle (2pt) node [below left] at (x2) {$x_{3}^{\epsilon}$};
        \fill [black, name intersections={of=L3 and L4, by={x3}}] (x3) circle (2pt) node [below left] at (x3) {$x_{4}^{\epsilon}$};
        \fill [black, name intersections={of=L4 and L1, by={x4}}] (x4) circle (2pt) node [above right] at (x4) {$x_{1}^{\epsilon}$};
        \fill[red,opacity=0.5] (x1)--(x2)--(x3)--(x4)--cycle;
        \draw[thick] (x1)--(x2)--(x3)--(x4)--cycle;
        \fill[black] (x1) circle (2pt);
        \fill[black] (x2) circle (2pt);
        \fill[black] (x3) circle (2pt);
        \fill[black] (x4) circle (2pt);
      \end{scope}
      \draw[->,>=stealth] (-0.5,0)--(4.5,0) node[right]{$\mathbb{R}$};
      \draw[dashed] (0,3)--(0,0) node[below]{$p_{2}$};
      \draw[dashed] (1,1.5)--(1,0) node[below]{$p_{3}$};
      \draw[dashed] (4,2)--(4,0) node[below]{$p_{4}$};
      \draw[dashed] (3,4)--(3,0) node[below]{$p_{1}$};
    \end{scope}
    \end{tikzpicture}
    \caption{Figure of quadrilaterals $x_{1}^{\epsilon}x_{2}^{\epsilon}x_{3}^{\epsilon}x_{4}^{\epsilon}$ corresponding to the case $(p_{3}-p_{1})(p_{4}-p_{2})>0$ (the left hand side) and $(p_{3}-p_{1})(p_{4}-p_{2})<0$ (the right hand side). (Reproduced from \cite{S25}.)}
    \label{const-1}
  \end{figure}
  When $\alpha_1^\epsilon+\alpha_2^\epsilon\neq1$ holds, i.e., two sides $x_1^\epsilon x_4^\epsilon$ and $x_2^\epsilon x_3^\epsilon$ of the quadrilateral $x_1^\epsilon x_2^\epsilon x_3^\epsilon x_4^\epsilon$ (the left hand side of Figure \ref{const-1}) are parallel, we can prove Lemma \ref{length-4thpt} in the same way as in \cite{S25} even if $x_1^\epsilon x_2^\epsilon$ and $x_4^\epsilon x_3^\epsilon$ are parallel. Thus we concentrate on the case $\alpha_1^\epsilon+\alpha_2^\epsilon=1$ only. By Lemma 3.1, we have
  \begin{equation}
    \label{SC-ac-1}
    \begin{aligned}
      w(z)&=x_{1}+\dfrac{\sin\pi\alpha_1}{\pi}\dfrac{x_{2}-x_{1}}{_{2}F_{1}(\alpha_{1},1,1-\alpha_{3};\xi)}\\
      &\hspace{0.5cm}\cdot\left[-e^{\pi\alpha_1i}\sum_{\substack{k,l\geq0\\k\neq l}}\dfrac{(\alpha)_k(1-\alpha_3)_l}{k!l!}\dfrac{1}{k-l}\left(\dfrac{z}{\xi}\right)^{-k}z^l\right.\\
      &\hspace{0.5cm}\left.+e^{\pi\alpha_1i}\sum_{n\geq0}\left[\psi(n+1)-\psi(\alpha_1+n)+\log z-\log\xi-\pi i\right]\dfrac{(\alpha_1)_n(1-\alpha_3)_n}{n!n!}\xi^n\right].
    \end{aligned}
  \end{equation}
  for $\xi<\left\lvert z\right\rvert<1$ and $\left\lvert \arg(-z/\xi)\right\rvert<\pi$. By combining  Lemma \ref{2F1conn2} and Lemma 4.3 (\cite{S25}), we also have 
  \begin{align*}
    w(z)&=x_{4}+e^{\pi(\alpha_3-1)i}\dfrac{\sin\pi\alpha_3}{\pi}\dfrac{x_{3}-x_{4}}{_{2}F_{1}(\alpha_{1},1-\alpha_{3},1;\xi)}\left[-\sum_{\substack{k,l\geq0\\k\neq l}}\dfrac{(1-\alpha_3)_k(\alpha_1)_l}{k!l!}\dfrac{1}{k-l}\left(\dfrac{1}{z}\right)^{-k}\left(\dfrac{\xi}{z}\right)^l\right.\\
    &\left.\hspace{1.5cm}+\sum_{n\geq0}\left[\psi(n+1)-\psi(1-\alpha_3+n)+\log\left(-\dfrac{1}{z}\right)\right]\dfrac{(\alpha_1)_n(1-\alpha_3)_n}{n!n!}\xi^n\right].
  \end{align*}
  for $\xi<\left\lvert z\right\rvert<1$ and $\left\lvert \arg(-1/z)\right\rvert<\pi$. Here, $\arg(-1/z)=0$ holds when $-\infty<1/z<\xi/z<0$, and also $\arg(1/z)=0$ holds when $0<\xi/z<1/z<\infty$. Then, we have $-1/z=1/z\cdot \exp[\pi i]$. Therefore, we obtain
  \begin{align*}
    w(z)&=x_{4}+e^{\pi(\alpha_3-1)i}\dfrac{\sin\pi\alpha_3}{\pi}\dfrac{x_{3}-x_{4}}{_{2}F_{1}(\alpha_{1},1-\alpha_{3},1;\xi)}\\
    &\hspace{0.75cm}\cdot\left[-\sum_{\substack{k,l\geq0\\k\neq l}}\dfrac{(1-\alpha_3)_k(\alpha_1)_l}{k!l!}\dfrac{1}{k-l}\left(\dfrac{1}{z}\right)^{-k}\left(\dfrac{\xi}{z}\right)^l\right.\\
    &\left.\hspace{1.5cm}+\sum_{n\geq0}\left[\psi(n+1)-\psi(1-\alpha_3+n)-\log{z}+\pi i\right]\dfrac{(\alpha_1)_n(1-\alpha_3)_n}{n!n!}\xi^n\right].
  \end{align*}
  We also have 
  \begin{align*}
    (x_2-x_1)e^{\pi\alpha_1i}\sin\pi\alpha_1&=\left\lvert x_2-x_1\right\rvert\sin\pi\alpha_1 \dfrac{x_4-x_1}{\left\lvert x_4-x_1\right\rvert}\\
    &=\left\lvert x_4-x_3\right\rvert\sin\pi\alpha_3\dfrac{x_4-x_1}{\left\lvert x_4-x_1\right\rvert}\\
    &=(x_4-x_3)e^{\pi(\alpha_3-1)i}\sin\pi\alpha_3e^{-\pi\alpha_1i}.
  \end{align*}
  Then, one obtains the following:
  \begin{equation}
    \label{SC-ac-2}
    \begin{aligned}
      w(z)&=x_{4}-e^{\pi\alpha_1i}\dfrac{\sin\pi\alpha_1}{\pi}\dfrac{x_{2}-x_{1}}{_{2}F_{1}(\alpha_{1},1-\alpha_{3},1;\xi)}\\
      &\hspace{0.75cm}\cdot\left[-\sum_{\substack{k,l\geq0\\k\neq l}}\dfrac{(1-\alpha_3)_k(\alpha_1)_l}{k!l!}\dfrac{1}{k-l}\left(\dfrac{1}{z}\right)^{-k}\left(\dfrac{\xi}{z}\right)^l\right.\\
      &\left.\hspace{1.5cm}+\sum_{n\geq0}\left[\psi(n+1)-\psi(1-\alpha_3+n)-\log{z}+\pi i\right]\dfrac{(\alpha_1)_n(1-\alpha_3)_n}{n!n!}\xi^n\right].
    \end{aligned}
  \end{equation}
  The Schwarz-Christoffel map $w$ is defined on the upper half plane i.e. $0\le\arg z\le\pi$. Also, \eqref{SC-ac-1} and \eqref{SC-ac-2} hold when $0<\arg z<\pi$. Then, we obtain
  \begin{align*}
    &x_{1}+\dfrac{\sin\pi\alpha_1}{\pi}\dfrac{x_{2}-x_{1}}{_{2}F_{1}(\alpha_{1},1,1-\alpha_{3};\xi)}e^{\pi\alpha_1i}\sum_{n\geq0}\left[\psi(n+1)-\psi(\alpha_1+n)-\log\xi\right]\dfrac{(\alpha_1)_n(1-\alpha_3)_n}{n!n!}\xi^n\\
    &=x_{4}-e^{\pi\alpha_1i}\dfrac{\sin\pi\alpha_1}{\pi}\dfrac{x_{2}-x_{1}}{_{2}F_{1}(\alpha_{1},1-\alpha_{3},1;\xi)}\sum_{n\geq0}\left[\psi(n+1)-\psi(1-\alpha_3+n)\right]\dfrac{(\alpha_1)_n(1-\alpha_3)_n}{n!n!}\xi^n.
  \end{align*}
  This leads to the following:
  \begin{align*}
    -\dfrac{\epsilon}{\pi}\log\xi=&\dfrac{x_4-x_1}{x_2-x_1}e^{-\pi\alpha_1i}\dfrac{\epsilon}{\sin\pi\alpha_1}\\
    &-\dfrac{\epsilon}{\pi}\sum_{n\geq0}\dfrac{2\psi(n+1)-\psi(\alpha_1+n)-\psi(1-\alpha_3+n)}{_2F_1(\alpha_1,1-\alpha_3,1;\xi)}\dfrac{(\alpha_1)_n(1-\alpha_3)_n}{n!n!}\xi^n.
  \end{align*}
  We here assume $\alpha_1\to0,\alpha_3\to1$. Only two cases 
  \begin{align*}
    p_1<p_2<p_3<p_4,\ a_2<a_1=a_3<a_4\\
    p_1>p_2>p_3>p_4,\ a_2<a_1=a_3<a_4
  \end{align*}
  satisfy these conditions. By Lemma \ref{angleslim}, we have
  \[
    \lim_{\epsilon\to+0}\dfrac{\epsilon}{\sin\pi\alpha_1}=\lim_{\epsilon\to+0}\dfrac{\pi\alpha_1}{\sin\pi\alpha_1}\dfrac{\epsilon}{\pi\alpha_1}=\dfrac{1}{a_1-a_2}.
  \]
  By the property of $\psi$ function
  \[
    \lim_{z\to0}z\psi(z)=\lim_{z\to0}(z\psi(z+1)-1)=-1,
  \]
  we also have
  \[
    \lim_{\epsilon\to+0}\epsilon\psi(\alpha_1)=\lim_{\epsilon\to+0}\left(\dfrac{\epsilon}{\alpha_1}\cdot\alpha_1\psi(\alpha_1)\right)=-\dfrac{\pi}{a_1-a_2},\ \lim_{\epsilon\to+0}\epsilon\psi(1-\alpha_3)=-\dfrac{\pi}{ a_4-a_1}.
  \]
  Therefore, we obtain
  \begin{align*}
    &\lim_{\epsilon\to+0}-\dfrac{\epsilon}{\pi}\log\xi=\dfrac{p_4-p_1}{p_2-p_1}\dfrac{1}{a_1-a_2}-\dfrac{1}{a_1-a_2}-\dfrac{1}{a_4-a_1}\\
    &=\left.(p_4-p_1)\middle/\left\{\left(-\dfrac{b_3-b_2}{a_1-a_2}+\dfrac{b_2-b_1}{a_2-a_1}\right)\cdot(a_1-a_2)\right\}\right.-\dfrac{1}{a_1-a_2}-\dfrac{1}{a_4-a_1}\\
    &=-\dfrac{p_4-p_1}{b_3-b_1}-\dfrac{1}{a_1-a_2}-\dfrac{1}{a_4-a_1}
  \end{align*}
  On the other side, we have 
  \[
    I_{int}(t)=-(b_3-b_1)t+p_2,
  \]
  where $I_{int}$ is the restriction of the gradient tree $I$ to the internal edge. There uniquely exists the positive real number $l>0$ such that $I_{int}(l)=p_3$. Then, we obtain
  \begin{align*}
    l&=-\dfrac{p_3-p_2}{b_3-b_1}=-\left(-\dfrac{b_4-b_3}{a_4-a_1}+\dfrac{b_3-b_2}{a_1-a_2}\right)\cdot\dfrac{1}{b_3-b_1}\\
    &=-\left(p_4-\dfrac{b_1-b_3}{a_4-a_1}-p_1+\dfrac{b_3-b_1}{a_1-a_2}\right)\cdot\dfrac{1}{b_3-b_1}\\
    &=-\dfrac{p_4-p_1}{b_3-b_1}-\dfrac{1}{a_4-a_1}-\dfrac{1}{a_1-a_2}=\lim_{\epsilon\to+0}-\dfrac{\epsilon}{\pi}\log \xi.
  \end{align*}
  The proof of other cases is analogous to the above.
\end{proof}
\subsection{Proof of Theorem \ref{k4corr1}.}
We here prove Theorem \ref{k4corr1} in the case $a_{2}<a_{1}=a_{3}<a_{4},\,p_{1}<p_{2}<p_{3}<p_{4}$ only since other cases can be proved in a similar way. We can proof \eqref{eq:k4ext-1} in the same way in \cite{S25}. We first prove \eqref{eq:k4p0-1-1}. We have 
  \begin{align*}
    w_{\epsilon}(z)=x_{3}^{\epsilon}+e^{\pi(2-\alpha_{4}^{\epsilon}-\alpha_{1}^{\epsilon})}\dfrac{x_{4}-x_{3}}{_{2}F_{1}(\alpha_{3}^{\epsilon},1-\alpha_{1}^{\epsilon},1;z_{4,\epsilon})}\dfrac{1}{\Gamma(\alpha_{3}^{\epsilon})\Gamma(\alpha_{4}^{\epsilon})}\\
    \cdot\int_{0}^{z}\zeta^{\alpha_{3}^{\epsilon}-1}(\zeta-z_{4,\epsilon})^{\alpha_{4}^{\epsilon}-1}(\zeta-1)^{\alpha_{1}^{\epsilon}-1}\,d\zeta
  \end{align*}
  because of 
  \begin{align*}
    &\lim_{z\to z_{4,\epsilon}}\int_{0}^{z}\zeta^{\alpha_{3}^{\epsilon}-1}(\zeta-z_{4,\epsilon})^{\alpha_{4}^{\epsilon}-1}(\zeta-1)^{\alpha_{1}^{\epsilon}-1}\,d\zeta\\
    &\hspace{3cm}=e^{\alpha_{4}^{\epsilon}+\alpha_{1}^{\epsilon}-2}\Gamma(\alpha_{3}^{\epsilon})\Gamma(\alpha_{4}^{\epsilon})\,_{2}F_{1}(\alpha_{3}^{\epsilon},1-\alpha_{1}^{\epsilon},1;z_{4,\epsilon}).
  \end{align*}
  When $z\in\overline{\mathbb{H}}$ satisfies $z\in D(\delta)$ and $\left\lvert z\right\rvert\leq 1$, we obtain
  \begin{equation}
    \label{eq4.1-1}
    \begin{aligned}
      &\left\lvert w_{\epsilon}(z)-p_{2}\right\rvert\\
      &<\left\lvert x_{3}^{\epsilon}-p_{2}+e^{\pi(1+\alpha_3^\epsilon-\alpha_{1}^{\epsilon})}\dfrac{x_{4}-x_{3}}{_{2}F_{1}(\alpha_{3}^{\epsilon},1-\alpha_{1}^{\epsilon},1;z_{4,\epsilon})}\dfrac{1}{\Gamma(\alpha_{3}^{\epsilon})\Gamma(1-\alpha_{3}^{\epsilon})}\right.\\
      &\hspace{7cm}\left.\cdot\int_{0}^{\delta z/\left\lvert z\right\rvert }\zeta^{\alpha_{3}^{\epsilon}-1}(\zeta-z_{4,\epsilon})^{-\alpha_{3}^{\epsilon}}(\zeta-1)^{\alpha_{1}^{\epsilon}-1}\,d\zeta\right\rvert\\
      &+\left\lvert \dfrac{x_{4}-x_{3}}{_{2}F_{1}(\alpha_{3}^{\epsilon},1-\alpha_{1}^{\epsilon},1;z_{4,\epsilon})}\dfrac{1}{\Gamma(\alpha_{3}^{\epsilon})\Gamma(1-\alpha_{3}^{\epsilon})}\right.\\
      &\hspace{6cm}\left.\cdot\int_{\delta z/\left\lvert z\right\rvert }^{z}\zeta^{\alpha_{3}^{\epsilon}-1}(\zeta-z_{4,\epsilon})^{-\alpha_{3}^{\epsilon}}(\zeta-1)^{\alpha_{1}^{\epsilon}-1}\,d\zeta\right\rvert.
    \end{aligned}
  \end{equation}
  By Lemma \ref{sqr-rep1}, we have
  \begin{align*}
    &x_{3}^{\epsilon}+e^{\pi(1+\alpha_{3}^{\epsilon}-\alpha_{1}^{\epsilon})}\dfrac{x_{4}-x_{3}}{_{2}F_{1}(\alpha_{3}^{\epsilon},1-\alpha_{1}^{\epsilon},1;z_{4,\epsilon})}\dfrac{1}{\Gamma(\alpha_{3}^{\epsilon})\Gamma(1-\alpha_{3}^{\epsilon})}\\
    &\hspace{7cm}\cdot\int_{0}^{\delta z/\left\lvert z\right\rvert }\zeta^{\alpha_{3}^{\epsilon}-1}(\zeta-z_{4,\epsilon})^{-\alpha_{3}^{\epsilon}}(\zeta-1)^{\alpha_{1}^{\epsilon}-1}\,d\zeta\\
    =&x_{3}+\dfrac{\sin\pi\alpha_3}{\pi}\dfrac{x_{4}-x_{3}}{_{2}F_{1}(\alpha_{3},1-\alpha_{1},1;\xi)}\left[-e^{\pi\alpha_3i}\sum_{\substack{k,l\geq0\\k\neq l}}\dfrac{(\alpha)_k(1-\alpha_1)_l}{k!l!}\dfrac{1}{k-l}\left(\dfrac{\delta z}{\xi\left\lvert z\right\rvert}\right)^{-k}\left(\delta\dfrac{z}{\left\lvert z\right\rvert}\right)^l\right.\\
    &\left.\hspace{1.5cm}+e^{\pi\alpha_3i}\sum_{n\geq0}\left[\psi(n+1)-\psi(\alpha_3+n)+\log\left(-\dfrac{\delta z}{\xi\left\lvert z\right\rvert }\right)\right]\dfrac{(\alpha_3)_n(1-\alpha_1)_n}{n!n!}\xi^n\right].
  \end{align*}
  Hence, we obtain
  \begin{equation}
    \label{eq4.1-2}
    \begin{aligned}
      &\left\lvert x_{3}^{\epsilon}-p_{2}+e^{\pi(1+\alpha_{3}^{\epsilon}-\alpha_{1}^{\epsilon})}\dfrac{x_{4}-x_{3}}{_{2}F_{1}(\alpha_{3}^{\epsilon},1-\alpha_{1}^{\epsilon},1;z_{4,\epsilon})}\dfrac{1}{\Gamma(\alpha_{3}^{\epsilon})\Gamma(1-\alpha_{3}^{\epsilon})}\right.\\
      &\hspace{7cm}\left.\cdot\int_{0}^{\delta z/\left\lvert z\right\rvert }\zeta^{\alpha_{3}^{\epsilon}-1}(\zeta-z_{4,\epsilon})^{-\alpha_{3}^{\epsilon}}(\zeta-1)^{\alpha_{1}^{\epsilon}-1}\,d\zeta\right\rvert\\
      \leq&\left\lvert x_3^\epsilon-p_2+\dfrac{\sin\pi\alpha_3}{\pi}\dfrac{x_{4}-x_{3}}{_{2}F_{1}(\alpha_{3},1-\alpha_{1},1;\xi)}e^{\pi\alpha_3i}\right.\\
      &\left.\hspace{2cm}\cdot\sum_{n\geq0}\left[\psi(n+1)-\psi(\alpha_3+n)-\log\xi\right]\dfrac{(\alpha_3)_n(1-\alpha_1)_n}{n!n!}\xi^n\right\rvert\\
      &\hspace{1cm}+\left\lvert \dfrac{\sin\pi\alpha_3}{\pi}\dfrac{x_{4}-x_{3}}{_{2}F_{1}(\alpha_{3},1-\alpha_{1},1;\xi)}\sum_{\substack{k,l\geq0\\k\neq l}}\dfrac{(\alpha_3)_k(1-\alpha_1)_l}{k!l!}\dfrac{1}{k-l}\left(\dfrac{\delta z}{\xi\left\lvert z\right\rvert}\right)^{-k}\left(\delta\dfrac{z}{\left\lvert z\right\rvert}\right)^l\right\rvert\\
      &\hspace{2cm}+\left\lvert \dfrac{\sin\pi\alpha_3}{\pi}(x_{4}-x_{3})\log\left(-\dfrac{\delta z}{\xi\left\lvert z\right\rvert }\right)\right\rvert
    \end{aligned}
  \end{equation}
  We here denote 
  \begin{align*}
    M_{0,\epsilon,1}\coloneqq&\left\lvert x_3^\epsilon-p_2+\dfrac{\sin\pi\alpha_3}{\pi}\dfrac{x_{4}-x_{3}}{_{2}F_{1}(\alpha_{3},1-\alpha_{1},1;\xi)}e^{\pi\alpha_3i}\right.\\
    &\left.\hspace{2cm}\cdot\sum_{n\geq0}\left[\psi(n+1)-\psi(\alpha_3+n)-\log\xi\right]\dfrac{(\alpha_3)_n(1-\alpha_1)_n}{n!n!}\xi^n\right\rvert.
  \end{align*}
  One has
  \begin{equation}
    \label{eq4.1-3}
    \begin{aligned}
      &\left\lvert \sum_{\substack{k,l\geq0\\k\neq l}}\dfrac{(\alpha_3)_k(1-\alpha_1)_l}{k!l!}\dfrac{1}{k-l}\left(\dfrac{\delta z}{\xi\left\lvert z\right\rvert}\right)^{-k}\left(\delta\dfrac{z}{\left\lvert z\right\rvert}\right)^l\right\rvert\leq\sum_{\substack{k,l\geq0\\k\neq l}}\dfrac{(\alpha_3)_k(1-\alpha_1)_l}{k!l!}\left(\dfrac{\delta}{\xi}\right)^{-k}\delta^l\\
      &\hspace{4cm}=F_1\left(\alpha_3,1-\alpha_1,1,1;\dfrac{\xi}{\delta},\delta\right)-\ _2F_1(\alpha_3,1-\alpha_1,1;\xi).
    \end{aligned}
  \end{equation}
  We here put 
  \[
    M_{0,\epsilon,2}\coloneqq\left\lvert \dfrac{\sin\pi\alpha_3}{\pi}\dfrac{x_{4}-x_{3}}{_{2}F_{1}(\alpha_{3},1-\alpha_{1},1;\xi)}\left[F_1\left(\alpha_3,1-\alpha_1,1,1;\dfrac{\xi}{\delta},\delta\right)-\ _2F_1(\alpha_3,1-\alpha_1,1;\xi)\right]\right\rvert.
  \]
  We also have
  \begin{equation}
    \label{eq4.1-4}
    \begin{aligned}
      \left\lvert \dfrac{\sin\pi\alpha_3}{\pi}(x_{4}-x_{3})\log\left(-\dfrac{\delta z}{\left\lvert z\right\rvert }\right)\right\rvert&=\left\lvert \dfrac{\sin\pi\alpha_3}{\pi}(x_{4}-x_{3})\left(\log\delta+\log\left(-\dfrac{z}{\left\lvert z\right\rvert }\right)\right)\right\rvert\\
      &\leq\left\lvert \dfrac{\sin\pi\alpha_3}{\pi}(x_{4}-x_{3})\right\rvert\cdot\left\lvert \log\delta-\pi i\right\rvert.
    \end{aligned}
  \end{equation}
  We here put
  \[
    M_{0,\epsilon,3}\coloneqq\left\lvert \dfrac{\sin\pi\alpha_3}{\pi}(x_{4}-x_{3})\right\rvert\cdot\left\lvert \log\delta-\log\xi-\pi i\right\rvert.
  \]
  We also have 
  \begin{equation}
    \label{eq4.1-5}
    \begin{aligned}
      \int_{\delta z/\left\lvert z\right\rvert }^{z}\zeta^{\alpha_{3}^{\epsilon}-1}(\zeta-z_{4,\epsilon})^{-\alpha_{3}^{\epsilon}}(\zeta-1)^{\alpha_{1}^{\epsilon}-1}\,d\zeta<\,&\int_{\delta}^{\left\lvert z\right\rvert}s^{\alpha_{3}^{\epsilon}-1}\left\lvert \dfrac{z}{\left\lvert z\right\rvert}s-z_{4,\epsilon}\right\rvert^{-\alpha_{3}^{\epsilon}}\left\lvert \dfrac{z}{\left\lvert z\right\rvert}s-1\right\rvert^{\alpha_{1}^{\epsilon}-1}\,ds\\
      <\,&\delta^{\alpha_{3}^{\epsilon}-1}(\delta-z_{4,\epsilon})^{-\alpha_{3}^{\epsilon}}\delta^{\alpha_{1}^{\epsilon}-1}(\left\lvert z\right\rvert-\delta)\\
      <\,&\delta^{\alpha_{3}^{\epsilon}-1}(\delta-z_{4,\epsilon})^{-\alpha_{3}^{\epsilon}}\delta^{\alpha_{1}^{\epsilon}-1}(1-\delta).
    \end{aligned}
  \end{equation}
  By \eqref{eq4.1-1}-\eqref{eq4.1-5}, we obtain
  \begin{equation}
    \label{eq4.1-6}
    \begin{aligned}
      \left\lvert w_{\epsilon}(z)-p_{1}\right\rvert&\,\leq M_{0,\epsilon,1}+M_{0,\epsilon,2}+M_{0,\epsilon,3}\\
      &\hspace{2cm}+M_{0,\epsilon,4}\delta^{\alpha_{3}^{\epsilon}-1}(\delta-z_{4,\epsilon})^{\alpha_{4}^{\epsilon}-1}\delta^{\alpha_{1}^{\epsilon}-1}(1-\delta),
    \end{aligned}
  \end{equation}
  where
  \[
    M_{0,\epsilon,4}\coloneqq\left\lvert \dfrac{x_{4}-x_{3}}{_{2}F_{1}(\alpha_{3}^{\epsilon},1-\alpha_{1}^{\epsilon},\alpha_{3}^{\epsilon}+\alpha_{4}^{\epsilon};z_{4,\epsilon})}\dfrac{\sin\pi\alpha_3}{\pi}\right\rvert.
  \]
  By Corollary \ref{angleslim}, we have $M_{0,\epsilon,j}\to0$ as $\epsilon\to+0$ for $j=2,3,4$. One has
  \begin{align*}
    M_{0,\epsilon,1}=&\left\lvert x_3^\epsilon-p_2-\log\xi\cdot\dfrac{\sin\pi\alpha_3}{\pi}(x_{4}-x_{3})e^{\pi\alpha_3i}\right.\\
    &\left.+\dfrac{\sin\pi\alpha_3}{\pi}\dfrac{x_{4}-x_{3}}{_{2}F_{1}(\alpha_{3},1-\alpha_{1},1;\xi)}e^{\pi\alpha_3i}\sum_{n\geq0}\left[\psi(n+1)-\psi(\alpha_3+n)\right]\dfrac{(\alpha_3)_n(1-\alpha_1)_n}{n!n!}\xi^n\right\rvert.
  \end{align*}
  By Lemma \ref{anglesineq}, we obtain 
  \[
    \lim_{\epsilon\to+0}\dfrac{\sin\pi\alpha_3}{\pi}\dfrac{x_{4}-x_{3}}{_{2}F_{1}(\alpha_{3},1-\alpha_{1},1;\xi)}e^{\pi\alpha_3i}\sum_{n\geq0}\left[\psi(n+1)-\psi(\alpha_3+n)\right]\dfrac{(\alpha_3)_n(1-\alpha_1)_n}{n!n!}\xi^n=0
  \]
  and 
  \[
    \lim_{\epsilon\to+0}\dfrac{\sin\pi\alpha_3}{\epsilon}=\lim_{\epsilon\to+0}\dfrac{\sin\pi(1-\alpha_3)}{\pi(1-\alpha_3)}\dfrac{\pi(1-\alpha_3)}{\epsilon}=a_4-a_1.
  \]
  By Lemma \ref{length-4thpt}, we also have
  \[
    \lim_{\epsilon\to+0}\left(-\dfrac{\epsilon}{\pi}\log\xi\right)= \dfrac{p_3-p_2}{b_1-b_3}.
  \]
  This leads to the following:
  \[
    \lim_{\epsilon\to+0}\left(-\log\xi\cdot\dfrac{\sin\pi\alpha_3}{\pi}(x_{4}-x_{3})\right)=\dfrac{p_3-p_2}{b_1-b_3}\cdot(a_4-a_1)(p_4-p_3)=p_3-p_2.
  \]
  Therefore we obtain $M_{0,\epsilon,1}\to0\ (\epsilon\to+0)$, and we get \eqref{eq:k4p0-1-1}. On the other hand, we have 
  \begin{align*}
    w_{\epsilon}(z)&=x_{2}^{\epsilon}-\dfrac{x_{1}^{\epsilon}-x_{2}^{\epsilon}}{_{2}F_{1}(1-\alpha_{1}^{\epsilon},\alpha_{3}^{\epsilon},1;z_{4,\epsilon})}\dfrac{1}{\Gamma(\alpha_{1}^{\epsilon})\Gamma(1-\alpha_{1}^{\epsilon})}\\
    &\hspace{4cm}\cdot\int_{\infty}^{z}\zeta^{\alpha_{3}^{\epsilon}-1}(\zeta-z_{4,\epsilon})^{-\alpha_{3}^{\epsilon}}(\zeta-1)^{\alpha_{1}^{\epsilon}-1}\,d\zeta
  \end{align*}
  because of 
  \begin{align*}
    \lim_{z\to1}\int_{\infty}^{z}\zeta^{\alpha_{3}^{\epsilon}-1}(\zeta-z_{4,\epsilon})^{-\alpha_{3}^{\epsilon}}(\zeta-1)^{\alpha_{1}^{\epsilon}-1}\,d\zeta=-\Gamma(\alpha_{1}^{\epsilon})\Gamma(1-\alpha_{1}^{\epsilon})\,_{2}F_{1}(1-\alpha_{1}^{\epsilon},\alpha_{3}^{\epsilon},1;z_{4,\epsilon}).
  \end{align*}
  We next discuss the case $z\in D(\delta)$ and $\left\lvert z\right\rvert\geq1$. In this case, we can prove \eqref{eq:k4p0-1-1} as same as the proof in \cite{S25}. Then we obtain \eqref{eq:k4p0-1-1}. We can also prove \eqref{eq:k4p0-1-2} in a similar way by using Lemma \ref{sqr-rep2}. We next prove \eqref{eq:k4ext-1}, but we can prove it in the same way as in \cite{S25}. We then prove \eqref{eq:k4int-1}. We first have
  \begin{align*}
    w_{\epsilon}(z)&=x_{3}+\dfrac{\sin\pi\alpha_3}{\pi}\dfrac{x_{4}-x_{3}}{_{2}F_{1}(\alpha_{3},1-\alpha_{1},1;\xi)}\left[-e^{\pi\alpha_3i}\sum_{\substack{k,l\geq0\\k\neq l}}\dfrac{(\alpha)_k(1-\alpha_1)_l}{k!l!}\dfrac{1}{k-l}\left(\dfrac{\delta z}{\xi\left\lvert z\right\rvert}\right)^{-k}\left(\delta\dfrac{z}{\left\lvert z\right\rvert}\right)^l\right.\\
    &\left.\hspace{1.5cm}+e^{\pi\alpha_3i}\sum_{n\geq0}\left[\psi(n+1)-\psi(\alpha_3+n)+\log\left(-\dfrac{\delta z}{\xi\left\lvert z\right\rvert }\right)\right]\dfrac{(\alpha_3)_n(1-\alpha_1)_n}{n!n!}\xi^n\right].
  \end{align*}
  when $z_{4,\epsilon}/\delta<\left\lvert z\right\rvert<\delta$ and $\left\lvert \arg(-z/\xi)\right\rvert<\pi$ holds. By the definition of gradient trees, we also have $I_{int}(\tau)=p_{2}-(b_{3}-b_{1})\tau$. We thus obtain
  \begin{equation}
    \label{eq4.4-1}
    \begin{aligned}
      &\left\lvert w_{\epsilon}(z)-I_{int}(\epsilon\tau)\right\rvert\\
      &\leq\left\lvert x_3^\epsilon-p_2+\dfrac{\sin\pi\alpha_3}{\pi}\dfrac{x_{4}-x_{3}}{_{2}F_{1}(\alpha_{3},1-\alpha_{1},1;\xi)}e^{\pi\alpha_3i}\right.\\
      &\hspace{2cm}\left.\cdot\sum_{n\geq0}\left[\psi(n+1)-\psi(\alpha_3+n)-\log\xi\right]\dfrac{(\alpha_3)_n(1-\alpha_1)_n}{n!n!}\xi^n\right\rvert\\
      &\hspace{1.5cm}+\left\lvert \dfrac{\sin\pi\alpha_3}{\pi}\dfrac{x_{4}-x_{3}}{_{2}F_{1}(\alpha_{3},1-\alpha_{1},1;\xi)}\sum_{\substack{k,l\geq0\\k\neq l}}\dfrac{(\alpha_3)_k(1-\alpha_1)_l}{k!l!}\dfrac{1}{k-l}\left(\dfrac{\delta z}{\xi\left\lvert z\right\rvert}\right)^{-k}\left(\delta\dfrac{z}{\left\lvert z\right\rvert}\right)^l\right\rvert\\
      &\hspace{3cm}+\left\lvert \dfrac{\sin\pi\alpha_3}{\pi}(x_{4}-x_{3})e^{\pi\alpha_3i}\log\left(-\dfrac{\delta z}{\left\lvert z\right\rvert }\right)+(b_{3}-b_{1})\epsilon\tau\right\rvert
    \end{aligned}
  \end{equation}
  This is equivalent to
  \begin{align*}
    \left\lvert w_{\epsilon}(z)-I_{int}(\epsilon\tau)\right\rvert\leq M_{0,\epsilon,1}+M_{0,\epsilon,2}+\left\lvert \dfrac{\sin\pi\alpha_3}{\pi}(x_{4}-x_{3})e^{\pi\alpha_3i}\log\left(-\dfrac{\delta z}{\left\lvert z\right\rvert }\right)+(b_{3}-b_{1})\epsilon\tau\right\rvert.
  \end{align*}
  When we put $z=\exp[-\pi\tau+i\pi(1-\sigma)]$, we obtain
  \begin{align*}
    &\left\lvert \frac{\sin\pi\alpha_3}{\pi}(x_{4}-x_{3})e^{\pi\alpha_3i}\log\left(-\dfrac{\delta z}{\left\lvert z\right\rvert }\right)+(b_{3}-b_{1})\epsilon\tau\right\rvert \\
    =&\left\lvert \frac{\sin\pi\alpha_3}{\pi}(x_{4}-x_{3})e^{\pi\alpha_3i}\left(\log\delta-\pi\tau-i\pi\sigma \right)+(b_{3}-b_{1})\epsilon\tau\right\rvert\\
    \leq&\left\lvert \frac{\sin\pi\alpha_3}{\pi}(x_{4}-x_{3})e^{\pi\alpha_3i}\right\rvert\left\lvert \log\delta-i\pi\sigma\right\rvert+\left\lvert -\dfrac{\sin\pi\alpha_3}{\epsilon}(x_{4}-x_{3})e^{\pi\alpha_3i}+(b_{3}-b_{1})\right\rvert\left\lvert \epsilon\tau\right\rvert\\
    \leq&\left\lvert \frac{\sin\pi\alpha_3}{\pi}(x_{4}-x_{3})e^{\pi\alpha_3i}\right\rvert\left\lvert \log\delta-i\pi\right\rvert\\
    &\hspace{4cm}+\left\lvert -\frac{\sin\pi\alpha_3}{\pi}(x_{4}-x_{3})e^{\pi\alpha_3i}+(b_{3}-b_{1})\right\rvert\left\lvert -\dfrac{\epsilon}{\pi}\log\xi+\dfrac{\epsilon}{\pi}\log\delta\right\rvert.
  \end{align*}
  By Corollary \ref{angleslim}, we have
  \begin{align*}
    \lim_{\epsilon\to+0}\left(-\frac{\sin\pi\alpha_3}{\pi}(x_{4}-x_{3})e^{\pi\alpha_3i}\right)&=(a_4-a_1)(p_4-p_3)\\
    &=(a_4-a_1)\left(-\dfrac{b_1-b_4}{a_1-a_4}+\dfrac{b_4-b_3}{a_4-a_1}\right)\\
    &=b_1-b_3.
  \end{align*}
  Since $M_{0,\epsilon,1},M_{0,\epsilon,2}\to0\ (\epsilon\to+0)$ hold, we obtain \eqref{eq:k4int-1}.
\subsection{Proof of Theorem \ref{k4corr2}.}
We can prove \eqref{eq:k4ext-2} in a similar way as Theorem \ref{k4corr1} \eqref{eq:k4ext-1}. We here only prove \eqref{eq:k4p0-2}. We here assume $p_4<p_1=p_3<p_2$ and $\max\{a_1,a_3\}<\max\{a_2,a_4\}$. From Lemma \ref{square-cm2}, we can take $\delta>0$ such that the segment between $\delta\cdot z/\left\lvert z\right\rvert $ and $z$ is included in $D_\epsilon(\delta)$ for
\[
  z\in D_\epsilon(\delta)\setminus(D_0(z_{4,\epsilon})\cup D_{z_{4,\epsilon}}(\min\{z_{4,\epsilon},1-z_{4,\epsilon}\})\cup D_1(1-z_{4,\epsilon})\cup D_\infty(1))(\eqqcolon D_{\epsilon}(\delta)')
\]
and the sufficiently small $\epsilon>0$. This fact can be proved by using elementary geometry. By applying a similar discussion in the proof of Theorem \ref{k4corr1} and using Lemma \ref{sqr-rep3}, there exists positive real numbers $M_{\epsilon,1},M_{\epsilon,2},M_{\epsilon,3},M_{\epsilon,4}$ which depend on $\epsilon$ and satisfy 
\begin{equation}
  \label{eq5.1-1}
  \begin{aligned}
    M_{\epsilon,1}\geq\max_{z\in D_\epsilon(\delta)\cap D_0(\delta)}\left\lvert w_\epsilon(z)-p_1\right\rvert,\ M_{\epsilon,2}\geq\max_{z\in D_\epsilon(\delta)\cap D_{z_{4,\epsilon}}(\min\{z_{4,\epsilon},1-z_{4,\epsilon}\})}\left\lvert w_\epsilon(z)-p_1\right\rvert,\\
    M_{\epsilon,3}\geq\max_{z\in D_\epsilon(\delta)\cap D_1(1-z_{4,\epsilon})}\left\lvert w_\epsilon(z)-p_1\right\rvert,\ M_{\epsilon,4}\geq\max_{z\in D_\epsilon(\delta)\cap D_\infty(1)}\left\lvert w_\epsilon(z)-p_1\right\rvert
  \end{aligned}
\end{equation}
and $M_{1,\epsilon},M_{2,\epsilon},M_{3,\epsilon},M_{4,\epsilon}\to0$ as $\epsilon\to+0$. When 
\[
  z\in D_\epsilon(\delta)\setminus(D_0(z_{4,\epsilon})\cup D_{z_{4,\epsilon}}(\min\{z_{4,\epsilon},1-z_{4,\epsilon}\})\cup D_1(1-z_{4,\epsilon})\cup D_\infty(1))(\eqqcolon D_{\epsilon}(\delta)')
\]
holds, the segment between $z$ and $\delta\cdot z/\left\lvert z\right\rvert$ are included in $\overline{\mathbb{H}}\setminus (D_0(\delta)\cup D_{z_{4,\epsilon}}(\delta)\cup D_1(\delta)\cup D_\infty(\delta))$. Then we have
\begin{equation}
  \begin{aligned}
    \left\lvert w_\epsilon(z)-p_1\right\rvert&\leq\left\lvert x_3^\epsilon-p_1+e^{\pi(2-\alpha_{4}^{\epsilon}-\alpha_{1}^{\epsilon})}\dfrac{x_{4}-x_{3}}{_{2}F_{1}(\alpha_{3}^{\epsilon},1-\alpha_{1}^{\epsilon},1;z_{4,\epsilon})}\dfrac{\Gamma(\alpha_3^\epsilon+\alpha_4^\epsilon)}{\Gamma(\alpha_{3}^{\epsilon})\Gamma(\alpha_{4}^{\epsilon})}\right.\\
    &\hspace{4cm}\left.\cdot\int_{0}^{\delta z/\left\lvert z\right\rvert}\zeta^{\alpha_3^\epsilon-1}(\zeta-z_{4,\epsilon})^{\alpha_4^{\epsilon}-1}(\zeta-1)^{\alpha_1^{\epsilon}-1}d\zeta\right\rvert\\
    &\hspace{1cm}+\left\lvert \dfrac{x_{4}-x_{3}}{_{2}F_{1}(\alpha_{3}^{\epsilon},1-\alpha_{1}^{\epsilon},1;z_{4,\epsilon})}\dfrac{\Gamma(\alpha_3^\epsilon+\alpha_4^\epsilon)}{\Gamma(\alpha_{3}^{\epsilon})\Gamma(\alpha_{4}^{\epsilon})}\right\rvert\\
    &\hspace{4cm}\cdot\left\lvert \int_{\delta z/\left\lvert z\right\rvert }^{z}\zeta^{\alpha_3^\epsilon-1}(\zeta-z_{4,\epsilon})^{\alpha_4^{\epsilon}-1}(\zeta-1)^{\alpha_1^{\epsilon}-1}d\zeta\right\rvert.
  \end{aligned}
\end{equation}
By applying a similar discussion in the proof of Theorem \ref{k4corr1}, we also have the positive real number $M_{5,\epsilon}$ which depends on $\epsilon$ such that
\begin{align*}
    M_{5,\epsilon}&\geq\left\lvert x_3^\epsilon-p_1+e^{\pi(2-\alpha_{4}^{\epsilon}-\alpha_{1}^{\epsilon})}\dfrac{x_{4}-x_{3}}{_{2}F_{1}(\alpha_{3}^{\epsilon},1-\alpha_{1}^{\epsilon},1;z_{4,\epsilon})}\dfrac{\Gamma(\alpha_3^\epsilon+\alpha_4^\epsilon)}{\Gamma(\alpha_{3}^{\epsilon})\Gamma(\alpha_{4}^{\epsilon})}\right.\\
    &\hspace{4cm}\left.\cdot\int_{0}^{\delta z/\left\lvert z\right\rvert}\zeta^{\alpha_3^\epsilon-1}(\zeta-z_{4,\epsilon})^{\alpha_4^{\epsilon}-1}(\zeta-1)^{\alpha_1^{\epsilon}-1}d\zeta\right\rvert
\end{align*}
and $M_{5,\epsilon}\to0$ as $\epsilon\to+0$. We also have 
\begin{align*}
  &\left\lvert \int_{\delta z/\left\lvert z\right\rvert }^{z}\zeta^{\alpha_3^\epsilon-1}(\zeta-z_{4,\epsilon})^{\alpha_4^{\epsilon}-1}(\zeta-1)^{\alpha_1^{\epsilon}-1}d\zeta\right\rvert\\
  &=\left\lvert \int_{\delta}^{\left\lvert z\right\rvert}\left(\dfrac{z}{\left\lvert z\right\rvert}t\right)^{\alpha_3^\epsilon-1}\left(\dfrac{z}{\left\lvert z\right\rvert}t-z_{4,\epsilon}\right)^{\alpha_4^{\epsilon}-1}\left(\dfrac{z}{\left\lvert z\right\rvert}t-1\right)^{\alpha_1^{\epsilon}-1}\dfrac{z}{\left\lvert z\right\rvert}\,dt\right\rvert\\
  &\leq\delta^{\alpha_3^\epsilon-1}\delta^{\alpha_4^\epsilon-1}\delta^{\alpha_1^\epsilon-1}(\left\lvert z\right\rvert-\delta)\leq\delta^{\alpha_3^\epsilon-1}\delta^{\alpha_4^\epsilon-1}\delta^{\alpha_1^\epsilon-1}(1-\delta).
\end{align*}
Since 
\[
  \lim_{\epsilon\to+0}\alpha_3^\epsilon=1,\ \lim_{\epsilon\to+0}\alpha_4^\epsilon=0
\]
holds by Corollary \ref{angleslim}, we obtain
\begin{equation}
  \label{eq5.1-2}
  M_{6,\epsilon}\geq\max_{z\in D_\epsilon(\delta)'}\left\lvert w_\epsilon(z)-p_1\right\rvert,\ \lim_{\epsilon\to+0}M_{6,\epsilon}=0,
\end{equation}
where we denote
\[
  M_{6,\epsilon}\coloneqq M_{5,\epsilon}+\delta^{\alpha_3^\epsilon+\alpha_4^\epsilon+\alpha_1^\epsilon-3}(1-\delta)\left\lvert \dfrac{x_{4}-x_{3}}{_{2}F_{1}(\alpha_{3}^{\epsilon},1-\alpha_{1}^{\epsilon},1;z_{4,\epsilon})}\dfrac{\Gamma(\alpha_3^\epsilon+\alpha_4^\epsilon)}{\Gamma(\alpha_{3}^{\epsilon})\Gamma(\alpha_{4}^{\epsilon})}\right\rvert.
\]
From \eqref{eq5.1-1} and \eqref{eq5.1-2}, we get
\[
  \max_{z\in D_\epsilon(\delta)}\left\lvert w_\epsilon(z)-p_1\right\rvert\leq\max\{M_{1,\epsilon},M_{2,\epsilon},M_{3,\epsilon},M_{4,\epsilon},M_{6,\epsilon}\}
\]
and \eqref{eq:k4p0-2} holds.

\bibliographystyle{plain} 

\end{document}